\providecommand{\blue}[1]{\color{black}{#1}\color{black}\hspace{0pt}}
\newtheorem{theorem}{Theorem}
\newtheorem{corollary}[theorem]{Corollary}
\newtheorem{proposition}[theorem]{Proposition}
\newtheorem{definition}[theorem]{Definition}
\newtheorem{example}[theorem]{Example}
\newtheorem{remark}[theorem]{Remark}
\newtheorem{problem}[theorem]{Problem}
\newenvironment{proof}{{\it Proof :~}}{\hfill$\blacksquare$\\}
\def\Tmin{T_{\rm min}}
\def\Tmax{T_{\rm max}}
\DeclareMathOperator*{\col}{col}
\DeclareMathOperator*{\diag}{diag}
\DeclareMathOperator{\eps}{\varepsilon}
\DeclareMathOperator{\Deltabc}{\boldsymbol{\Delta_c}}
\DeclareMathOperator{\Deltabd}{\boldsymbol{\Delta_d}}
\begin{document}

\title{Stability and $L_1/\ell_1$-to-$L_1/\ell_1$ performance analysis of uncertain impulsive linear positive systems with applications to the interval observation of impulsive and switched systems with constant delays}

\author{Corentin Briat\thanks{corentin@briat.info; www.briat.info}}

\maketitle

\begin{abstract}
Solutions to the interval observation problem for delayed impulsive and switched systems with $L_1$-performance are provided. The approach is based on first obtaining stability and  $L_1/\ell_1$-to-$L_1/\ell_1$ performance analysis conditions for uncertain linear positive impulsive systems in linear fractional form with norm-bounded uncertainties using a scaled small-gain argument involving time-varying $D$-scalings. Both range and minimum dwell-time conditions are formulated -- the case of constant and maximum dwell-times can be directly obtained as corollaries. The conditions are stated as timer/clock-dependent conditions taking the form of infinite-dimensional linear programs that can be relaxed into finite-dimensional ones using polynomial optimization techniques. It is notably shown that under certain conditions, the scalings can be eliminated from the stability conditions to yield equivalent stability conditions on the so-called \emph{worst-case system}, which is obtained by replacing the uncertainties by the identity matrix. These conditions are then applied to the special case of linear positive systems with delays, where the delays are considered as uncertainties, similarly to as in \cite{Briat:16b}. As before, under certain conditions, the scalings can be eliminated from the conditions to obtain conditions on the worst-case system, coinciding here with the zero-delay system -- a result that is consistent with all the existing ones in the literature on linear positive systems with delays. Finally, the case of switched systems with delays is considered. The approach also encompasses standard continuous-time and discrete-time systems, possibly with delays and the results are flexible enough to be extended to cope with multiple delays, time-varying delays, distributed/neutral delays and any other types of uncertain systems that can be represented as a feedback interconnection of a known system with an uncertainty.

\noindent\textit{Keywords: Hybrid (positive) systems; interval observation; uncertain and delay systems; timer/clock-dependent Lyapunov conditions; sum of squares programming}
\end{abstract}

\section{Introduction}

The interval observation problem amounts to finding a pair of observers to estimate an upper bound and a lower bound on the state of a given system. Interval observers have been first proposed in the context of state estimation in biological processes in  \cite{Gouze:00}. Over the past recent years, this problem witnessed an increase in its popularity and various methodologies for their design in different setups have been proposed. To cite a few, those observers have been obtained for systems with inputs  \cite{Mazenc:11,Briat:15g}, linear systems \cite{Mazenc:12,Combastel:13,Cacace:15}, time-varying systems  \cite{Thabet:14}, delay systems  \cite{Efimov:13c,Briat:15g}, impulsive systems \cite{Degue:16nolcos}, uncertain/LPV systems  \cite{Efimov:13b,Chebotarev:15,Bolajraf:15}, discrete-time systems  \cite{Mazenc:13,Briat:15g}, systems with samplings  \cite{Mazenc:14b,Efimov:16}, impulsive systems  \cite{Degue:16nolcos,Briat:17ifacObs,Briat:18:IntImp}, switched systems \cite{Rabehi:17,Ethabet:17,Briat:18:IntImp} and Markovian jump systems  \cite{Briat:18:IntMarkov}.

An interesting feature of the underlying theory behind the design of interval observers lies in the fact that the overall aim is to design the observers in such a way that the errors dynamics are governed by positive systems. In this regard, the tools from the, now very rich, positive systems theory \cite{Farina:00} can be applied to the design of interval observers. In particular, the design of structured state-feedback controllers is convex in this setup  \cite{Aitrami:07,Briat:11h}, the $L_p$-gains for $p=1,2,\infty$ of linear positive systems can be exactly computed by solving linear or semidefinite programs  \cite{Ebihara:11,Briat:11g,Briat:11h}, optimal state-feedback controllers and observers gains enjoy an interesting invariance property with respect to the input and output matrices of a linear positive system, respectively  \cite{Ebihara:12,Briat:15g}. Linear positive systems with discrete-delays are also stable provided that their zero-delay counterparts are also stable; see e.g. \cite{Haddad:04,Zhu:15,Ebihara:17,Briat:11h,AitRami:09,Shen:14,Shen:15,Shen:15b}. In particular, all those exact results have been shown to be consequences of small-gain arguments using various $L_p$-gains in  \cite{Briat:16b} by exploiting the robust analysis results reported in \cite{Briat:11g,Ebihara:11,Colombino:15}. Some extensions have also been provided, notably pertaining on the analysis of linear positive systems with time-varying delays.

\blue{Before delving into the objective and the contributions of this paper, it seems important to motivate the consideration of uncertain linear impulsive systems. Impulsive systems are, in fact, a powerful class of hybrid systems that can be used to model a wide variety of real-world processes such as systems with impacts \cite{Goebel:12} (e.g. bouncing ball, robots with impact physical constraints such as walking robots, etc.), switched systems \cite{Goebel:12,Briat:13d,Briat:15i,Briat:16c} (which can be used to model, among others, networked control systems), sampled-data systems \cite{Sivashankar:94,Naghshtabrizi:08,Goebel:12,Briat:13d,Briat:15i}, LPV systems \cite{Wirth:05,Briat:17ifacLPV,Briat:17AutomaticaLPV}. Stochastic versions of impulsive systems \cite{Teel:14} can be used to model stochastic sampled-data systems \cite{Briat:15i}, Markov jump linear systems (which find applications in networked control systems and fault detection problems \cite{Sauter:13}) and many other types of systems. Their importance and versatility clearly motivates their consideration in this paper.}

\blue{In the previous papers from the same author \cite{Briat:17ifacObs,Briat:18:IntImp}, the interval observation problem for impulsive systems without and with performance constraints was considered. The objective of this paper is, therefore, to extend the results to the case of uncertain/delay impulsive/switched systems.  The approach developed in  \cite{Briat:16c} by the same author upon which the results in \cite{Briat:17ifacObs,Briat:18:IntImp} are based on is not directly applicable to time-delay systems. However, the approach is fully generalizable to uncertain systems written in linear fractional form (see e.g.  \cite{Zhou:96,Briat:book1}) whose stability analysis is amenable to problems that can be solved using input/output methods. It is worth mentioning here that input/output approaches provide a formidable tool for the analysis of complex systems; see e.g. \cite{Willems:72,Megretski:93,RantzerMegretski:97,Iwasaki:98a,Scherer:01,Kao:07,Cantoni:12,Scherer:12}. They have been shown to be often exact in the context of linear positive systems; see e.g. \cite{Briat:11h,Colombino:15,Colombino:15b,Briat:16b}.}

The first part of the paper is devoted to the development of stability and $L_1/\ell_1$ performance analysis conditions for linear uncertain positive impulsive systems in linear fractional form. In this context, we consider uncertainties both in the continuous-time and the discrete-time dynamics of the system. To reduce the conservatism of the approach, we use $D$-scalings for dealing with those uncertainties, as is customary in the field of robust analysis and control; see e.g.  \cite{Packard:93,Packard:94a,Apkarian:95a}. Note that in the context of positive systems, it is not restrictive to consider diagonal scalings \cite{Colombino:15} as opposed to general linear systems. A particularity is that the continuous-time part of the stability conditions depends on the timer variable $\tau$ (see e.g.  \cite{Goebel:12,Briat:13d,Briat:14f,Briat:15i}) \blue{which suggests that the scalings should be made timer-dependent in order to reduce the conservatism of the approach.} However, this leads to an additional difficulty that needs to be carefully taken care of. This difficulty arises from the fact that the scalings need to satisfy a commutation property with the uncertainties. When the uncertainties are memoryless (such as parametric uncertainties), the timer-dependent scalings trivially commute with the uncertainties. However, when they are not memoryless (dynamic uncertainties or delays), some extra work is needed to characterize the conditions under which the commutativity property holds. Four results on the stability of a linear uncertain positive impulsive system are obtained. The two first ones consider a range-dwell time condition in which either the continuous-time scaling is constrained (e.g. some entries on the diagonal are identical with each other or the scaling is constant) or unconstrained (and can therefore be eliminated). The two last ones consider a minimum dwell-time condition and again the cases where the continuous-time scaling is constrained or unconstrained. In the unconstrained scaling case, the conditions reduce to a stability and performance analysis condition on the worst-case system where the uncertainties are replaced by the identity matrix. All the obtained conditions are stated as infinite-dimensional linear programs that can be solved using polynomial programming techniques such as sum of squares programming \cite{Parrilo:00,sosopt,sostools3} , linear programming with the use of the Handelman's theorem \cite{Handelman:88,Briat:11h,Briat:15f}, or via so-called the piecewise linear approach \cite{Allerhand:11,Allerhand:13,Xiang:15a}.

The second part of the paper focuses on the stability of linear positive impulsive systems with constant delays. It is important to stress that stability analysis conditions for this class of systems have been obtained in the literature using Lyapunov-Krasovskii functionals or Lyapunov-Razumikhin functions; see e.g. \cite{Wang:14,Liu:14,Li:17,Hu:17}. \blue{However, this is the first time that delays are both considered in the continuous-time and the discrete-time dynamics of the system and that the stability of such systems is established using an input/output approach.} The first issue that needs to be addressed is to establish under what condition the timer-dependent scalings commute with the constant delay operator. We prove that this is the case if and only if the dwell-time sequence exhibits a periodic behavior with a period related to the continuous-time delay value. Unfortunately, this condition is very restrictive and, as a consequence, the use of timer-dependent scalings is, in general, not possible when dealing with delays as uncertainties. In this regard, constant $D$-scalings are most likely to be considered in practice. Two results are given in the context of a range dwell-time condition. The first one considers a constant (i.e. timer-independent) scaling whereas the second considers a timer-dependent one, which can be fully eliminated from the conditions. The same set of results is obtained in the minimum dwell-time case. Interestingly, we exhibit in both cases an interesting fact  that is recurrent in the analysis of linear positive systems. We show that, in the timer-dependent scaling case, the stability conditions are satisfied if and only if the same conditions are satisfied for the "zero-delay system" (the system obtained by setting the delays to 0). In other words, the stability of the zero-delay system implies that of the system with delay, and this is true for any value of the delay. In the context of constant scalings, the stability of the zero-delay system is only necessary.

The third and fourth parts of the paper are devoted to the application of those results to the design of interval observers for linear impulsive systems with delays and linear switched systems with delays, respectively. This is, to the author's best knowledge, the first time that such conditions are obtained. In the case of impulsive systems, the cases of constrained and unconstrained scalings are considered in both the range and minimum dwell-time setting. In the case of switched systems, only the minimum dwell-time case is treated. The obtained design conditions are stated as infinite-dimensional linear programs that can be easily solved using polynomial programming techniques such as sum of squares programming. Explicit values for the gains of the observer can be extracted from the solution to the optimization problems.\\

\noindent\textbf{Notations.} The set of integers greater (or equal) to $n\in\mathbb{Z}$ is denoted by $\mathbb{Z}_{>n}$ ($\mathbb{Z}_{\ge n}$). The cones of positive and nonnegative vectors of dimension $n$ are denoted by $\mathbb{R}_{>0}^n$ and $\mathbb{R}_{\ge0}^n$, respectively. The set of diagonal matrices of dimension $n$ is denoted by $\mathbb{D}^n$ and the subset of those being positive definite is denoted by $\mathbb{D}_{\succ0}^n$. The $n$-dimensional vector of ones is denoted by $\mathds{1}_n$. The dimension will be often omitted as it is obvious from the context. For some scalars $x_1,\ldots,x_n$ or some vector $x=(x_1,\ldots,x_n)$, $\diag_{i=1}^n(x_i)$ and $\diag(x)$ both denote the matrix with the diagonal entries $x_1,\ldots,x_n$ whereas $\col_{i=1}^n(x_i)$ and $\col(x)$  both create a vector by vertically stacking them with $x_1$ on the top.\\

\noindent\textbf{Outline.} The paper is structured as follows. Section \ref{sec:prel} states some preliminary definitions and results. In Section \ref{sec:lups}, stability and performance analysis conditions for uncertain linear impulsive positive systems are obtained. These conditions are then specialized to the subcase of linear impulsive positive systems with delays in Section \ref{sec:lpsd}. Finally, conditions for the design of interval observers for linear impulsive time-delay systems and linear switched time-delay systems are formulated in Section \ref{sec:IIimp} and Section \ref{sec:IIsw}, respectively. Numerical examples are provided in the related sections.

\section{Preliminaries}\label{sec:prel}

We consider in this section the following \blue{class of uncertain timer-dependent} impulsive system
\begin{equation}\label{eq:mainsyst2}
\begin{array}{rcl}
  \begin{bmatrix}
    \dot{x}(t_k+\tau)\\
     z_{c,\Delta}(t_k+\tau)\\
     z_c(t_k+\tau)
  \end{bmatrix}&=&\begin{bmatrix}
    A(\tau) & G_c(\tau) & E_c(\tau)\\
    C_{c,\Delta} & H_{c,\Delta} & F_{c,\Delta}\\
    C_c & H_c & F_c
  \end{bmatrix}\begin{bmatrix}
    x(t_k+\tau)\\
    w_{c,\Delta}(t_k+\tau)\\
    w_c(t_k+\tau)
  \end{bmatrix},\tau\in(0,T_k],k\in\mathbb{Z}_{\ge0}\\
  \begin{bmatrix}
  x(t_k^+)\\
  z_{d,\Delta}(k)\\
  z_d(k)
  \end{bmatrix}&=&\begin{bmatrix}
    J & G_d & E_d\\
    C_{d,\Delta} & H_{d,\Delta} & F_{d,\Delta}\\
    C_d & H_d & F_d
  \end{bmatrix}\begin{bmatrix}
    x(t_k)\\
    w_{d,\Delta}(k)\\
    w_d(k)
  \end{bmatrix},\ k\in\mathbb{Z}_{\ge 1}\\
  w_{c,\Delta}(t_k+\tau)&=&(\Delta_cz_{c,\Delta})(t_k+\tau)\\
  w_{d,\Delta}(k)&=&(\Delta_dz_{d,\Delta})(k)\\
  x(0)&=&x(0^+)=x_0\\
\end{array}
\end{equation}
where $x,x_0\in\mathbb{R}^n$, $w_c\in\mathbb{R}^{p_c}$, $w_d\in\mathbb{R}^{p_d}$, $y_c\in\mathbb{R}^{q_c}$ and $y_d\in\mathbb{R}^{q_d}$ are the state of the system, the initial condition, the continuous-time exogenous input, the discrete-time exogenous input, the continuous-time performance output and the discrete-time performance output, respectively. The pair of signals $z_{c,\Delta},w_{c,\Delta}\in\mathbb{R}^{n_{c,\Delta}}$ and $z_{d,\Delta},w_{d,\Delta}\in\mathbb{R}^{n_{d,\Delta}}$ are the uncertainty channels and the operators $\Delta_c$ and $\Delta_d$ are bounded operators (more details will be given later). Above, $x(t_k^+):=\lim_{s\downarrow t_k}x(s)$ and the matrix-valued functions $A(\tau)\in\mathbb{R}^{n\times n}$ and $E(\tau)\in\mathbb{R}^p$ are continuous. The sequence of impulse times $\{t_k\}_{k\ge1}$ is assumed to verify the properties: (a) $T_k:=t_{k+1}-t_k>0$ for all $k\in\mathbb{Z}_{\ge0}$ (no jump at $t_0=0$) and (b) $t_k\to\infty$ as $k\to\infty$. When all the above properties hold, the solution of the system \eqref{eq:mainsyst2} exists for all times.

We have the following result regarding the state positivity of the impulsive system \eqref{eq:mainsyst2}.
\begin{proposition}
The following statements are equivalent:
\begin{enumerate}[(a)]
  \item The system \eqref{eq:mainsyst2} is internally positive, i.e. for any $x_0\ge0$, $w_c(t),w_{c,\Delta}(t)\ge0$ and $w_d(k),w_{d,\Delta}(k)\ge0$, we have that $x(t),z_c(t),z_{c,\Delta}(t)\ge0$ for all $t\ge0$ and $z_d(k),z_{d,\Delta}(k)\ge0$ for all $k\in\mathbb{Z}_{\ge0}$.
  \item The matrix-valued function $A(\tau)$ is Metzler for all $\tau\ge0$, the matrix-valued functions $E_c(\tau)$ and $G_c(\tau)$ are nonnegative for all $\tau\ge0$ and the matrices $J,G_d,E_d,C_{c,\Delta},H_{c,\Delta},F_{c,\Delta},C_{d,\Delta},H_{d,\Delta},F_{d,\Delta},C_c,H_c,F_c,C_d,H_d,F_d$ are nonnegative.
\end{enumerate}
\end{proposition}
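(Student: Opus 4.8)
The plan is to prove the two implications separately, separating the instantaneous (algebraic) relations from the dynamic (flow) relations, since they call for different arguments.

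For the sufficiency direction $(b)\Rightarrow(a)$, I would first record that a continuous Metzler-valued $A(\tau)$ generates a nonnegative state-transition operator. On each compact timer interval one may write $A(\tau)=-\lambda I+M(\tau)$ with $M(\tau)\ge0$ entrywise for $\lambda$ large enough (possible since the off-diagonal entries of $A$ are nonnegative and $A$ is continuous, hence its diagonal is bounded below); the evolution family then factors as $\Phi_A(\tau,s)=e^{-\lambda(\tau-s)}\Phi_M(\tau,s)$, and $\Phi_M\ge0$ because every term of its Peano--Baker series is a product/integral of nonnegative matrices. On each flow interval, the variation-of-constants formula $x(t_k+\tau)=\Phi_A(\tau,0)x(t_k^+)+\int_0^\tau\Phi_A(\tau,s)\big(G_c(s)w_{c,\Delta}(s)+E_c(s)w_c(s)\big)\,ds$ then keeps $x$ nonnegative whenever $x(t_k^+)\ge0$ and the inputs are nonnegative, since $\Phi_A$, $G_c$, $E_c$ are all nonnegative. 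The continuous outputs $z_{c,\Delta},z_c$, the jump map for $x(t_k^+)$, and the discrete outputs $z_{d,\Delta},z_d$ are nonnegative combinations of nonnegative quantities because the remaining matrices are nonnegative, and an induction over $k$ starting from $x_0\ge0$ closes this direction.

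For the necessity direction $(a)\Rightarrow(b)$, I would use extremal test signals. The instantaneous relations are immediate: feeding the canonical basis vectors $e_i$ one at a time into $x$, $w_{c,\Delta}$, $w_c$ (and their discrete analogues), while zeroing the remaining variables, forces each column of $C_{c,\Delta},H_{c,\Delta},F_{c,\Delta},C_c,H_c,F_c$ (resp. $J,G_d,E_d,C_{d,\Delta},H_{d,\Delta},F_{d,\Delta},C_d,H_d,F_d$) to be nonnegative, as the associated output or jump value must be nonnegative. For $E_c(\tau),G_c(\tau)\ge0$ I would start a flow interval at $x=0$ with all inputs zero except a single nonnegative component of $w_c$ (resp. $w_{c,\Delta}$) switched on; since the state sits at the boundary point $0$, nonnegativity of $x$ immediately afterwards forces $\dot x\ge0$ there, i.e. the relevant column of $E_c(\tau)$ (resp. $G_c(\tau)$) to be nonnegative.

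The main obstacle is the necessity of the Metzler property of $A(\tau)$ at \emph{every} $\tau$, not merely at $\tau=0$. The clean sub-tangentiality (Nagumo) argument --- take zero inputs and $x(t_k^+)=e_j$, so that the $i$-th component ($i\ne j$) starts at $0$ and its nonnegativity forces $\dot x_i=A_{ij}(0)\ge0$ --- directly controls $A$ only at the instant a flow begins. Reaching an arbitrary timer value $\tau^\ast>0$ requires being able to sit at a boundary state of the nonnegative orthant at time $\tau^\ast$, which the flow (always re-initialised at $\tau=0$) does not by itself guarantee; indeed nonnegativity of $\Phi_A(\tau,0)$ alone does not force $A$ to be Metzler pointwise. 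I would resolve this by reading internal positivity in the standard robust sense for timer-dependent systems, namely that the continuous-time subsystem be a positive linear time-varying system over all timer initialisations, so that the sub-tangentiality condition applies at every $\tau$ and yields $A_{ij}(\tau)\ge0$ for all $i\ne j$; continuity of $A$ then promotes the condition from almost every $\tau$ to every $\tau\ge0$.
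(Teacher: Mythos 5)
Your proposal is, in substance, a fully worked-out version of what the paper merely cites: the paper's entire proof is one sentence appealing to the standard positivity characterizations of continuous-time and discrete-time linear systems (Farina--Rinaldi), combined across flow and jump, which is exactly your Peano--Baker/variation-of-constants argument for sufficiency and your extremal-input argument for necessity, glued together by induction over the impulse index. Where you genuinely go beyond the paper is in spotting that the flow is always re-initialised at timer value $\tau=0$, so the literal reading of statement (a) only forces $\Phi_A(\tau,0)\ge0$, which does \emph{not} imply that $A(\tau)$ is Metzler pointwise: for instance, with $G_c=E_c=0$, the transition matrix $\Phi_A(\tau,0)=\left[\begin{smallmatrix}1 & \tau e^{-\tau}\\ 0 & 1\end{smallmatrix}\right]$ is nonnegative for all $\tau\ge0$ while its generator $A(\tau)=\left[\begin{smallmatrix}0 & (1-\tau)e^{-\tau}\\ 0 & 0\end{smallmatrix}\right]$ fails to be Metzler for $\tau>1$, so the system satisfies (a) literally but violates (b). The paper's citation silently assumes the time-varying reading of positivity (orthant invariance from every initial timer value, not just $\tau=0$), and your explicit reinterpretation is the correct -- indeed the only -- way to make the stated equivalence true.

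One smaller point you should patch: the same reachability obstruction appears, in milder form, in your necessity argument for the discrete-part matrices that multiply $x(t_k)$, namely $J$, $C_{d,\Delta}$ and $C_d$. At a jump index $k\ge1$ the state $x(t_k)$ is not free -- it lies in the image of the nonnegative orthant under the flow -- so you cannot ``feed $e_i$ into $x$'' exactly as you can at $t=0$. The fix is a limiting argument: with zero inputs, $x(t_1)=\Phi_A(T_0,0)x_0$, so positivity of the jump gives $J\,\Phi_A(T_0,0)\,x_0\ge0$ for every $x_0\ge0$ and every admissible dwell time $T_0>0$; letting $T_0\downarrow0$, using $\Phi_A(T_0,0)\to I$ and continuity, yields $J\ge0$, and identically for $C_{d,\Delta}$ and $C_d$. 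The matrices multiplying $w_{d,\Delta}(k)$ and $w_d(k)$ are unproblematic since those signals are free, and the constant continuous-time output matrices can be tested at $\tau=0$ where $x(0)=x_0$ is arbitrary, just as you do.
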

\blue{\begin{proof}
  The proof is based on the combination of the positivity conditions for both continuous-time and discrete-time systems; see e.g. \cite{Farina:00}.
\end{proof}}

We recall now the concept of hybrid $L_1/\ell_1$-gain introduced in \cite{Briat:18:IntImp}:
\begin{definition}
We say that the system \eqref{eq:mainsyst2} has a hybrid $L_1/\ell_1$-gain of at most $\gamma$ if for all $w_c\in L_1$ and $w_d\in\ell_1$, we have that
  \begin{equation}
    ||z_c||_{L_1}+||z_d||_{\ell_1}<\gamma(||w_c||_{L_1}+||w_d||_{\ell_1})+v(||x_0||)
  \end{equation}
  for some increasing function $v$ verifying $v(0)=0$ and $v(s)\to\infty$ as $s\to\infty$.
\end{definition}

We now define the sets for our uncertainties:
\begin{definition}
  The uncertain operators $\Delta_c$ and $\Delta_d$ are assumed to belong to the sets
  \begin{equation}
    \Delta_c\in\Deltabc:=\left\{\Delta:L_1\mapsto L_1:||\Delta||_{L_1-L_1}\le1\right\}
  \end{equation}
  where $||\Delta||_{L_1-L_1}$ denotes the $L_1$-gain of the operator $\Delta$, and
  \begin{equation}
    \Delta_d\in\Deltabd:=\left\{\Delta:\ell_1\mapsto \ell_1:||\Delta||_{\ell_1-\ell_1}\le1\right\}
  \end{equation}
  where $||\Delta||_{\ell_1-\ell_1}$ denotes the $\ell_1$-gain of the operator $\Delta$.
\end{definition}
Note that we do not restrict the operators to map positive inputs to positive outputs since what matters here is that the operators see a positive system, that is, that the maps $w_{c,\Delta}\mapsto z_{c,\Delta}$ and $w_{d,\Delta}\mapsto z_{d,\Delta}$ be positive. In such a case, and as pointed out in \cite{Colombino:15}, the worst case operator in the above set is necessarily going to be a positive one.

As customary in the input/output setting (see e.g. \cite{Packard:93}), we recall now the concept of $D$-scalings:
\begin{definition}
We define the set of timer-dependent continuous-time $D$-scalings as
\begin{equation}
    \mathcal{S}_c:=\left\{S:[0,T]\mapsto\mathbb{D}_{\succ0}^{n_{\Delta,c}}|S\circ \Delta_c=\Delta_c\circ S\right\}
  \end{equation}
  for some time $T>0$ and where $\circ$ is the composition operator. The set of discrete-time $D$-scalings is defined as
  \begin{equation}
    \mathcal{S}_c:=\left\{S\in\mathbb{D}_{\succ0}^{n_{\Delta,d}}|S\circ \Delta_d=\Delta_d\circ S\right\}.
  \end{equation}
\end{definition}

\begin{example}
  If, for instance, the operator $\Delta_c=M$ is the multiplication operator taking the form $M=\diag(\theta_1I_{n_1},\ldots,\theta_KI_{n_K})$ where $K$ is the number of distinct parameters and $n_i$ is the occurrence of the parameter $i$ in the diagonal matrix $M$. In this case, the set of scalings is simply defined as all the mappings $S:[0,T]\mapsto\mathbb{D}_{\succ0}^{n_1+\ldots+n_K}$.
\end{example}


\section{Stability and performance analysis of linear uncertain positive systems}\label{sec:lups}

\blue{The objective of this section is to provide stability and performance criteria for systems of the form \eqref{eq:mainsyst2}. Those criteria are novel and extend previously obtained ones on the stability analysis \cite{Briat:16c} and the $L_1/\ell_1$-performance \cite{Briat:18:IntImp} to the case of uncertain systems in LFT-form.} First, conditions for the stability and the hybrid $L_1/\ell_1$ performance analysis for the system \eqref{eq:mainsyst2} are obtained under a range dwell-time constraint. Then, an analogous result is obtained with the difference that a minimum dwell-time constraint is considered. The constant and maximum dwell-time cases can be easily obtained as corollaries or simple adaptations of those results. Finally, results in the case of unconstrained scalings are provided. In such a case, the scalings can be fully eliminated to obtain a reduced set of conditions that are, as it turns out, identical to conditions that would have been obtained by replacing the uncertainties by identity matrices, illustrating then the fact that the worst-case operators coincide with the identity matrix.

\subsection{Range dwell-time stability and performance condition}

We first address the range dwell-time case, that is, the case where the dwell-time values $T_k$, $k\in\mathbb{Z}_{\ge0}$, belong to the interval $[\Tmin,\Tmax]$ where $0\le \Tmin\blue{\le }\Tmax<\infty$. Stability and performance conditions are stated in the following result:
\begin{theorem}[Range dwell-time]\label{th:rangeDT:general}
Assume that the system \eqref{eq:mainsyst2} is internally positive and that there exist a differentiable vector-valued function $\zeta:[0,\Tmax ]\mapsto\mathbb{R}^n$, $\zeta(0)>0$, a vector-valued function $\mu_c:[0,\Tmax ]\mapsto\mathbb{R}^{n_{c,\Delta}}$, a vector $\mu_d\in\mathbb{R}^{n_{d,\Delta}}$ and scalars $\epsilon,\gamma>0$ such that the conditions $\diag(\mu_c)\in\mathcal{S}_c$, $\diag(\mu_d)\in\mathcal{S}_d$,
\begin{equation}\label{eq:RangeDT1}
    \begin{bmatrix}
      \dot{\zeta}(\tau)\\
      0\\
      0
    \end{bmatrix}^T+\begin{bmatrix}
      \zeta(\tau)\\
      \mu_c(\tau)\\
      \mathds{1}
    \end{bmatrix}^T\begin{bmatrix}
      A(\tau) & G_c(\tau) & E_c(\tau)\\
      C_{c,\Delta} & H_{c,\Delta}-I & F_{c,\Delta}\\
      C_c & H_c & F_c
    \end{bmatrix}\le\begin{bmatrix}
    0\\0\\ \gamma\mathds{1}
    \end{bmatrix}^T
  \end{equation}
  and
    \begin{equation}\label{eq:RangeDT2}
    \begin{bmatrix}
      -\zeta(\theta)\\
      0\\
      0
    \end{bmatrix}^T+\begin{bmatrix}
      \zeta(0)\\
      \mu_d\\
      \mathds{1}
    \end{bmatrix}^T\begin{bmatrix}
      J & G_d & E_d\\
      C_{d,\Delta} & H_{d,\Delta}-I & F_{d,\Delta}\\
      C_d & H_d & F_d
    \end{bmatrix}\le\begin{bmatrix}
      -\epsilon\mathds{1}^T\\0\\ \gamma\mathds{1}
    \end{bmatrix}^T
  \end{equation}
hold for all $\tau\in[0,\Tmax ]$ and all $\theta\in[\Tmin ,\Tmax ]$. Then, the system \eqref{eq:mainsyst2} is asymptotically stable for all $\Delta_c\in\Deltabc$ and $\Delta_d\in\Deltabd$ under the range dwell-time condition $[\Tmin ,\Tmax ]$. Moreover, the mapping $(w_c,w_d)\mapsto(z_c,z_d)$ has a hybrid $L_1/\ell_1$-gain of at most $\gamma$.
\end{theorem}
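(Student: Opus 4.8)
The plan is to run a scaled small-gain (dissipativity) argument built on the linear copositive, timer-dependent storage function $V(x,\tau):=\zeta(\tau)^\top x$. Since the plant is internally positive and, as recalled after the definition of $\Deltabc,\Deltabd$ (see also \cite{Colombino:15}), the worst-case operators in these sets are positive, it suffices to establish stability and the gain bound along the positive trajectories generated by nonnegative inputs $w_c,w_{c,\Delta}\ge0$, $w_d,w_{d,\Delta}\ge0$ and $x_0\ge0$; internal positivity then guarantees $x(t)\ge0$, $z_c,z_{c,\Delta}\ge0$ and $z_d,z_{d,\Delta}\ge0$, so that every vector appearing below is nonnegative. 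Throughout, $\tau=t-t_k$ is the timer on the flow interval $(t_k,t_{k+1}]$, whose length $T_k$ lies in $[\Tmin,\Tmax]$.

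First I would extract the flow inequality. Right-multiplying \eqref{eq:RangeDT1} by the nonnegative vector $\col(x,w_{c,\Delta},w_c)$ and regrouping the three block-columns, the first block reconstructs $\dot V=\dot\zeta(\tau)^\top x+\zeta(\tau)^\top\dot x$, the scaling row assembles into $\mu_c(\tau)^\top(z_{c,\Delta}-w_{c,\Delta})$ (using $(H_{c,\Delta}-I)w_{c,\Delta}=z_{c,\Delta}-w_{c,\Delta}$ modulo the other channels), and the last row yields $\mathds{1}^\top z_c$; hence
\begin{equation*}
\dot V\le -\mathds{1}^\top z_c+\gamma\mathds{1}^\top w_c-\mu_c(\tau)^\top\big(z_{c,\Delta}-w_{c,\Delta}\big).
\end{equation*}
Analogously, right-multiplying \eqref{eq:RangeDT2} by $\col(x(t_k),w_{d,\Delta}(k),w_d(k))$ with the choice $\theta=T_{k-1}\in[\Tmin,\Tmax]$ (the length of the interval terminating at $t_k$) turns the first block into the jump increment $V(x(t_k^+),0)-V(x(t_k),T_{k-1})$, giving
\begin{equation*}
V(x(t_k^+),0)-V(x(t_k),T_{k-1})\le -\epsilon\mathds{1}^\top x(t_k)-\mathds{1}^\top z_d(k)+\gamma\mathds{1}^\top w_d(k)-\mu_d^\top\big(z_{d,\Delta}(k)-w_{d,\Delta}(k)\big).
\end{equation*}
As a by-product, reading the first block-column of \eqref{eq:RangeDT2} on its own gives $\zeta(\theta)^\top\ge\zeta(0)^\top J+\mu_d^\top C_{d,\Delta}+\mathds{1}^\top C_d+\epsilon\mathds{1}^\top\ge\epsilon\mathds{1}^\top>0$ for every $\theta\in[\Tmin,\Tmax]$, all remaining terms being nonnegative; thus $V$ is a genuine positive-definite certificate at every admissible dwell-time, which is what I will need for the final sign argument.

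The crux — and the step I expect to be the main obstacle — is to show that the scaling contributions are harmless once integrated and summed. Using $\diag(\mu_c)\in\mathcal{S}_c$, i.e. the commutation $\diag(\mu_c)\circ\Delta_c=\Delta_c\circ\diag(\mu_c)$, together with $||\Delta_c||_{L_1-L_1}\le1$, the worst-case positivity gives
\begin{equation*}
\int_0^{\infty}\mu_c(\tau)^\top w_{c,\Delta}\,dt=||\diag(\mu_c)\,w_{c,\Delta}||_{L_1}=||\Delta_c\,\diag(\mu_c)\,z_{c,\Delta}||_{L_1}\le||\diag(\mu_c)\,z_{c,\Delta}||_{L_1}=\int_0^{\infty}\mu_c(\tau)^\top z_{c,\Delta}\,dt,
\end{equation*}
so that $\int\mu_c^\top(z_{c,\Delta}-w_{c,\Delta})\ge0$; the identical manipulation with $\diag(\mu_d)\in\mathcal{S}_d$ and $||\Delta_d||_{\ell_1-\ell_1}\le1$ yields $\sum_k\mu_d^\top(z_{d,\Delta}(k)-w_{d,\Delta}(k))\ge0$. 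The delicate point is that $\diag(\mu_c)$ is \emph{timer-dependent}, so it is precisely the commutation hypothesis that legitimizes pulling the scaling through $\Delta_c$; this is the property flagged in the introduction as restrictive for memoryful uncertainties (delays), but here it is simply assumed.

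It then remains to telescope. Summing the flow inequality over the intervals and the jump inequality over the impulse instants up to $t_N$, the storage terms collapse to $V(x(t_N),T_{N-1})-\zeta(0)^\top x_0$. Invoking $V(x(t_N),T_{N-1})\ge0$ (from $\zeta(T_{N-1})\ge\epsilon\mathds{1}>0$ and $x\ge0$), discarding the nonnegative scaling sums just established, and letting $N\to\infty$, I obtain
\begin{equation*}
||z_c||_{L_1}+||z_d||_{\ell_1}+\epsilon\sum_{k\ge1}\mathds{1}^\top x(t_k)\le\gamma\big(||w_c||_{L_1}+||w_d||_{\ell_1}\big)+\zeta(0)^\top x_0,
\end{equation*}
which is the claimed hybrid $L_1/\ell_1$-gain with $v(||x_0||):=\zeta(0)^\top x_0$, the strict inequality of the definition being inherited from the $\epsilon$-margin. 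Setting $w_c\equiv0$, $w_d\equiv0$ gives $\sum_{k\ge1}\mathds{1}^\top x(t_k)\le\epsilon^{-1}\zeta(0)^\top x_0<\infty$, hence $x(t_k)\to0$; since each flow is generated by $A(\tau)$ continuous on the compact set $[0,\Tmax]$ with $T_k\le\Tmax$, the state on $(t_k,t_{k+1}]$ is bounded by a fixed multiple of $||x(t_k)||$, so $x(t)\to0$ and asymptotic stability follows for every $\Delta_c\in\Deltabc$, $\Delta_d\in\Deltabd$, the extension from the positive worst case to arbitrary admissible operators being a consequence of the monotonicity of the positive plant.
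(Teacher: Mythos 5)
Your proposal is correct and is built on the same machinery as the paper's own proof: the linear copositive storage function $V=\zeta(\tau)^\top x$, right-multiplication of the two linear-programming conditions by the nonnegative vectors $\col(x,w_{c,\Delta},w_c)$ and $\col(x(t_k),w_{d,\Delta}(k),w_d(k))$, integration over each flow interval, and telescoping across the jumps. The logical organization, however, differs in two ways that are worth recording. First, where the paper discards the scaling contributions by asserting that $\sum_{k}\int_0^{T_k}R_k^{c,\Delta}(\tau)\,d\tau\ge0$ and $\sum_k R_k^{d,\Delta}\ge0$ hold ``by definition,'' you actually derive these inequalities from the commutation hypothesis $\diag(\mu_c)\circ\Delta_c=\Delta_c\circ\diag(\mu_c)$ together with the unit $L_1$/$\ell_1$-gain bounds; this is the genuine content of the scaled small-gain step, and making it explicit is an improvement. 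Second, the paper's argument is circular on the stability claim: it sums the dissipation inequalities to infinity ``using the fact since the system is stable \ldots then $x$ goes to $0$,'' i.e.\ it invokes the property being proved, and it never returns to establish asymptotic stability. Your ordering repairs this: you first extract $\zeta(\theta)\ge\epsilon\mathds{1}$ for $\theta\in[\Tmin,\Tmax]$ from the first block-column of \eqref{eq:RangeDT2} (a fact the paper never records, and which is exactly what makes $V\ge0$ at the pre-jump instants), then telescope to obtain the gain bound together with the term $\epsilon\sum_k\mathds{1}^\top x(t_k)$, and finally deduce stability from the summability of that term; you also keep the honest offset $v(\|x_0\|)=\zeta(0)^\top x_0$ instead of assuming zero initial conditions as the paper does.

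Two spots in your write-up remain thin, though neither is handled any better by the paper. The nonnegativity of the scaling integrals was established over $[0,\infty)$, so over the finite horizon $[0,t_N]$ you should either keep those terms until after the limit $N\to\infty$ is taken, or invoke causality of $\Delta_c$ to justify the truncated inequality. More substantively, in the last step of the stability argument the state on a flow interval obeys $\dot x=A(\tau)x+G_c(\tau)w_{c,\Delta}$ with zero exogenous input, so it is \emph{not} bounded by a fixed multiple of $\|x(t_k)\|$ alone: the contribution of the uncertainty channel $w_{c,\Delta}=\Delta_c z_{c,\Delta}$ must also be shown to vanish, which requires an additional summability argument for $z_{c,\Delta}$ (or a well-posedness/boundedness assumption on the loop) that your proposal, like the paper, leaves implicit.
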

\begin{proof}
Let $S_c:=\diag(\mu_c)$ (i.e. $\mathds{1}^TS_c=\mu_c$) and $S_d:=\diag(\mu_d)$. 
Then, multiply from the right the conditions \eqref{eq:RangeDT1} and \eqref{eq:RangeDT2} by $\col(x(t_k+\tau),w_{c,\Delta}(t_k+\tau),w_c(t_k+\tau))$ and $\col(x(t_k),w_{d,\Delta}(k),w_d(k))$, respectively. Grouping the terms together yields
\begin{equation}
\begin{array}{rcl}
  \dot\zeta(\tau)^Tx(t_k+\tau)+\zeta(\tau)^T\dot{x}(t_k+\tau)-\mathds{1}^TS_c(\tau)w_{c,\Delta}(t_k+\tau)+\mathds{1}^TS_c(\tau)z_{c,\Delta}(t_k+\tau)\\
  -\gamma\mathds{1}^Tw_c(t_k+\tau)+\mathds{1}^Tz_c(t_k+\tau)\le0
\end{array}
\end{equation}
Letting $V_k(\tau,x)=\lambda(\tau)^Tx(t_k+\tau)$ and integrating the above inequality from 0 to $T_k$ yields
\begin{equation}
\begin{array}{rcl}
  V_k(T_k,x)-V_k(0,x)+\int_{0}^{T_k}\left[R_k^{c,\Delta}(\tau)+R_k^{c}(\tau)\right]d\tau\le0
\end{array}
\end{equation}
where $R_k^{c,\Delta}(\tau):=\mathds{1}^TS_c(\tau)z_{c,\Delta}(t_k+\tau)-\mathds{1}^TS_c(\tau)w_{c,\Delta}(t_k+\tau)$ and $R_k^{c}(\tau):=\mathds{1}^Tz_c(t_k+\tau)-\gamma\mathds{1}^Tw_c(t_k+\tau)$. Similar calculations for the condition \eqref{eq:RangeDT2} yield
\begin{equation}
  V_{k+1}(0,x)-V_{k}(T_k,x)+R_k^{d,\Delta}+R_k^{d}+\epsilon\mathds{1}^Tx(t_k)\le0
\end{equation}
where $R_k^{d,\Delta}(\tau):=\mathds{1}^TS_dz_{d,\Delta}(k)-\mathds{1}^TS_dw_{d,\Delta}(k)$ and $R_k^{d}:=\mathds{1}^Tz_d(k)-\gamma\mathds{1}^Tw_d(k)$. Combining those expressions yields
\begin{equation}
\begin{array}{rcl}
  V_{k+1}(0,x)-V_k(0,x)+\int_{0}^{T_k}\left[R_k^{c,\Delta}(\tau)+R_k^{c}(\tau)\right]d\tau+R_k^{d,\Delta}+R_k^{d}+\epsilon\mathds{1}^Tx(t_k)\le0.
\end{array}
\end{equation}
Summing over $k$ from 0 to $\infty$ and using the fact since the system is stable and the inputs are in $L_1/\ell_1$, then $x$ goes to 0 as $t\to\infty$. Hence, we have that
\begin{equation}
\begin{array}{rcl}
  -V_0(0,x)+\sum_{k=0}^\infty\left(\int_{0}^{T_k}\left[R_k^{c,\Delta}(\tau)+R_k^{c}(\tau)\right]d\tau+R_k^{d,\Delta}+R_k^{d}\right)+\tilde\epsilon\mathds{1}^Tx(t_k)\le0.
\end{array}
\end{equation}
where $\tilde\epsilon:=\sum_{k=0}^\infty\epsilon\mathds{1}^Tx(t_k)$. Using the fact that, by definition $\sum_{k=0}^\infty\int_{0}^{T_k}R_k^{c,\Delta}(\tau)d\tau\ge0$ and $\sum_{k=0}^\infty R_k^{d,\Delta}\ge0$, then this implies that
\begin{equation}
\begin{array}{rcl}
  -V_0(0,x)+\sum_{k=0}^\infty\left(\int_{0}^{T_k}R_k^{c}(\tau)d\tau+R_k^{d}\right)+\tilde\epsilon\mathds{1}^Tx(t_k)\le0.
\end{array}
\end{equation}
Assuming now zero initial conditions and considering the fact that $\epsilon>0$ can be chosen arbitrarily small, then we get that
\begin{equation}
\begin{array}{rcl}
  \int_{0}^{\infty}\left[\mathds{1}^Tz_c(s)-\gamma\mathds{1}^Tw_c(s)\right]d\tau+\sum_{k=0}^\infty\left[\mathds{1}^Tz_d(k)-\gamma\mathds{1}^Tw_d(k) \right]+\tilde\epsilon\mathds{1}^Tx(t_k)\le0.
\end{array}
\end{equation}
and, hence, that
\begin{equation}
\begin{array}{rcl}
  ||z_c||_{L_1}+||z_d||_{\ell_1}<\gamma\left(||w_c||_{L_1}+||w_d||_{\ell_1}\right).
\end{array}
\end{equation}
This proves the result.
\end{proof}

\subsection{Minimum dwell-time stability and performance condition}

We address now the minimum dwell-time case, that is, the case where the dwell-time values $T_k$, $k\in\mathbb{Z}_{\ge0}$, belong to the interval $[\bar T,\infty)$ where $0<\bar T$. Stability and performance conditions are stated in the following result:
\begin{theorem}[Minimum dwell-time]\label{th:minimumDT:general}
Let us assume here that the system \eqref{eq:mainsyst2} is internally positive and that the matrices of the system are such that they remain constant for all values $\tau\ge\bar T$. Assume further that there exist a differentiable vector-valued function $\zeta:[0,\bar T]\mapsto\mathbb{R}^n$, $\zeta(0)>0,\zeta(\bar T)>0$, a vector-valued function $\mu_c:[0,\bar T]\mapsto\mathbb{R}^{n_{c,\Delta}}$, a vector $\mu_d\in\mathbb{R}^{n_{d,\Delta}}$ and scalars $\epsilon,\gamma>0$ such that the conditions $\diag(\mu_c)\in\mathcal{S}_c$, $\diag(\mu_d)\in\mathcal{S}_d$,
\begin{equation}\label{eq:minDT1}
    \begin{bmatrix}
      \zeta(\bar T)\\
      \mu_c(\bar T)\\
      \mathds{1}
    \end{bmatrix}^T\begin{bmatrix}
      A(\bar T) & G_c(\bar T) & E_c(\bar T)\\
      C_{c,\Delta} & H_{c,\Delta}-I & F_{c,\Delta}\\
      C_c & H_c & F_c
    \end{bmatrix}\le\begin{bmatrix}
    -\epsilon\mathds{1}\\0\\ \gamma\mathds{1}
    \end{bmatrix}^T
  \end{equation}
\begin{equation}\label{eq:minDT2}
    \begin{bmatrix}
      \dot{\zeta}(\tau)\\
      0\\
      0
    \end{bmatrix}^T+\begin{bmatrix}
      \zeta(\tau)\\
      \mu_c(\tau)\\
      \mathds{1}
    \end{bmatrix}^T\begin{bmatrix}
      A(\tau) & G_c(\tau) & E_c(\tau)\\
      C_{c,\Delta} & H_{c,\Delta}-I & F_{c,\Delta}\\
      C_c & H_c & F_c
    \end{bmatrix}\le\begin{bmatrix}
    0\\0\\ \gamma\mathds{1}
    \end{bmatrix}^T
  \end{equation}
  and
    \begin{equation}\label{eq:minDT3}
    \begin{bmatrix}
      -\zeta(\theta)\\
      0\\
      0
    \end{bmatrix}^T+\begin{bmatrix}
      \zeta(0)\\
      \mu_d\\
      \mathds{1}
    \end{bmatrix}^T\begin{bmatrix}
      J & G_d & E_d\\
      C_{d,\Delta} & H_{d,\Delta}-I & F_{d,\Delta}\\
      C_d & H_d & F_d
    \end{bmatrix}\le\begin{bmatrix}
      -\epsilon\mathds{1}^T\\0\\ \gamma\mathds{1}
    \end{bmatrix}^T
  \end{equation}
hold for all $\tau\in[0,\bar T ]$. Then, the system \eqref{eq:mainsyst2} is asymptotically stable for all $\Delta_c\in\Deltabc$ and $\Delta_d\in\Deltabd$ under the minimum dwell-time condition $\bar T$. Moreover, the mapping $(w_c,w_d)\mapsto(z_c,z_d)$ has a hybrid $L_1/\ell_1$-gain of at most $\gamma$.
\end{theorem}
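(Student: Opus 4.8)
The plan is to follow the same dissipativity / scaled-small-gain scheme used in the proof of Theorem~\ref{th:rangeDT:general}, the only genuinely new ingredient being the treatment of the now-unbounded flow interval $(t_k,t_{k+1}]$, whose length $T_k\ge\bar T$ may be arbitrarily large. As before I set $S_c(\tau):=\diag(\mu_c(\tau))$ and $S_d:=\diag(\mu_d)$, so that $\mathds{1}^TS_c(\tau)=\mu_c(\tau)$ and $\mathds{1}^TS_d=\mu_d$, the memberships $\diag(\mu_c)\in\mathcal{S}_c$, $\diag(\mu_d)\in\mathcal{S}_d$ encoding the commutation of the scalings with $\Delta_c,\Delta_d$. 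Exploiting the standing assumption that the system matrices are constant for $\tau\ge\bar T$, I extend the clock-dependent weight by freezing it, $\zeta(\tau):=\zeta(\bar T)$ and hence $\dot\zeta(\tau)=0$ for $\tau\ge\bar T$, and work with the storage functional $V_k(\tau,x)=\zeta(\tau)^Tx(t_k+\tau)$ on $[0,\infty)$.

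Next I would split each flow interval into a \emph{transient} part $\tau\in[0,\bar T]$ and a \emph{steady} part $\tau\in[\bar T,T_k]$. Multiplying \eqref{eq:minDT2} on the right by $\col(x(t_k+\tau),w_{c,\Delta}(t_k+\tau),w_c(t_k+\tau))$ on the transient part reproduces verbatim the dissipation inequality $\tfrac{d}{d\tau}V_k(\tau)+R_k^{c,\Delta}(\tau)+R_k^{c}(\tau)\le0$ of the range-dwell-time proof, with the same definitions of $R_k^{c,\Delta}$ and $R_k^{c}$. On the steady part the frozen weight gives $\dot\zeta=0$, the matrices equal their values at $\bar T$, and multiplying \eqref{eq:minDT1} on the right by the same vector yields $\tfrac{d}{d\tau}V_k(\tau)+R_k^{c,\Delta}(\tau)+R_k^{c}(\tau)\le-\epsilon\mathds{1}^Tx(t_k+\tau)$; internal positivity forces $x(t_k+\tau)\ge0$, so the right-hand side is nonpositive and the steady phase is strictly dissipative. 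Integrating over $[0,T_k]$ therefore gives $V_k(T_k,x)-V_k(0,x)+\int_0^{T_k}[R_k^{c,\Delta}(\tau)+R_k^{c}(\tau)]\,d\tau\le0$ exactly as before, with an extra nonpositive $-\epsilon$ contribution on $[\bar T,T_k]$ to spare. Since $T_k\ge\bar T$, the weight just before the impulse is $\zeta(\bar T)$, so that $V_k(T_k,x)=\zeta(\bar T)^Tx(t_{k+1})$; taking $\theta=\bar T$ and multiplying \eqref{eq:minDT3} on the right by $\col(x(t_k),w_{d,\Delta}(k),w_d(k))$ produces the jump inequality $V_{k+1}(0,x)-V_k(T_k,x)+R_k^{d,\Delta}+R_k^{d}+\epsilon\mathds{1}^Tx(t_k)\le0$.

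From here the argument closes identically to the range case: adding the flow and jump inequalities and telescoping the sum over $k\ge0$ collapses the $V$-terms, the scaling memberships together with $||\Delta_c||_{L_1-L_1}\le1$ and $||\Delta_d||_{\ell_1-\ell_1}\le1$ give $\sum_k\int_0^{T_k}R_k^{c,\Delta}(\tau)\,d\tau\ge0$ and $\sum_k R_k^{d,\Delta}\ge0$, and the surviving $R_k^c,R_k^d$ terms deliver the hybrid $L_1/\ell_1$-gain bound $||z_c||_{L_1}+||z_d||_{\ell_1}<\gamma(||w_c||_{L_1}+||w_d||_{\ell_1})$ under zero initial conditions, while the accumulated $\epsilon$-terms, positive because $x\ge0$, furnish the strict decrease that yields asymptotic stability for every admissible dwell-time sequence. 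The step I expect to be the crux is the steady-phase estimate: one must verify that freezing $\zeta$ at $\zeta(\bar T)$ is consistent — this is precisely what the constant-matrices assumption buys — and that the absence of a $\dot\zeta$ term in \eqref{eq:minDT1} is legitimate, so that the strict $-\epsilon\mathds{1}^Tx$ margin, rather than mere dissipativity, governs arbitrarily long intervals and prevents $V_k$ from drifting upward before the next impulse.
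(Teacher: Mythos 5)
Your proposal is correct and follows exactly the route of the paper's own (one-line) proof: the paper simply invokes Theorem~\ref{th:rangeDT:general} after extending $\zeta$ and $\mu_c$ to be constant for $\tau\ge\bar T$, which is precisely your freezing construction, with conditions \eqref{eq:minDT2} and \eqref{eq:minDT1} governing the transient phase $[0,\bar T]$ and the steady phase $[\bar T,T_k]$ respectively, and \eqref{eq:minDT3} (with $\theta=\bar T$, legitimate since the frozen weight satisfies $\zeta(T_k)=\zeta(\bar T)$ for any $T_k\ge\bar T$) handling the jump. Your expanded treatment of the steady-phase dissipation with the $-\epsilon\mathds{1}^Tx$ margin is exactly the detail the paper leaves implicit, so there is nothing to add.
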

\begin{proof}
  The proof is based on Theorem \ref{th:rangeDT:general} where we have considered $\zeta$ and $\mu_c$ such that they remain constant for all values of $\tau\ge\bar T$.
\end{proof}

\subsection{The unconstrained scalings case}

It seems interesting here to discuss the case where the scalings are unconstrained. By "unconstrained", it is meant here that the set of continuous-time $D$-scalings coincides with the set of all maps from $[0,T]$ to $\mathbb{D}_{\succ0}^{n_{c,\Delta}}$ and the set of discrete-time $D$-scalings is simply the set $\mathbb{D}_{\succ0}^{n_{c,\Delta}}$. A necessary condition for this fact to hold is that $\Delta_c$ and $\Delta_d$ be diagonal. This is notably the case when parametric uncertainties or delay operators are considered. In this very interesting case, the scalings can be eliminated from the conditions to get equivalent ones characterizing stability of the uncertain system. To this aim, let us assume that the operators are any diagonal bounded operator with unit $L_1$- and $\ell_1$-gains admitting unconstrained scalings. It is interesting to note that the operators need not be restricted to be positive only as pointed out in \cite{Colombino:15} since all what matters is that the operators see a positive system; i.e. the maps $w_{c,\Delta}\mapsto z_{c,\Delta}$ and $w_{d,\Delta}\mapsto z_{d,\Delta}$ be positive. Moreover, in the same paper it is shown that the worst case operator in the unit-ball is the positive operator with unit gain. For example, the worst-case value for a scalar parameter in the closed unit-ball is one.

On the strength of the discussion above, we can state the following stability and performance result under a range dwell-time constraint:
\begin{theorem}[Range dwell-time]\label{th:rangeDT:free}
Assume that there exist a differentiable vector-valued function $\zeta:[0,\Tmax ]\mapsto\mathbb{R}^n$, $\zeta(0)>0$ and scalars $\epsilon,\gamma>0$ such that
\begin{equation}
    \begin{bmatrix}
      \dot{\zeta}(\tau)\\
      0
    \end{bmatrix}^T+\begin{bmatrix}
      \zeta(\tau)\\
      \mathds{1}
    \end{bmatrix}^T\left(\begin{bmatrix}
      A(\tau) & E_c(\tau)\\
      C_c & F_d
    \end{bmatrix}+\begin{bmatrix}
      G_c(\tau)\\
      H_c
    \end{bmatrix}(I-H_{c,\Delta})^{-1}\begin{bmatrix}
      C_{c,\Delta} &  F_{c,\Delta}
    \end{bmatrix}\right)\le\begin{bmatrix}
    0\\ \gamma\mathds{1}
    \end{bmatrix}^T
  \end{equation}
  and
    \begin{equation}
    \begin{bmatrix}
      -\zeta(\theta)\\
      0
    \end{bmatrix}^T+\begin{bmatrix}
      \zeta(0)\\
      \mathds{1}
    \end{bmatrix}^T\left(\begin{bmatrix}
      J & E_d\\
      C_d& F_d
    \end{bmatrix}+\begin{bmatrix}
      G_d\\
      H_d
    \end{bmatrix}(I-H_{d,\Delta})^{-1}\begin{bmatrix}
      C_{d,\Delta} &  F_{d,\Delta}
    \end{bmatrix}\right)\le\begin{bmatrix}
      -\epsilon\mathds{1}^T\\ \gamma\mathds{1}
    \end{bmatrix}^T
  \end{equation}
hold for all $\tau\in[0,\Tmax ]$ and all $\theta\in[\Tmin ,\Tmax ]$. Then, the system \eqref{eq:mainsyst2} is asymptotically stable for all $\Delta_c\in\Deltabc$ and $\Delta_d\in\Deltabd$ under the range dwell-time condition $[\Tmin ,\Tmax ]$. Moreover, the mapping $(w_c,w_d)\mapsto(z_c,z_d)$ has a hybrid $L_1/\ell_1$-gain of at most $\gamma$.
\end{theorem}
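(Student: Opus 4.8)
The plan is to derive the conditions of this theorem from those of Theorem \ref{th:rangeDT:general} by explicitly eliminating the scalings $\mu_c,\mu_d$, and then to invoke Theorem \ref{th:rangeDT:general} to conclude. In the unconstrained case the commutation constraints $\diag(\mu_c)\in\mathcal{S}_c$ and $\diag(\mu_d)\in\mathcal{S}_d$ are vacuous, so the only standing requirement on a candidate scaling is positivity. First I would expand \eqref{eq:RangeDT1} block-column by block-column. The middle block-column reads $\zeta(\tau)^TG_c(\tau)+\mu_c(\tau)^T(H_{c,\Delta}-I)+\mathds{1}^TH_c\le0$, i.e. $\mu_c(\tau)^T(I-H_{c,\Delta})\ge\zeta(\tau)^TG_c(\tau)+\mathds{1}^TH_c$, while the first and third block-columns involve $\mu_c$ only through the matrices $C_{c,\Delta}$ and $F_{c,\Delta}$, which are nonnegative by internal positivity, and take the form of upper bounds. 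Hence the left-hand sides of the first and third columns are monotonically nondecreasing in $\mu_c$, so the componentwise-smallest feasible $\mu_c$ is the best choice.

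The key step is then to solve the middle column for this minimal scaling. Using $H_{c,\Delta}\ge0$ together with the invertibility of $I-H_{c,\Delta}$ (well-posedness of the continuous-time uncertainty loop), the Neumann series gives $(I-H_{c,\Delta})^{-1}=\sum_{j\ge0}H_{c,\Delta}^{\,j}\ge I\ge0$, so any feasible $\mu_c$ satisfies $\mu_c(\tau)^T\ge(\zeta(\tau)^TG_c(\tau)+\mathds{1}^TH_c)(I-H_{c,\Delta})^{-1}=:\mu_c^*(\tau)^T$, with equality in the middle column at $\mu_c=\mu_c^*$. Since $\zeta(\tau)\ge0$ throughout $[0,\Tmax]$ (the linear copositive certificate inherits positivity from $\zeta(0)>0$ via the differential inequality) and $G_c(\tau),H_c\ge0$, we get $\mu_c^*\ge0$. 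Substituting $\mu_c^*$ into the first and third block-columns produces exactly the loop-closing (LFT) combination $M_{11}+M_{12}(I-H_{c,\Delta})^{-1}M_{21}$ appearing in the present theorem, i.e. the continuous-time inequality stated here. The identical computation on \eqref{eq:RangeDT2}, with $\mu_d^{*T}:=(\zeta(0)^TG_d+\mathds{1}^TH_d)(I-H_{d,\Delta})^{-1}$, reproduces the discrete-time inequality. Thus the hypotheses of this theorem imply that $\mu_c^*,\mu_d^*$ satisfy \eqref{eq:RangeDT1}--\eqref{eq:RangeDT2}, and Theorem \ref{th:rangeDT:general} yields asymptotic stability for all $\Delta_c\in\Deltabc$, $\Delta_d\in\Deltabd$ together with the hybrid $L_1/\ell_1$-gain bound $\gamma$.

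Two points deserve care. First, membership of $\diag(\mu_c^*)$ in $\mathbb{D}_{\succ0}^{n_{c,\Delta}}$ requires strict positivity; $\mu_c^*\ge0$ is immediate from the above, and strict positivity can be secured either from the slack furnished by $\epsilon,\gamma>0$ or through an arbitrarily small perturbation $\mu_c^*+\delta\mathds{1}$, which only tightens the middle column while perturbing the others continuously. Second, running the elimination backwards shows that any $\mu_c$ feasible for Theorem \ref{th:rangeDT:general} dominates $\mu_c^*$ componentwise, so the present conditions are in fact equivalent to those of Theorem \ref{th:rangeDT:general} in the unconstrained case, and they coincide with the Lyapunov conditions one would write directly for the worst-case system obtained by setting $\Delta_c=\Delta_d=I$ -- consistent with the identity being the worst-case operator in the unit ball \cite{Colombino:15}. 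I expect the main obstacle to be establishing the nonnegativity of $(I-H_{c,\Delta})^{-1}$ and $(I-H_{d,\Delta})^{-1}$, since it is precisely this M-matrix/Metzler property that makes the elimination lossless and guarantees that the resulting worst-case closed-loop $A$-block remains Metzler and the remaining blocks nonnegative.
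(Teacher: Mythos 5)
Your proposal is correct and follows essentially the same route as the paper's own proof: treat the middle block-column of \eqref{eq:RangeDT1}--\eqref{eq:RangeDT2} as defining the smallest admissible scalings, solve for $\mu_c^*(\tau)^T=(\zeta(\tau)^TG_c(\tau)+\mathds{1}^TH_c)(I-H_{c,\Delta})^{-1}$ and $\mu_d^{*T}=(\zeta(0)^TG_d+\mathds{1}^TH_d)(I-H_{d,\Delta})^{-1}$, and substitute them back to recover the LFT-closed conditions, then invoke Theorem \ref{th:rangeDT:general}. The only cosmetic difference is that you justify $(I-H_{c,\Delta})^{-1}\ge0$ via the Neumann series while the paper argues through the Metzler--Hurwitz (M-matrix) property of $H_{c,\Delta}-I$ with a reference to \cite{Berman:94}; these are equivalent facts, and your version of the argument is, if anything, slightly more explicit about positivity of the eliminated scalings.
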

\blue{\begin{proof}
  In the unconstrained scaling case, we can solve for the scalings $\mu_c(\tau)$ and $\mu_d$ in the conditions \eqref{eq:RangeDT1} and \eqref{eq:RangeDT2}. To achieve this, it is enough to observe that since those scalings are positive, we need to take their smallest possible values that satisfy the conditions \eqref{eq:RangeDT1} and \eqref{eq:RangeDT2}. Hence, we can select these scalings such that
\begin{equation}
  \zeta(\tau)^TG_c(\tau)+\mu_c(\tau)^T(H_{c,\Delta}-I)+\mathds{1}^TH_c=0
\end{equation}
and
\begin{equation}
  \zeta(0)^TG_d+\mu_d^T(H_{d,\Delta}-I)+\mathds{1}^TH_d=0.
\end{equation}
Since the system is well-posed, then the Metzler matrices $H_{c,\Delta}-I$ and $H_{d,\Delta}-I$ are invertible. Moreover, for the above equality to hold, it is necessary that $\mu_c(\tau)^T(H_{c,\Delta}-I)$ and $\mu_d^T(H_{d,\Delta}-I)$ be componentwise negative, which is equivalent to saying that the matrices $H_{c,\Delta}-I$ and $H_{d,\Delta}-I$ are Hurwitz stable. Hence, their inverse is nonnegative \cite{Berman:94}. Solving for the scalings values in the above expressions then yields
\begin{equation}
  \mu_c(\tau)^T=-(\zeta(\tau)^TG_c(\tau)+\mathds{1}^TH_c)(H_{c,\Delta}-I)^{-1}>0
\end{equation}
and
\begin{equation}
  \mu_d^T=-(\zeta(0)^TG_d+\mathds{1}^TH_d)(H_{d,\Delta}-I)^{-1}>0.
\end{equation}
Substituting these values in \eqref{eq:RangeDT1} and \eqref{eq:RangeDT2} yields the conditions of Theorem \ref{th:rangeDT:free}. This proves the result.
\end{proof}}
Interestingly, we can see that the stability conditions of Theorem \ref{th:rangeDT:general} reduce to a stability condition where the uncertain operators are replaced by the identity matrix. This is consistent with the results in \cite{Briat:11g,Briat:11h} where this fact was pointed out for the first time and later analyzed in \cite{Colombino:15,Colombino:15b,Briat:15cdc}.

The following theorem states an analogous result in the minimum dwell-time case:
\begin{theorem}[Minimum dwell-time]\label{th:minimumDT:free}
Assume that there exist a differentiable vector-valued function $\zeta:[0,\Tmax ]\mapsto\mathbb{R}^n$, $\zeta(0)>0$ and scalars $\epsilon,\gamma>0$ such that
\begin{equation}
  \begin{bmatrix}
      \zeta(\bar T)\\
      \mathds{1}
    \end{bmatrix}^T\left(\begin{bmatrix}
      A(\bar T) & E_c(\bar T)\\
      C_c & F_d
    \end{bmatrix}+\begin{bmatrix}
      G_c(\tau)\\
      H_c
    \end{bmatrix}(I-H_{c,\Delta})^{-1}\begin{bmatrix}
      C_{c,\Delta} &  F_{c,\Delta}
    \end{bmatrix}\right)\le\begin{bmatrix}
    -\epsilon\mathds{1}\\ \gamma\mathds{1}
    \end{bmatrix}^T,
  \end{equation}
\begin{equation}
    \begin{bmatrix}
      \dot{\zeta}(\tau)\\
      0
    \end{bmatrix}^T+\begin{bmatrix}
      \zeta(\tau)\\
      \mathds{1}
    \end{bmatrix}^T\left(\begin{bmatrix}
      A(\tau) & E_c(\tau)\\
      C_c & F_d
    \end{bmatrix}+\begin{bmatrix}
      G_c(\tau)\\
      H_c
    \end{bmatrix}(I-H_{c,\Delta})^{-1}\begin{bmatrix}
      C_{c,\Delta} &  F_{c,\Delta}
    \end{bmatrix}\right)\le\begin{bmatrix}
    0\\ \gamma\mathds{1}
    \end{bmatrix}^T
  \end{equation}
  and
    \begin{equation}
    \begin{bmatrix}
      -\zeta(\bar T)\\
      0
    \end{bmatrix}^T+\begin{bmatrix}
      \zeta(0)\\
      \mathds{1}
    \end{bmatrix}^T\left(\begin{bmatrix}
      J & E_d\\
      C_d& F_d
    \end{bmatrix}+\begin{bmatrix}
      G_d\\
      H_d
    \end{bmatrix}(I-H_{d,\Delta})^{-1}\begin{bmatrix}
      C_{d,\Delta} &  F_{d,\Delta}
    \end{bmatrix}\right)\le\begin{bmatrix}
      -\epsilon\mathds{1}^T\\ \gamma\mathds{1}
    \end{bmatrix}^T
  \end{equation}
hold for all $\tau\in[0,\bar T]$. Then, the system \eqref{eq:mainsyst2} is asymptotically stable for all $\Delta_c\in\Deltabc$ and $\Delta_d\in\Deltabd$ under the minimum dwell-time condition $\bar T$. Moreover, the mapping $(w_c,w_d)\mapsto(z_c,z_d)$ has a hybrid $L_1/\ell_1$-gain of at most $\gamma$.
\end{theorem}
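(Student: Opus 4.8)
The plan is to reduce Theorem \ref{th:minimumDT:free} to Theorem \ref{th:minimumDT:general} by reconstructing admissible scalings from the scaling-free conditions, exactly mirroring the elimination carried out in the proof of Theorem \ref{th:rangeDT:free}. Concretely, I would assume the three inequalities of Theorem \ref{th:minimumDT:free} hold and then exhibit a timer-dependent scaling $\mu_c:[0,\bar T]\mapsto\mathbb{R}^{n_{c,\Delta}}$ and a constant scaling $\mu_d$ for which the conditions \eqref{eq:minDT1}--\eqref{eq:minDT3} of Theorem \ref{th:minimumDT:general} are satisfied; the conclusion (asymptotic stability and hybrid $L_1/\ell_1$-gain bound $\gamma$) then follows directly from that theorem. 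Because the scalings are unconstrained, the membership requirements $\diag(\mu_c)\in\mathcal{S}_c$ and $\diag(\mu_d)\in\mathcal{S}_d$ reduce to positivity, so the only task is to produce positive scalings with the correct algebraic properties.

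For the continuous-time part I would read off the $w_{c,\Delta}$-block column common to \eqref{eq:minDT1} (evaluated at $\tau=\bar T$) and \eqref{eq:minDT2}; in both it takes the identical form $\zeta(\tau)^TG_c(\tau)+\mu_c(\tau)^T(H_{c,\Delta}-I)+\mathds{1}^TH_c\le0$, the $\dot\zeta$ term contributing nothing to this block. I would then define $\mu_c(\tau)$ as the least feasible value, i.e. by saturating this relation to equality,
\[
\mu_c(\tau)^T = (\zeta(\tau)^TG_c(\tau)+\mathds{1}^TH_c)(I-H_{c,\Delta})^{-1}.
\]
Here I must invoke well-posedness to ensure $H_{c,\Delta}-I$ is invertible, Metzler and Hurwitz, so that $(I-H_{c,\Delta})^{-1}\ge0$ (as in \cite{Berman:94}); combined with $\zeta>0$, $G_c\ge0$ and $H_c\ge0$ this gives $\mu_c(\tau)>0$. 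A single such $\mu_c(\cdot)$ serves both inequalities simultaneously, since at $\tau=\bar T$ the middle-block relations of \eqref{eq:minDT1} and \eqref{eq:minDT2} coincide. Substituting this $\mu_c$ into the remaining ($x$- and $z_c$-) block columns of \eqref{eq:minDT1} and \eqref{eq:minDT2} reproduces, term by term, the first two inequalities of Theorem \ref{th:minimumDT:free}, which hold by hypothesis.

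The discrete part is identical in spirit: the $w_{d,\Delta}$-block of \eqref{eq:minDT3} fixes $\mu_d^T=(\zeta(0)^TG_d+\mathds{1}^TH_d)(I-H_{d,\Delta})^{-1}>0$, using that $H_{d,\Delta}-I$ is Hurwitz Metzler with nonnegative inverse, and substituting into the surviving block columns recovers the third inequality of Theorem \ref{th:minimumDT:free}. The minimality of the chosen scalings is what makes the substitution an \emph{equivalence} rather than a one-way implication: since $C_{c,\Delta},F_{c,\Delta},C_{d,\Delta},F_{d,\Delta}\ge0$ and the scalings are nonnegative, the scaling-dependent terms in the $x$- and $z$-block columns are nondecreasing in $\mu_c,\mu_d$, so no larger admissible scaling could satisfy a condition that the least one violates. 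With all of \eqref{eq:minDT1}--\eqref{eq:minDT3} verified, Theorem \ref{th:minimumDT:general} applies and finishes the argument.

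I expect the main obstacle to be the bookkeeping around the \emph{shared} timer-dependent scaling: one must check that the same $\mu_c(\cdot)$ can satisfy the endpoint condition \eqref{eq:minDT1} and the distributed condition \eqref{eq:minDT2} at once, which hinges on the two middle-block relations agreeing at $\tau=\bar T$, and that the explicit formula for $\mu_c(\tau)$ inherits continuity from $\zeta$ and the data so that it is a legitimate element of the scaling set. The sign analysis guaranteeing $(I-H_{c,\Delta})^{-1}\ge0$ and $(I-H_{d,\Delta})^{-1}\ge0$ from well-posedness is the other point that must be stated carefully, although it is the same computation already used for Theorem \ref{th:rangeDT:free}.
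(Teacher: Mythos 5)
Your proposal is correct and is essentially the paper's own argument: the paper states Theorem \ref{th:minimumDT:free} without proof as the ``analogous result'' to Theorem \ref{th:rangeDT:free}, whose proof is precisely your scaling-elimination scheme (saturate the $w_{c,\Delta}$- and $w_{d,\Delta}$-block inequalities to define the minimal scalings $\mu_c(\tau)^T=(\zeta(\tau)^TG_c(\tau)+\mathds{1}^TH_c)(I-H_{c,\Delta})^{-1}$ and $\mu_d^T=(\zeta(0)^TG_d+\mathds{1}^TH_d)(I-H_{d,\Delta})^{-1}$, use well-posedness and the Metzler/Hurwitz property to get nonnegativity of the inverses, and substitute back into Theorem \ref{th:minimumDT:general}). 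Your additional observations -- that the same $\mu_c(\cdot)$ serves both \eqref{eq:minDT1} and \eqref{eq:minDT2} because their middle blocks coincide at $\tau=\bar T$, and that monotonicity in the scalings makes the elimination lossless -- are sound refinements of what the paper leaves implicit.
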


\blue{\subsection{Computational considerations}

Several methods can be used to check the conditions stated in Theorem \ref{th:rangeDT:general}, Theorem \ref{th:minimumDT:general}, Theorem \ref{th:rangeDT:free} and Theorem \ref{th:minimumDT:free}. The piecewise linear discretization approach \cite{Allerhand:11,Xiang:15a,Briat:16c} assumes that the decision variables are piecewise linear functions of their arguments and leads to a finite-dimensional linear program that can be checked using standard linear programming algorithms. Another possible approach is based on Handelman's Theorem \cite{Handelman:88} and also leads to a finite-dimensional program \cite{Briat:11h,Briat:16c}. We opt here for an approach based on Putinar's Positivstellensatz \cite{Putinar:93} and semidefinite programming \cite{Parrilo:00}\footnote{See \cite{Briat:16c} for a comparison of all these methods.}. Before stating the main result of the section, we need to define first some terminology. A multivariate polynomial $p(x)$ is said to be a sum-of-squares (SOS) polynomial if it can be written as $\textstyle p(x)=\sum_{i}q_i(x)^2$ for some polynomials $q_i(x)$. A polynomial matrix $p(x)\in\mathbb{R}^{n\times m}$ is said to \emph{componentwise sum-of-squares} (CSOS) if each of its entries is an SOS polynomial. Checking whether a polynomial is SOS can be exactly cast as a semidefinite program \cite{Parrilo:00,Chesi:10b} that can be easily solved using semidefinite programming solvers such as SeDuMi \cite{Sturm:01a}. The package SOSTOOLS \cite{sostools3} can be used to formulate and solve SOS programs in a convenient way.

Below is the SOS implementation of the conditions of Theorem \ref{th:minimumDT:general}:
\begin{proposition}\label{prop:SOS1}
  Let $d\in\mathbb{N}$, $\eps>0$ and $\epsilon>0$ be given and assume that there exist polynomials $\chi_i:\mathbb{R}\mapsto\mathbb{R}$, $i=1,\ldots,n$, $U_c:\mathbb{R}\mapsto\mathbb{R}^{n\times q_c}$, $\Psi_1:\mathbb{R}\mapsto\mathbb{R}^{n\times n}$,  $\Psi_2:\mathbb{R}\mapsto\mathbb{R}^{n\times q_c}$ and $\psi_1,\psi_2:\mathbb{R}\mapsto\mathbb{R}^{n}$, $\psi_3:\mathbb{R}\mapsto\mathbb{R}^{p_c}$ of degree $2d$, a matrix $U_d\in\mathbb{R}^{n\times q_d}$ and scalars $\alpha,\gamma\ge0$ such that
  \begin{enumerate}[(a)]
    \item $\Psi_1(\tau),\Psi_2(\tau),\psi_1(\tau),\psi_2(\tau)$ and $\psi_3(\tau)$ are CSOS,
    \item $X(0)-\epsilon I_n\ge0$ (or is CSOS),
    \item $X(\tau)A-U_c(\tau)C_c+\alpha I_n-\Psi_1(\tau)f(\tau)$ is CSOS,
    \item $X(0) J-U_d C_d\ge0$ (or is CSOS),
    \item $X(\tau)E_c-U_c(\tau)F_c-\Psi_2(\tau)f(\tau)$ is CSOS,
    \item $X(0)E_d-U_d F_d\ge0$ (or is CSOS),
    \item $-\mathds{1}^T\left[\dot{X}(\tau)+X(\tau)A-U_c(\tau)C_c\right]-f(\tau)\psi_1(\tau)^T$

    is CSOS,
    \item $-\mathds{1}^T\left[X(0) J-U_d C_d-X(\theta)+\eps I\right]-g(\theta)\psi_2(\theta)^T$

     is CSOS,
     %
     %
     \item $-\mathds{1}^T\left[X(\tau)E_c-U_c(\tau)F_c\right]+\gamma \mathds{1}^T-f(\tau)\psi_3(\tau)$ is CSOS,
     \item $-\mathds{1}^T\left[X(0)E_d-U_d F_d\right]+\gamma \mathds{1}^T\ge0$  (or is CSOS)
  \end{enumerate}
  where $X(\tau):=\diag_{i=1}^n(\chi_i(\tau))$, $f(\tau):=\tau(T_{max}-\tau)$ and $g(\theta):=(\theta-T_{min})(T_{max}-\theta)$.

  Then, the conditions of statement (b) of Theorem \ref{th:1} hold with the same $X(\tau)$, $U_c(\tau)$, $U_d$, $\alpha$, $\eps$ and $\gamma$.
\end{proposition}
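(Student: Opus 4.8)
The plan is to show that conditions (a)--(j) are nothing but componentwise Positivstellensatz certificates for the pointwise functional inequalities of the target dwell-time theorem, obtained after a fixed change of variables. The first step is to set up the dictionary between the two formulations. Since $X(\tau)=\diag_{i=1}^n(\chi_i(\tau))$ is diagonal, I would identify the Lyapunov variable $\zeta$ of Theorem~\ref{th:minimumDT:general} (equivalently Theorem~\ref{th:rangeDT:general}) through $\zeta(\tau)^T=\mathds{1}^TX(\tau)=\col_{i=1}^n(\chi_i(\tau))^T$, so that $\zeta(0)>0$ is encoded by (b) and the differentiability requirement holds automatically because the $\chi_i$ are polynomials. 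The auxiliary matrices $U_c(\tau)$ and $U_d$ play the role of the gain products (e.g. $U_c=XL_c$, $U_d=XL_d$), so that blocks such as $X(\tau)A-U_c(\tau)C_c$ are exactly the closed-loop state blocks $X(\tau)(A-L_cC_c)$ appearing after substitution; the scalars $\alpha,\epsilon,\eps,\gamma$ are carried over unchanged.

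The key tool is the univariate form of Putinar's Positivstellensatz \cite{Putinar:93}: a polynomial $p$ is nonnegative on a compact interval $[a,b]$ whenever it admits a representation $p(\tau)=\sigma_0(\tau)+\sigma_1(\tau)(\tau-a)(b-\tau)$ with $\sigma_0,\sigma_1$ sums of squares. Because we work in the positive-systems / $L_1$ setting, every inequality in the theorem is to be read componentwise, so I would apply this fact entrywise, i.e. at the level of CSOS. With this reading, the polynomials $f(\tau)=\tau(\Tmax-\tau)$ and $g(\theta)=(\theta-\Tmin)(\Tmax-\theta)$ are precisely the interval-defining multipliers for $\tau\in[0,\Tmax]$ and $\theta\in[\Tmin,\Tmax]$, while $\Psi_1,\Psi_2,\psi_1,\psi_2,\psi_3$ are the corresponding SOS multipliers $\sigma_1$. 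The forward implication is then immediate in principle: on the relevant interval $f,g\ge0$ and every CSOS term is nonnegative everywhere, so each condition of the form ``(theorem block) $-$ (multiplier)$\cdot f$ is CSOS'' forces the theorem block to be componentwise nonnegative there.

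With the dictionary fixed, I would verify the correspondence block by block. The continuous-time inequality \eqref{eq:minDT2} (resp. \eqref{eq:RangeDT1}) splits, after transposition and the substitution $\zeta^T=\mathds{1}^TX$, into a state block, an uncertainty-channel block and a performance block; these would be matched respectively by (g) (carrying $\dot X$ and the $A$-block), by (c) and (e) (the Metzler/positivity structure of the off-diagonal and input blocks, certified through $\Psi_1f,\Psi_2f$), and by (i) (the $\gamma\mathds{1}$ performance row). The jump inequality \eqref{eq:minDT3} (resp. \eqref{eq:RangeDT2}) is handled analogously by the constant-in-$\tau$ conditions evaluated at $\tau=0$: (d) and (f) encode the nonnegativity of $X(0)J-U_dC_d$ and $X(0)E_d-U_dF_d$, (j) the discrete performance row, and (h) the state row carrying the $-\zeta(\theta)$ term --- this is the one discrete condition that genuinely depends on $\theta$, which is why it alone among the jump conditions is multiplied by $g(\theta)$. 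The signs are reconciled by noting that the wrappers $-\mathds{1}^T[\cdots]$ convert the theorem's $\le$ inequalities into the $\ge0$ (CSOS) form, and that the slacks $\alpha,\eps$ supply the strict margins required for asymptotic stability.

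The main obstacle is not analytic but organizational: correctly decomposing each block matrix inequality of the theorem into the individual componentwise conditions and tracking the sign and transpose conventions so that each of the ten items lands on its intended counterpart, while checking that where a multiplier is omitted (the purely discrete, $\tau$-independent items) the associated quantity is indeed a constant matrix, so that plain nonnegativity --- or a constant CSOS certificate --- suffices. Once this bookkeeping is settled, the conclusion will follow by simply evaluating the CSOS identities at an arbitrary point of the interval and invoking $f,g\ge0$ there, exactly as in the integration argument underlying Theorem~\ref{th:rangeDT:general}.
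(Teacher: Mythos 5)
Your proposal is correct and follows essentially the same route as the paper: the paper's proof simply defers to the argument of Proposition 3.15 in \cite{Briat:16c}, which is exactly the multiplier argument you spell out --- CSOS terms are pointwise nonnegative, $f(\tau)\ge0$ on $[0,\Tmax]$ and $g(\theta)\ge0$ on $[\Tmin,\Tmax]$, so each certificate forces the corresponding componentwise inequality of the dwell-time theorem on its interval, with $\zeta(\tau)^T=\mathds{1}^TX(\tau)$ as the change of variables. Note only that invoking Putinar's Positivstellensatz is unnecessary for this direction (it is needed only for the asymptotic-exactness remark); the sufficiency you prove uses just the trivial evaluation argument, as you in fact acknowledge.
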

\begin{proof}
The proof follows from the same arguments as the proof of Proposition 3.15 in \cite{Briat:16c}.
\end{proof}

\begin{remark}[Asymptotic exactness]
The above relaxation is asymptotically exact under very mild conditions \cite{Putinar:93} in the sense that if the original conditions of Theorem \ref{th:1} hold then we can find a degree $d$ for the polynomial variables for which the conditions in Proposition \ref{prop:SOS1} are feasible. This follows from the fact that a univariate polynomial is SOS if and only if it is positive semidefinite and that polynomials are dense in the set of continuous functions defined on a compact set. See \cite{Briat:16c} for more details.
\end{remark}}

\subsection{Example}

Let us consider here the system
\begin{equation}\label{eq:uncimp}
  \begin{array}{rcl}
    \dot{x}(t)&=&\left(\begin{bmatrix}
      -1 & 0\\
      1 & -3
    \end{bmatrix}+\dfrac{\theta(t)}{2-\theta(t)}\begin{bmatrix}
      0 & 1\\
      0 & 0
    \end{bmatrix}\right)x(t)+\begin{bmatrix}
      1\\
      0
    \end{bmatrix}w_c(t)\\
    z_c(t)&=&\begin{bmatrix}
      0 & 1
    \end{bmatrix}x(t)\\
    x(t_k^+)&=&2x(t_k)\\
    z_d(k)&=&\begin{bmatrix}
      0 & 1
    \end{bmatrix}x(t_k).
  \end{array}
\end{equation}
We can rewrite this system in the form \eqref{eq:mainsyst2} together with the matrices:
\begin{equation}
\begin{array}{c}
    A=\begin{bmatrix}
      -1 & 0\\
      1 & -3
    \end{bmatrix},G_c=\begin{bmatrix}
      0 & 2\alpha\\
      0 & 0
    \end{bmatrix},E_c=\begin{bmatrix}
      1\\
      0
    \end{bmatrix},\\
    C_{c,\Delta}=H_{c,\Delta}=\begin{bmatrix}
      1/2 & 0\\
      0 & 1/2
    \end{bmatrix},F_{c,\Delta}=0,\\
    C_c=\begin{bmatrix}
      0 & 1
    \end{bmatrix},H_c=F_c=0\\
    J = \begin{bmatrix}
      2 & 0\\
      0 & 2
    \end{bmatrix}, C_d=C_c
\end{array}
\end{equation}
and all the other matrices in the discrete-time part of the system are equal to 0. The parameter $\theta$ is assumed to take values in $[-1,1]$. The matrix  $A+G_c(I-H_{c,\Delta})^{-1}C_{c,\Delta}$ is Hurwitz stable, so this system is a candidate for a system that is stable under a minimum dwell-time constraint. Solving then the conditions of Theorem \ref{th:minimumDT:free}, we get the results depicted in Fig.~\ref{fig:minDT_L2_impunc} where the hybrid $L_1/\ell_1$-gain of the system \eqref{eq:uncimp} is plotted as a function of the minimum dwell-time, for various polynomial degrees. For information, the number of primal/dual variables is 87/27 and 137/33 when the polynomials are of degree 4 and 6, respectively. As suspected, the conservatism is reduced by increasing the degree of the polynomials and the gain decreases as the minimum dwell-time increases.

\begin{figure}
  \centering
  \includegraphics[width=0.75\textwidth]{./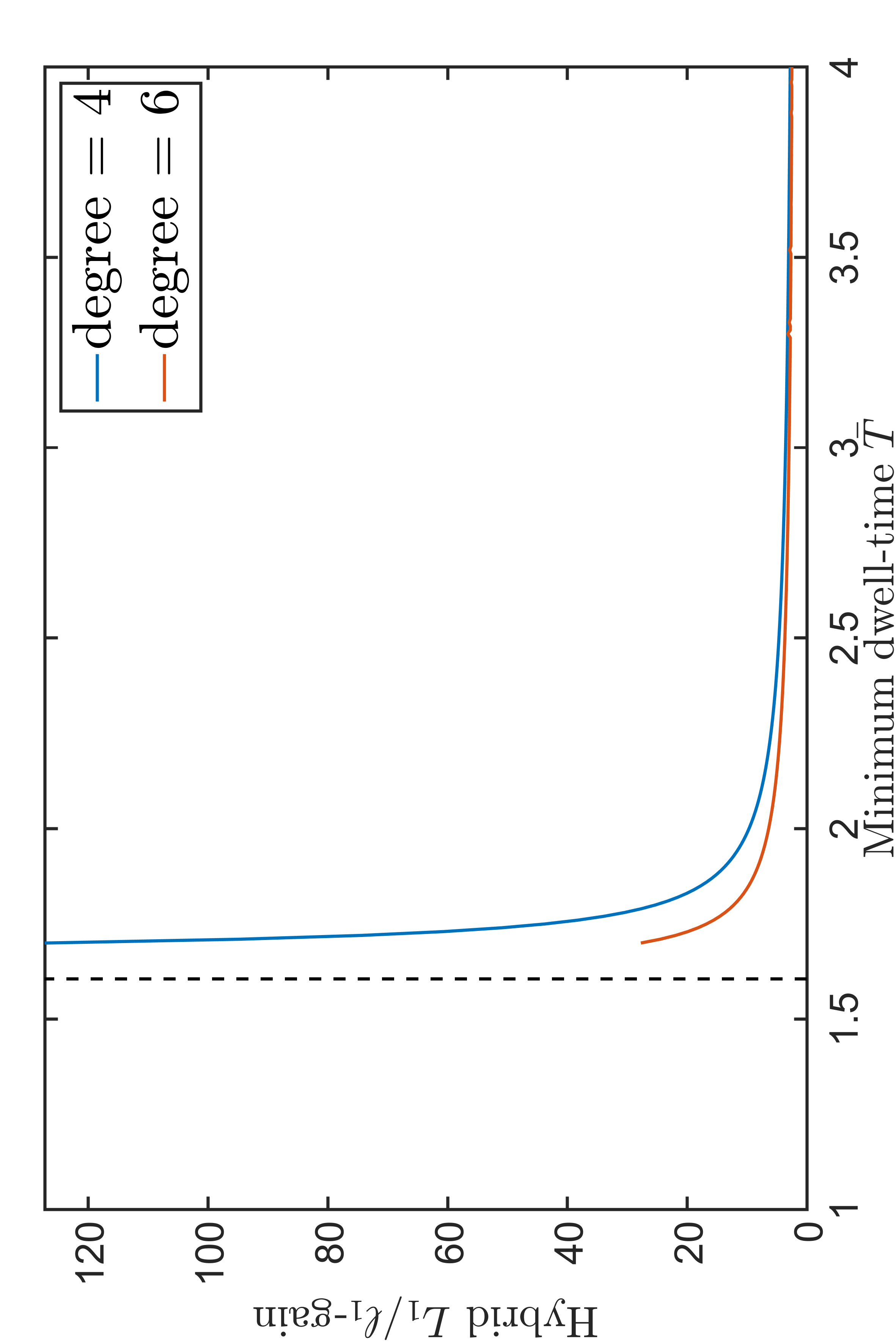}
  \caption{Evolution of the hybrid $L_1/\ell_1$-gain of the system \eqref{eq:uncimp} as a function of the minimum dwell-time and for various polynomial degrees. The vertical black dashed line represents the minimum value 1.67 for the dwell-time in the constant dwell-time case; i.e. periodic impulses.}\label{fig:minDT_L2_impunc}
\end{figure}

\section{Impulsive linear positive systems with delays}\label{sec:lpsd}

We now address the case of impulsive systems with delays. The idea is to first rewrite the time-delay system into an uncertain delay-free system, a method that has proven to be very convenient to work with as all the difficulties are circumvented by ''hiding'' the infinite-dimensional dynamics inside operators that are treated as norm-bounded uncertainties; see e.g. \cite{Zhang:00,RantzerMegretski:97,Kao:07,Briat:book1}. Once rewritten into this form, the results obtained in the previous section become applicable to yield stability and performance conditions for linear positive impulsive systems with delays. There is, however, an additional difficulty here stemming from the fact that the delay operators are not memoryless, which will impose some structural constraints on the set of continuous-time scalings that can be considered. It will be notably shown that the unconstrained scalings case can only arise when a periodicity condition is met by the dwell-time sequence. More specifically, the sequence of dwell-times has to be $h_c/\alpha$-periodic where $\alpha$ is a positive integer.

\subsection{Preliminaries}

Let us consider the following linear system with delay
\begin{equation}\label{eq:posdel}
\begin{array}{rcl}
  \begin{bmatrix}
    \dot{x}(t_k+\tau)\\
     z_c(t_k+\tau)
  \end{bmatrix}&=&\begin{bmatrix}
    A(\tau) & G_c(\tau) & E_c(\tau)\\
    C_c & H_c & F_c
  \end{bmatrix}\begin{bmatrix}
    x(t_k+\tau)\\
    x(t_k+\tau-h_c)\\
    w_c(t_k+\tau)
  \end{bmatrix},\tau\in(0,T_k],k\in\mathbb{Z}_{\ge0}\\
  \begin{bmatrix}
  x(t_k^+)\\
  z_d(k)
  \end{bmatrix}&=&\begin{bmatrix}
    J & G_d & E_d\\
    C_d & H_d & F_d
  \end{bmatrix}\begin{bmatrix}
    x(t_k)\\
    x(t_{k-h_d})\\
    w_d(k)
  \end{bmatrix},\ k\in\mathbb{Z}_{\ge 1}\\
  x(s)&=&\phi_0(s),\ s\in[-h_c,0]
\end{array}
\end{equation}
where $x\in\mathbb{R}^n$, $\phi_0\in C([-h_c,0],\mathbb{R}^n)$, $w_c\in\mathbb{R}^{p_c}$, $w_d\in\mathbb{R}^{p_d}$, $z_c\in\mathbb{R}^{q_c}$ and $z_d\in\mathbb{R}^{q_d}$ are the state of the system, the functional initial condition, the continuous-time exogenous input, the discrete-time exogenous input, the continuous-time performance output and the discrete-time performance output, respectively. The delays $h_c$ and $h_d$ are assumed to be constant. It is well-known that the above system can be rewritten in the form \eqref{eq:mainsyst2} with the same matrices and the additional uncertainty channels
\begin{equation}
  w_{c,\Delta}(t)=x(t-h_c)=(\Delta_cz_{c,\Delta})(t)
\end{equation}
and
\begin{equation}
  w_{d,\Delta}(k)=x(k-h_d)=(\Delta_dz_{d,\Delta})(k)
\end{equation}
together with the identities $z_{c,\Delta}(t)=x(t)$ and $z_{d,\Delta}(k)=x(t_k)$ (i.e. $C_{c,\Delta}=C_{d,\Delta}=I$, $H_{c,\Delta}=H_{d,\Delta}=0$ and $F_{c,\Delta}=F_{d,\Delta}=0$). Constant delay operators are known to have unit $L_1$-gain; see e.g. \cite{Briat:11h,Briat:16b}.

We have the following result regarding the state positivity of the impulsive system \eqref{eq:mainsyst2}.
\begin{proposition}
The following statements are equivalent:
\begin{enumerate}[(a)]
  \item The system \eqref{eq:mainsyst2} is internally positive, i.e. for any $\phi_0\ge0$, $s\in[-h_c,0]$, $w_c(t)\ge0$ and $w_d(k)\ge0$, we have that $x(t),z_c(t)\ge0$ for all $t\ge0$ and $z_d(k)\ge0$ for all $k\in\mathbb{Z}_{\ge0}$.
  \item The matrix-valued function $A(\tau)$ is Metzler for all $\tau\ge0$, the matrix-valued functions $E_c(\tau)$ and $G_c(\tau)$ are nonnegative for all $\tau\ge0$ and the matrices $J,G_d,E_d,C_c,H_c,F_c,C_d,H_d,F_d$ are nonnegative.
\end{enumerate}
\end{proposition}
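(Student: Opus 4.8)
The plan is to recognize the delay system \eqref{eq:posdel} as precisely the special instance of the general uncertain system \eqref{eq:mainsyst2} identified in the preceding paragraph, where $C_{c,\Delta}=C_{d,\Delta}=I$, $H_{c,\Delta}=H_{d,\Delta}=0$ and $F_{c,\Delta}=F_{d,\Delta}=0$, and where the uncertainty channels are closed through the delay operators so that $w_{c,\Delta}(t)=x(t-h_c)$ and $w_{d,\Delta}(k)=x(t_{k-h_d})$. The crucial structural observation is that a constant delay operator is a \emph{positive} operator: it merely reproduces past values of its input, hence maps nonnegative signals to nonnegative signals. Since the uncertainty-channel matrices $C_{c,\Delta}=I$, $H_{c,\Delta}=0$, $F_{c,\Delta}=0$ (and their discrete counterparts) are automatically nonnegative, the matrix conditions listed in statement (b) here coincide exactly with the hypotheses of the positivity characterization already established for \eqref{eq:mainsyst2}; the delay feedback generates no additional requirement.

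For the direction (b)$\Rightarrow$(a), I would run a bootstrapping argument over successive windows of length $h_c$ in continuous time (and over blocks of $h_d$ steps in discrete time). Assuming (b), the earlier result guarantees that \eqref{eq:mainsyst2} sends nonnegative initial data together with nonnegative channel inputs $(w_c,w_{c,\Delta},w_d,w_{d,\Delta})$ to nonnegative states and outputs, through flows and jumps alike. On the first window $t\in[0,h_c]$ the delayed input is the known signal $w_{c,\Delta}(t)=\phi_0(t-h_c)\ge0$, so $x(t)\ge0$ there; on $[h_c,2h_c]$ we then have $w_{c,\Delta}(t)=x(t-h_c)\ge0$ by the previous step, and the induction continues, with the impulsive jumps handled identically via nonnegativity of $J$ and the discrete delay feedback. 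Nonnegativity of the outputs $z_c,z_d$ follows at each stage from that of $C_c,H_c,F_c,C_d,H_d,F_d$.

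For the converse (a)$\Rightarrow$(b), I would invoke the standard isolating arguments for positive impulsive systems: the Metzler property of $A(\tau)$ is forced by requiring the continuous flow to leave $\mathbb{R}^n_{\ge0}$ invariant, while nonnegativity of $E_c(\tau),G_c(\tau),J,G_d,E_d$ and of the output matrices is read off by exciting one channel at a time with nonnegative data and inspecting the sign of the response. The one point requiring care, and what I expect to be the main obstacle, is that $w_{c,\Delta}$ is not a free exogenous input but the delayed state, so it cannot be prescribed independently; I would instead choose the nonnegative history $\phi_0$ so that the delayed state realizes the desired nonnegative excitation on a short initial window, which is enough to isolate each matrix. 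With both implications in hand the equivalence follows, paralleling the proof of the positivity result for \eqref{eq:mainsyst2}; see e.g. \cite{Farina:00}.
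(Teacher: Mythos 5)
Your proposal is correct and is, in substance, exactly the argument the paper compresses into its one-line proof: the paper justifies this proposition simply as the ``combination of positivity conditions for both continuous-time and discrete-time systems'' citing \cite{Farina:00}, which is precisely what your reduction to the positivity characterization of the LFT system \eqref{eq:mainsyst2}, the method-of-steps induction over windows of length $h_c$ for sufficiency, and the standard channel-isolation arguments for necessity spell out in detail. Your observation that the delayed state is not a free input and must be excited through the nonnegative history $\phi_0$ is a real subtlety that the paper's citation glosses over entirely, so your attempt is, if anything, more complete than the paper's own proof.
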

\blue{\begin{proof}
  The proof relies on the combination of positivity conditions for both continuous-time and discrete-time systems; see e.g. \cite{Farina:00}.
\end{proof}}

\subsection{Scalings}

While the choice for the diagonal discrete-time scaling matrix $S_d$ is obvious in this case, the fact that $S_c$ depends on the value of the clock/timer variable $\tau$ makes it more complicated. It is proven below that the only moment where we can use a timer-dependent scaling is when the scaling exhibits some periodic behavior:
\begin{proposition}
  Let us define the function $\tilde{S}_c(t)=S_c(t_k+\tau)$ when $t\in(t_k,t_{k+1}]$. Then, the two statements are equivalent:
 \begin{enumerate}[(a)]
   \item the function $\tilde{S}_c$ is $h_c$-periodic;
   \item the equality $\tilde{S}_c\Delta_c\tilde{S}_c^{-1}=\Delta_c$ holds.
 \end{enumerate}
 \end{proposition}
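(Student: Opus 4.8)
The plan is to read both objects as concrete operators acting on time signals and then collapse the operator identity in (b) into a pointwise condition on the diagonal matrix-valued function $\tilde{S}_c(\cdot)$, which will turn out to be exactly the periodicity statement (a). First I would recall that, in this delay setting, $\Delta_c$ is the pure delay operator $(\Delta_c f)(t)=f(t-h_c)$, while $\tilde{S}_c$ acts as the time-varying multiplication operator $(\tilde{S}_c f)(t)=\tilde{S}_c(t)f(t)$. Since $S_c$ takes values in $\mathbb{D}_{\succ0}^{n_{c,\Delta}}$, the matrix $\tilde{S}_c(t)$ is invertible for every $t$, so $\tilde{S}_c^{-1}$ is well defined and also acts by pointwise multiplication, $(\tilde{S}_c^{-1}f)(t)=\tilde{S}_c(t)^{-1}f(t)$.

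The core step is a direct composition computation: applying the three operators in turn to an arbitrary test signal $f$ yields
\begin{equation*}
  (\tilde{S}_c\,\Delta_c\,\tilde{S}_c^{-1}f)(t)=\tilde{S}_c(t)\,\tilde{S}_c(t-h_c)^{-1}\,f(t-h_c),
\end{equation*}
whereas $(\Delta_c f)(t)=f(t-h_c)$. Consequently, statement (b) is equivalent to requiring $\tilde{S}_c(t)\,\tilde{S}_c(t-h_c)^{-1}f(t-h_c)=f(t-h_c)$ for all $t$ and all admissible signals $f$.

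Then I would invoke a standard test-function argument: since $f$ is arbitrary and $t\mapsto t-h_c$ is surjective, the preceding identity holds for all $f$ if and only if the diagonal matrix coefficient equals the identity, i.e. $\tilde{S}_c(t)\,\tilde{S}_c(t-h_c)^{-1}=I$ for all $t$. Because $\tilde{S}_c(t)$ is invertible, this is in turn equivalent to $\tilde{S}_c(t)=\tilde{S}_c(t-h_c)$ for all $t$, which is precisely the $h_c$-periodicity of $\tilde{S}_c$ asserted in (a). Both implications (a)$\Rightarrow$(b) and (b)$\Rightarrow$(a) thus fall out of this single equivalence, so no separate treatment of the two directions is needed.

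The main obstacle I anticipate is not the algebra, which is elementary, but making the operator interpretation and the domain bookkeeping rigorous: one must ensure that $\tilde{S}_c$, assembled piecewise from $S_c(\cdot)$ over the intervals $(t_k,t_{k+1}]$, is genuinely defined on all of the relevant time axis (extended suitably so that the argument $t-h_c$ makes sense), and that the test-function reduction is carried out over the correct signal class. It is also worth stressing explicitly that $h_c$-periodicity of $\tilde{S}_c$ is a severe restriction: since $\tilde{S}_c$ is the concatenation of the timer-dependent profile $S_c(\cdot)$ laid out along the dwell-time sequence $\{t_k\}$, demanding $\tilde{S}_c(t)=\tilde{S}_c(t-h_c)$ forces a compatibility between $h_c$ and the dwell-time pattern, which is the restrictive periodicity condition alluded to in the surrounding discussion and which is what ultimately prevents the free use of timer-dependent scalings in the delay case.
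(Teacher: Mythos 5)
Your proposal is correct and follows essentially the same route as the paper's proof: both compute the composition $\tilde{S}_c\,\Delta_c\,\tilde{S}_c^{-1}$ to obtain the multiplication factor $\tilde{S}_c(t)\tilde{S}_c(t-h_c)^{-1}$ acting ahead of the delay, reduce the operator identity to this factor being the identity matrix, and read off $h_c$-periodicity. Your version is slightly more careful in making the test-signal argument explicit (the paper states the reduction without it, and even contains a small typo, writing $\tilde{S}_c(t)=\tilde{S}_c(t-h_c)^{-1}$ where no inverse should appear), but the underlying argument is the same.
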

\begin{proof}
  Clearly, we have that $\tilde{S}_c(t)(\Delta_c\tilde{S}_c^{-1})(t)=\tilde{S}_c(t)\tilde{S}_c(t-h_c)^{-1}\Delta_c=\diag_i(\tilde s_c^i(t)/\tilde s_c^i(t-h_c))\Delta_c$ where $\tilde S_c=:\diag_i(\tilde s_c^i)$. Hence, we have that $\tilde{S}_c(t)\tilde{S}_c(t-h_c)^{-1}\Delta_c=\Delta_c$ for all $t\ge0$ if and only if $\tilde{S}_c(t)\tilde{S}_c(t-h_c)^{-1}=I$. This is the case if and only  $\tilde{S}_c(t)=\tilde{S}_c(t-h_c)^{-1}$; i.e. the functions are $h_c$-periodic. This proves the result.
\end{proof}
The above result gives a general result based on a periodicity property of the function $\tilde{S}_c$. The problem is that we ignore the fact that this function consists of the concatenation of the elementary function $S_c$ taken on intervals of different lengths. This function depends on the considered dwell-time sequence $\mathcal{T}:=\{T_k\}_{k\ge0}$. In the constant dwell-time case, we have that $\mathcal{T}\in\mathcal{T}_{\bar T}:=\{\{T_0,T_1,\ldots\}:T_k=k\bar T,k\in\mathbb{Z}_{\ge0}\}$ which includes only one sequence. However, in most of the realistic scenarios, we work with families of dwell-sequences. In particular, the set of sequence satisfying a minimum dwell-time condition is given by
$$\mathcal{T}_{\ge\bar T}:=\{\{T_0,T_1,\ldots\}:T_k\ge\bar T,k\in\mathbb{Z}_{\ge0}\}$$
and the set of sequences satisfying a range dwell-time condition by
$$\mathcal{T}_{\in[\Tmin ,\Tmax ]}:=\{\{T_0,T_1,\ldots\}:T_k\in[[\Tmin ,\Tmax ]],k\in\mathbb{Z}_{\ge0}\}.$$

\begin{proposition}
  The function $\tilde{S}_c$ is $h_c$-periodic if and only if $\mathcal{T}$ is a sequence consisting of any repeating sequence of $q\in\mathbb{Z}_{>0}$ dwell-times such that $T_0+\ldots+T_{q-1}=h_c/\alpha$ for some $\alpha\in\mathbb{Z}_{>0}$.
\end{proposition}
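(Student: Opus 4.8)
The plan is to reduce the periodicity of $\tilde{S}_c$ to the periodicity of the underlying sawtooth timer. Define the timer map $\tau(t):=t-t_k$ for $t\in(t_k,t_{k+1}]$, so that $\tilde{S}_c=S_c\circ\tau$, where $\tau$ is the piecewise-linear function of unit slope that resets to $0$ at every impulse instant $t_k$. Since a genuinely timer-dependent scaling $S_c$ restarts each elementary piece from the common value $S_c(0)$, the reset instants of $\tilde{S}_c$ coincide with those of $\tau$, and $\tilde{S}_c$ is $h_c$-periodic if and only if $\tau$ is $h_c$-periodic. First I would record this equivalence (the only place where a non-degeneracy---e.g. injectivity---assumption on $S_c$ enters), and then characterise combinatorially when $\tau$ itself is $h_c$-periodic.

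For the sufficiency direction I would start from the repeating-sequence hypothesis $T_{k+q}=T_k$ for all $k\ge0$ with $P:=T_0+\cdots+T_{q-1}=h_c/\alpha$. Summing $q$ consecutive dwell-times gives $t_{k+q}-t_k=\sum_{j=k}^{k+q-1}T_j=P$ for every $k$, i.e. $t_{k+q}=t_k+P$. Consequently, for $t\in(t_k,t_{k+1}]$ one has $t+P\in(t_{k+q},t_{k+q+1}]$ and $\tau(t+P)=t+P-t_{k+q}=t-t_k=\tau(t)$, so $\tau$, and hence $\tilde{S}_c$, is $P$-periodic. Since $h_c=\alpha P$ is an integer multiple of $P$, periodicity with period $P$ implies periodicity with period $h_c$, which closes this direction.

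For the necessity direction I would exploit the boundary behaviour at $t_0=0$. Because $\tau(0^+)=0$ (a tooth starts at the origin), $h_c$-periodicity forces $\tau(h_c^+)=\tau(0^+)=0$, which can only occur if $h_c$ is itself a reset instant, say $h_c=t_m=T_0+\cdots+T_{m-1}$ for some $m\in\mathbb{Z}_{>0}$: indeed, if $h_c$ lay strictly inside a tooth $(t_j,t_{j+1})$ one would get $\tau(h_c^+)=h_c-t_j>0$. Comparing the tooth pattern of $\tau$ on $(0,h_c]$ with that on $(h_c,2h_c]$ via $\tau(t)=\tau(t-h_c)$ then forces the reset instants to satisfy $t_{m+i}=t_m+t_i$, hence the widths to match, i.e. $T_{k+m}=T_k$ for all $k\ge0$; thus the dwell-time sequence is periodic in its index. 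Letting $q$ be the minimal such index-period, one has $q\mid m$, say $m=\alpha q$ with $\alpha\in\mathbb{Z}_{>0}$, so $h_c=T_0+\cdots+T_{m-1}=\alpha\,(T_0+\cdots+T_{q-1})$, which is exactly $T_0+\cdots+T_{q-1}=h_c/\alpha$ for a repeating sequence of $q$ dwell-times.

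The main obstacle I anticipate is the necessity direction, where two points require care. First, the reduction from periodicity of $\tilde{S}_c$ to periodicity of $\tau$ genuinely needs $S_c$ to be honestly timer-dependent---a constant scaling is trivially periodic for every sequence, so the stated equivalence must be read under this proviso. Second, the bookkeeping near $t_0=0$ together with the passage from the possibly non-minimal period $m$ to the minimal period $q$ is what produces the integer $\alpha$ in the final condition and must be handled cleanly. The sufficiency direction, by contrast, is a direct computation once the identity $t_{k+q}=t_k+P$ has been observed.
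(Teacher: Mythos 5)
Your proof is correct, and it is worth noting that the paper itself offers no argument here -- its proof reads ``The proof is immediate'' -- so your write-up supplies exactly the details the author leaves out. Both directions are sound: the sufficiency computation via $t_{k+q}=t_k+P$ and $\tau(t+P)=\tau(t)$, and the necessity argument forcing $h_c$ to be a reset instant $t_m$ (since $\tau(h_c^+)=0$ can only happen at an impulse time) and then matching the tooth patterns to get $T_{k+m}=T_k$. Two small remarks. First, your proviso about the scaling being genuinely timer-dependent is a real refinement of the statement, not just bookkeeping: for a constant $S_c$ the function $\tilde{S}_c$ is $h_c$-periodic for every dwell-time sequence, so the ``only if'' direction of the proposition as literally stated fails; an injectivity (or non-constancy on every subinterval) hypothesis on $S_c$ is what makes the reduction from periodicity of $\tilde{S}_c$ to periodicity of the sawtooth $\tau$ legitimate, and this is consistent with the paper's later use of constant scalings precisely when the periodicity condition cannot be met. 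Second, your detour through the minimal index-period $q$ with $q\mid m$ is harmless but unnecessary: once you have $t_m=h_c$ and $T_{k+m}=T_k$ for all $k$, you may simply take $q=m$ and $\alpha=1$, which already satisfies the conclusion $T_0+\cdots+T_{q-1}=h_c/\alpha$; the minimal-period version is a strictly stronger normal form than the proposition demands.
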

\begin{proof}
  The proof is immediate.
\end{proof}
Interestingly, the above result clearly states that the dwell-times need to be, at most, equal to the delay value with the equality holding in the limiting constant dwell-time case; i.e. $q=1$ and $\alpha=1$. This is an immediate consequence of the fact that the function $\tilde S_c$ is constructed by gluing different functions $S_c$ for different dwell-times values. In this regard, it is not possible to have dwell-time values that are strictly larger than the delay. If this is the case, which is likely in practice, then constant scalings will need to considered.

\blue{\begin{remark}[The case of time-varying delays]
  The above discussion illustrates why it is difficult in general to consider time-varying delays. First of all, note that the $L_1$-gain of the time-varying delay operator is equal to $(1-\mu)^{-1}$ where $\mu<1$ is the maximum rate of change of the delay; i.e. $\dot{h}(t)\le \mu$ almost everywhere. Secondly, when the delay is time-varying the commutation property of the time-dependent scaling is unlikely to hold, unless in some very specific scenarios. Note that the case of piecewise constant delays, which would be the simplest to consider, is automatically excluded because it would violate the fact that the derivative of the delay is bounded by $\mu$. In this regard, constant scalings will need to be considered in this case.
\end{remark}}

\subsection{Range dwell-time}

We first address the range dwell-time case, that is, the case where the dwell-time values $T_k$, $k\in\mathbb{Z}_{\ge0}$, belong to the interval $[\Tmin,\Tmax]$ where $0\le \Tmin\Tmax<\infty$. Stability and performance conditions are stated in the following result:
\begin{theorem}[Range dwell-time]\label{th:rdt_g}
Assume that the system \eqref{eq:posdel} is internally positive and that there exist a differentiable vector-valued function $\zeta:[0,\Tmax ]\mapsto\mathbb{R}^n$, $\zeta(0)>0$, a vector-valued function $\mu_c:[0,\Tmax ]\mapsto\mathbb{R}^{n_{c,\Delta}}$, a vector $\mu_d\in\mathbb{R}^{n_{d,\Delta}}$ and scalars $\epsilon,\gamma>0$ such that the conditions $\diag(\mu_c)\in\mathcal{S}_c$, $\diag(\mu_d)\in\mathcal{S}_d$,
\begin{equation}\label{eq:RangeDT1_2}
    \begin{bmatrix}
      \dot{\zeta}(\tau)\\
      0\\
      0
    \end{bmatrix}^T+\begin{bmatrix}
      \zeta(\tau)\\
      \mu_c(\tau)\\
      \mathds{1}
    \end{bmatrix}^T\begin{bmatrix}
      A(\tau) & G_c(\tau) & E_c(\tau)\\
      I & -I & 0\\
      C_c & H_c & F_c
    \end{bmatrix}\le\begin{bmatrix}
    0\\0\\ \gamma\mathds{1}
    \end{bmatrix}^T
  \end{equation}
  and
    \begin{equation}\label{eq:RangeDT2_2}
    \begin{bmatrix}
      -\zeta(\theta)\\
      0
    \end{bmatrix}^T+\begin{bmatrix}
      \zeta(0)\\
      \mathds{1}
    \end{bmatrix}^T\begin{bmatrix}
      J +G_d & E_d\\
      C_d+H_d & F_d
    \end{bmatrix}\le\begin{bmatrix}
      -\epsilon\mathds{1}^T\\ \gamma\mathds{1}
    \end{bmatrix}^T
  \end{equation}
hold for all $\tau\in[0,\Tmax ]$ and all $\theta\in[\Tmin ,\Tmax ]$. Then, the system \eqref{eq:posdel} is asymptotically stable under the range dwell-time condition $[\Tmin ,\Tmax ]$ for all delays $h_c\in\mathbb{R}_{>0}$ and $h_d\in\mathbb{Z}_{\ge 0}$. Moreover, the mapping $(w_c,w_d)\mapsto(z_c,z_d)$ has a hybrid $L_1/\ell_1$-gain of at most $\gamma$.
\end{theorem}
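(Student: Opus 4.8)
The plan is to derive Theorem~\ref{th:rdt_g} as a specialization of Theorem~\ref{th:rangeDT:general}. First I would invoke the reformulation established in the preliminaries: the delay system \eqref{eq:posdel} is exactly the LFT system \eqref{eq:mainsyst2} with the uncertainty channels fixed as $C_{c,\Delta}=C_{d,\Delta}=I$, $H_{c,\Delta}=H_{d,\Delta}=0$, $F_{c,\Delta}=F_{d,\Delta}=0$, with $\Delta_c$ the continuous shift $w_{c,\Delta}(t)=x(t-h_c)$ and $\Delta_d$ the discrete shift by $h_d$ samples. Two admissibility facts must then be checked. (i) The shift operators have unit $L_1$- and $\ell_1$-gains, so $\Delta_c\in\Deltabc$ and $\Delta_d\in\Deltabd$ irrespective of the magnitudes $h_c,h_d$; this is the classical property recalled in the text. (ii) Internal positivity of the LFT realization holds under the hypotheses, since with $C_{c,\Delta}=I$, $H_{c,\Delta}=0$ and the analogous discrete choices, the nonnegativity requirements of the positivity proposition reduce to $A(\tau)$ Metzler and $E_c,G_c,J,G_d,E_d,C_c,H_c,F_c,C_d,H_d,F_d$ nonnegative, which are precisely the stated assumptions.

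Under these substitutions, the continuous-time inequality \eqref{eq:RangeDT1} becomes verbatim \eqref{eq:RangeDT1_2}, the middle block row collapsing from $[\,C_{c,\Delta}\ \ H_{c,\Delta}-I\ \ F_{c,\Delta}\,]$ to $[\,I\ \ -I\ \ 0\,]$. Here the continuous scaling $\mu_c(\tau)$ must be retained as a decision variable constrained by $\diag(\mu_c)\in\mathcal{S}_c$: as the scaling propositions above show, a genuinely timer-dependent $S_c$ commutes with the constant-delay operator only under the $h_c$-periodicity condition, which a generic dwell-time sequence in $[\Tmin,\Tmax]$ does not satisfy, so the elimination available in the unconstrained case cannot be applied on the continuous-time channel.

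The discrete-time channel is where the two conditions diverge, and this is the step that drives the result. Because $\Delta_d$ is a pure shift, every constant positive diagonal matrix commutes with it, i.e. $S_d\Delta_d S_d^{-1}=\Delta_d$ for all $S_d\in\mathbb{D}_{\succ0}^{n}$, so the discrete scaling is \emph{unconstrained} and I can eliminate it exactly as in the proof of Theorem~\ref{th:rangeDT:free}. Expanding \eqref{eq:RangeDT2} under the substitutions, the second block column reads $\zeta(0)^TG_d-\mu_d^T+\mathds{1}^TH_d\le0$; since $H_{d,\Delta}-I=-I$ is trivially Metzler--Hurwitz with nonnegative inverse and the right-hand matrices are nonnegative, the smallest admissible scaling is the equality choice $\mu_d^T=\zeta(0)^TG_d+\mathds{1}^TH_d\ge0$. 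Substituting this into the first block column merges the $J$- and $G_d$-terms into $J+G_d$ and the $C_d$- and $H_d$-terms into $C_d+H_d$, while the third block column (where $\mu_d$ has coefficient $F_{d,\Delta}=0$) is unchanged, yielding exactly \eqref{eq:RangeDT2_2}. Asymptotic stability for all $\Delta_c\in\Deltabc$, $\Delta_d\in\Deltabd$ and the hybrid $L_1/\ell_1$-gain bound $\gamma$ then follow directly from the conclusion of Theorem~\ref{th:rangeDT:general}.

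The step I expect to require the most care is the asymmetric treatment of the two scalings: justifying rigorously why the discrete scaling may be removed while the continuous one cannot, and verifying that the minimal-scaling substitution preserves the direction of the inequality, which hinges on the nonnegativity of $G_d,H_d$ together with $\zeta(0)>0$ and $\mathds{1}>0$. Everything else — the unit gains of the shift operators and the positivity bookkeeping — is routine, and the delay-independence asserted in the statement is inherited automatically from the fact that the gains of $\Delta_c,\Delta_d$ do not depend on $h_c,h_d$.
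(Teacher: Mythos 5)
Your proposal is correct and follows essentially the same route as the paper's own proof: substitute the LFT representation of the delay system (with $C_{c,\Delta}=C_{d,\Delta}=I$, $H_{c,\Delta}=H_{d,\Delta}=0$, $F_{c,\Delta}=F_{d,\Delta}=0$) into Theorem~\ref{th:rangeDT:general}, keep the constrained continuous-time scaling, and eliminate the discrete-time scaling via the equality choice $\mu_d^T=\zeta(0)^TG_d+\mathds{1}^TH_d$, which merges the jump conditions into \eqref{eq:RangeDT2_2}. Your write-up is in fact slightly more careful than the paper's (which contains a typo in the formula for $\mu_d$ and does not spell out the unit-gain and positivity bookkeeping), so nothing further is needed.
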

\begin{proof}
The proof follows from substituting the matrices of the system \eqref{eq:posdel} into the conditions of Theorem \ref{th:rangeDT:general} to get
  \begin{equation}
    \begin{bmatrix}
      \dot{\zeta}(\tau)\\
      0\\
      0
    \end{bmatrix}^T+\begin{bmatrix}
      \zeta(\tau)\\
      \mu_c(\tau)\\
      \mathds{1}
    \end{bmatrix}^T\begin{bmatrix}
      A(\tau) & G_c(\tau) & E_c(\tau)\\
      I & -I & 0\\
      C_c & H_c & F_c
    \end{bmatrix}\le\begin{bmatrix}
    0\\0\\ \gamma\mathds{1}
    \end{bmatrix}^T
  \end{equation}
  and
    \begin{equation}
    \begin{bmatrix}
      -\zeta(\theta)\\
      0\\
      0
    \end{bmatrix}^T+\begin{bmatrix}
      \zeta(0)\\
      \mu_d\\
      \mathds{1}
    \end{bmatrix}^T\begin{bmatrix}
      J & G_d & E_d\\
      I & -I & 0\\
      C_d & H_d & F_d
    \end{bmatrix}\le\begin{bmatrix}
      -\epsilon\mathds{1}^T\\0\\ \gamma\mathds{1}
    \end{bmatrix}^T.
  \end{equation}
  In particular, we get that $$\zeta(0)^TG_d-\mu_d^T+\mathds{1}^TH_d\le0$$ and picking $\mu_d^T=\zeta(0)^TG_d-\mu_d^T+\mathds{1}^TH_d$ yields
   \begin{equation}
    \begin{bmatrix}
      -\zeta(\bar T)\\
      0
    \end{bmatrix}^T+\begin{bmatrix}
      \zeta(0)\\
      \mathds{1}
    \end{bmatrix}^T\begin{bmatrix}
      J +G_d & E_d\\
      C_d +H_d & F_d
    \end{bmatrix}\le\begin{bmatrix}
      -\epsilon\mathds{1}^T\\ \gamma\mathds{1}
    \end{bmatrix}^T
  \end{equation}
  and the result follows.
\end{proof}

%

\blue{The following result is the unconstrained scalings counterpart of Theorem \ref{th:rdt_g}:}
\begin{corollary}[Range dwell-time - Unconstrained scaling case]
  Assume that the system \eqref{eq:posdel} is internally positive, that the sequence of dwell-times is restricted to belong to
 \begin{equation}\label{eq:TRDT}
    \mathcal{T}_{RDT,p}:=\left\{\{T_0,T_1,\ldots\}\left|\begin{array}{c}
    T_{qi+k}=\beta_k\in[\Tmin,\Tmax],\sum_{i=0}^{q-1}T_i=h_c/\alpha,\\
    k\in\{0,\ldots,q-1\},(q,\alpha,i)\in\mathbb{Z}_{>0}^2\times\mathbb{Z}_{\ge0},h_c>0
    \end{array}\right.\right\}
  \end{equation}
    and that there exist a differentiable vector-valued function $\zeta:[0,\Tmax ]\mapsto\mathbb{R}^n$, $\zeta(0)>0$, and scalars $\epsilon,\gamma>0$ such that the conditions
\begin{equation}\label{eq:RangeDT1cor}
    \begin{bmatrix}
      \dot{\zeta}(\tau)\\
      0
    \end{bmatrix}^T+\begin{bmatrix}
      \zeta(\tau)\\
      \mathds{1}
    \end{bmatrix}^T\begin{bmatrix}
      A(\tau) + G_c(\tau) & E_c(\tau)\\
      C_c + H_c & F_c
    \end{bmatrix}\le\begin{bmatrix}
    0\\ \gamma\mathds{1}
    \end{bmatrix}^T
  \end{equation}
  and
    \begin{equation}\label{eq:RangeDT2cor}
    \begin{bmatrix}
      -\zeta(\theta)\\
      0
    \end{bmatrix}^T+\begin{bmatrix}
      \zeta(0)\\
      \mathds{1}
    \end{bmatrix}^T\begin{bmatrix}
      J +G_d & E_d\\
      C_d+H_d & F_d
    \end{bmatrix}\le\begin{bmatrix}
      -\epsilon\mathds{1}^T\\ \gamma\mathds{1}
    \end{bmatrix}^T
  \end{equation}
hold for all $\tau\in[0,\Tmax ]$ and all $\theta\in[\Tmin ,\Tmax ]$. Then, the system \eqref{eq:posdel} is asymptotically stable for all delays $h_c\in\mathbb{R}_{>0}$ and $h_d\in\mathbb{Z}_{\ge 0}$, and for all sequences of dwell-times in $\mathcal{T}_{RDT,p}$. Moreover, the map $(w_c,w_d)\mapsto(z_c,z_d)$ has a hybrid $L_1/\ell_1$-gain of at most $\gamma$.
\end{corollary}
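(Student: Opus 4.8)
The plan is to derive this corollary from Theorem~\ref{th:rdt_g} by eliminating the continuous-time scaling $\mu_c(\tau)$, in exactly the same spirit as the elimination of $\mu_d$ carried out inside the proof of Theorem~\ref{th:rdt_g} and of both scalings in Theorem~\ref{th:rangeDT:free}. The first step is to justify that an unconstrained, timer-dependent continuous scaling is admissible here. By the Proposition characterizing $h_c$-periodicity of $\tilde{S}_c$, the commutation identity $\tilde{S}_c\Delta_c\tilde{S}_c^{-1}=\Delta_c$ holds precisely when the dwell-time sequence has the periodic block structure with $\sum_{i=0}^{q-1}T_i=h_c/\alpha$ --- which is exactly the restriction $\mathcal{T}\in\mathcal{T}_{RDT,p}$ imposed in the statement. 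Under this restriction a freely chosen timer-dependent diagonal $\diag(\mu_c)$ satisfies $\diag(\mu_c)\in\mathcal{S}_c$, so the hypothesis of Theorem~\ref{th:rdt_g} can be met with any such $\mu_c$, and we are free to pick its most convenient value.

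Next I would specialize the LFT representation of \eqref{eq:posdel}, namely $C_{c,\Delta}=I$, $H_{c,\Delta}=0$ and $F_{c,\Delta}=0$, and inspect the uncertainty-channel (middle) block column of \eqref{eq:RangeDT1_2}, which reads $\zeta(\tau)^TG_c(\tau)-\mu_c(\tau)^T+\mathds{1}^TH_c\le 0$. Since we may take the smallest admissible scaling, I would force equality and set $\mu_c(\tau)^T=\zeta(\tau)^TG_c(\tau)+\mathds{1}^TH_c$. Because $\zeta(\tau)>0$ on the relevant interval and $G_c(\tau),H_c\ge 0$ by internal positivity, this $\mu_c(\tau)$ is componentwise nonnegative, and strictly positive whenever $G_c$ or $H_c$ contributes, so it is a genuine $D$-scaling in $\mathbb{D}_{\succ0}^{n_{c,\Delta}}$, exactly as in the proof of Theorem~\ref{th:rangeDT:free}.

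Substituting this expression for $\mu_c(\tau)^T$ back into the state and input block columns of \eqref{eq:RangeDT1_2} collapses the two state blocks $A(\tau)$ and $G_c(\tau)$ into $A(\tau)+G_c(\tau)$ and the two output blocks $C_c$ and $H_c$ into $C_c+H_c$, while the input column is untouched and reproduces the $\le\gamma\mathds{1}^T$ bound; this yields precisely \eqref{eq:RangeDT1cor}. The discrete-time condition requires no further work: the proof of Theorem~\ref{th:rdt_g} has already eliminated $\mu_d$ and reduced the discrete block to \eqref{eq:RangeDT2_2}, which is verbatim \eqref{eq:RangeDT2cor}. Asymptotic stability for all $h_c\in\mathbb{R}_{>0}$, $h_d\in\mathbb{Z}_{\ge 0}$, and all sequences in $\mathcal{T}_{RDT,p}$, together with the hybrid $L_1/\ell_1$-gain bound $\gamma$, then follow directly from the conclusion of Theorem~\ref{th:rdt_g}.

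I expect the main obstacle to be not the algebraic substitution, which is routine once the block structure is written out, but the justification that the elimination is legitimate, i.e. that a timer-dependent $\mu_c(\tau)$ may be chosen freely. This is exactly where the periodicity hypothesis $\mathcal{T}_{RDT,p}$ enters: without it the commutation constraint $\diag(\mu_c)\in\mathcal{S}_c$ would forbid a general timer-dependent scaling, and the reduction to the zero-delay (worst-case) system with matrices $A(\tau)+G_c(\tau)$ and $C_c+H_c$ would fail. A secondary point deserving care is confirming that $\mu_c(\tau)$ is strictly positive, so that the chosen scaling genuinely lies in $\mathbb{D}_{\succ0}^{n_{c,\Delta}}$ rather than merely on its boundary; this rests on $\zeta(\tau)$ remaining positive over $[0,\Tmax]$, which is the costate positivity inherited from the internal positivity of the system.
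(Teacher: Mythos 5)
Your proposal is correct and follows essentially the same route as the paper, whose proof simply reads ``follows from simple substitutions as in the proof of Theorem~\ref{th:rangeDT:free}'': you perform exactly those substitutions, choosing $\mu_c(\tau)^T=\zeta(\tau)^TG_c(\tau)+\mathds{1}^TH_c$ (the specialization of the formula in Theorem~\ref{th:rangeDT:free} to $H_{c,\Delta}=0$) so that the conditions of Theorem~\ref{th:rdt_g} collapse to \eqref{eq:RangeDT1cor}--\eqref{eq:RangeDT2cor}. Your additional justification that the restriction to $\mathcal{T}_{RDT,p}$ is what legitimizes a freely chosen timer-dependent scaling, via the $h_c$-periodicity proposition, makes explicit a point the paper leaves implicit and is exactly the intended role of that hypothesis.
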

\begin{proof}
This follows from simple substitutions as in the proof of Theorem \ref{th:rangeDT:free}.
\end{proof}

\subsection{Minimum dwell-time}

We now address the minimum dwell-time case, that is, the case where the dwell-time values $T_k$, $k\in\mathbb{Z}_{\ge0}$, belong to the interval $[\bar T,\infty)$ where $0<\bar T$. Stability and performance conditions are stated in the following result:
\begin{theorem}[Minimum dwell-time]\label{th:mdt_g}
Assume that the system \eqref{eq:posdel} is internally positive and that the matrices of the system \eqref{eq:mainsyst2} are such that they remain constant for all values $\tau\ge\bar T$. Assume further that there exist a differentiable vector-valued function $\zeta:[0,\bar T]\mapsto\mathbb{R}^n$, $\zeta(0)>0$, $\zeta(\bar T)>0$, a vector-valued function $\mu_c:[0,\bar T]\mapsto\mathbb{R}^{n_{c,\Delta}}$ such $\mu_c(\tau)=\mu_c(\bar T)$ for all $\tau\ge0$ and $\diag(\mu_c)\in\mathcal{S}_c$, and scalars $\epsilon,\gamma>0$ such that the conditions
\begin{equation}\label{eq:MinDT0}
    \begin{bmatrix}
      \zeta(\bar T)\\
      \mu_c(\bar T)\\
      \mathds{1}
    \end{bmatrix}^T\begin{bmatrix}
      A(\bar T) & G_c(\bar T) & E_c(\bar T)\\
      I & -I & 0\\
      C_c & H_c & F_c
    \end{bmatrix}\le\begin{bmatrix}
    -\epsilon\mathds{1}\\0\\ \gamma\mathds{1}
    \end{bmatrix}^T,
  \end{equation}
\begin{equation}\label{eq:MinDT1}
    \begin{bmatrix}
      \dot{\zeta}(\tau)\\
      0\\
      0
    \end{bmatrix}^T+\begin{bmatrix}
      \zeta(\tau)\\
      \mu_c(\tau)\\
      \mathds{1}
    \end{bmatrix}^T\begin{bmatrix}
      A(\tau) & G_c(\tau) & E_c(\tau)\\
      I & -I & 0\\
      C_c & H_c & F_c
    \end{bmatrix}\le\begin{bmatrix}
    0\\0\\ \gamma\mathds{1}
    \end{bmatrix}^T
  \end{equation}
  and
    \begin{equation}\label{eq:MinDT2}
    \begin{bmatrix}
      -\zeta(\bar T)\\
      0
    \end{bmatrix}^T+\begin{bmatrix}
      \zeta(0)\\
      \mathds{1}
    \end{bmatrix}^T\begin{bmatrix}
      J +G_d & E_d\\
      C_d+H_d & F_d
    \end{bmatrix}\le\begin{bmatrix}
      -\epsilon\mathds{1}^T\\  \gamma\mathds{1}
    \end{bmatrix}^T
  \end{equation}
hold for all $\tau\in[0,\bar T]$. Then, the system \eqref{eq:posdel} is asymptotically stable under the minimum dwell-time condition $\bar T$ for all delays $h_c\in\mathbb{R}_{>0}$ and $h_d\in\mathbb{Z}_{\ge 0}$. Moreover, the mapping $(w_c,w_d)\mapsto(z_c,z_d)$ has a hybrid $L_1/\ell_1$-gain of at most $\gamma$.
\end{theorem}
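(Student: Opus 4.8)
The plan is to reduce Theorem~\ref{th:mdt_g} to the already-proved general minimum dwell-time result, Theorem~\ref{th:minimumDT:general}, by substituting the LFT representation of the delay system~\eqref{eq:posdel} into the abstract conditions and then absorbing the discrete-time scaling in closed form, exactly as was done for the range dwell-time case in Theorem~\ref{th:rdt_g}. First I would recall that the delay system is rewritten in the form~\eqref{eq:mainsyst2} with the specific choices $C_{c,\Delta}=C_{d,\Delta}=I$, $H_{c,\Delta}=H_{d,\Delta}=0$, $F_{c,\Delta}=F_{d,\Delta}=0$, and that the continuous- and discrete-time delay operators both have unit $L_1$- and $\ell_1$-gain, so they lie in $\Deltabc$ and $\Deltabd$ respectively. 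With these substitutions the middle block-row $\begin{bmatrix} C_{c,\Delta} & H_{c,\Delta}-I & F_{c,\Delta}\end{bmatrix}$ of~\eqref{eq:minDT1}--\eqref{eq:minDT2} becomes $\begin{bmatrix} I & -I & 0\end{bmatrix}$, which is precisely the middle row appearing in conditions~\eqref{eq:MinDT0} and~\eqref{eq:MinDT1}. This shows that the first two conditions of Theorem~\ref{th:mdt_g} are nothing but the specialization of~\eqref{eq:minDT1}--\eqref{eq:minDT2}.

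Next I would handle the discrete-time jump condition~\eqref{eq:minDT3}. After substitution it reads, block-row by block-row, $\zeta(0)^T J - \zeta(\theta)^T + \mu_d^T C_d + \mathds{1}^T C_d \le -\epsilon\mathds{1}^T$ in the state channel, $\zeta(0)^T G_d - \mu_d^T + \mathds{1}^T H_d \le 0$ in the uncertainty channel, and $\zeta(0)^T E_d + \mathds{1}^T F_d \le \gamma\mathds{1}^T$ in the performance channel. The key manipulation is identical to the one in the proof of Theorem~\ref{th:rdt_g}: the uncertainty-channel inequality forces the optimal (smallest admissible) discrete scaling to satisfy $\mu_d^T = \zeta(0)^T G_d + \mathds{1}^T H_d$, and since this quantity is nonnegative (because $G_d,H_d\ge0$ and $\zeta(0)>0$) it defines a legitimate diagonal scaling $\diag(\mu_d)\in\mathcal{S}_d$; note the discrete delay operator commutes trivially with any constant diagonal scaling. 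Substituting this value of $\mu_d$ back into the state and performance channels collapses the jump condition to~\eqref{eq:MinDT2}, with $J+G_d$ and $C_d+H_d$ appearing in place of the separate blocks, which is exactly the stated inequality (using $\theta=\bar T$ for the relevant evaluation point).

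The one point requiring genuine care is the continuous-time scaling commutation constraint $\diag(\mu_c)\in\mathcal{S}_c$, which is why the theorem imposes the extra hypothesis $\mu_c(\tau)=\mu_c(\bar T)$ for all $\tau\ge0$. For the continuous delay operator, the commutation property $S_c\circ\Delta_c=\Delta_c\circ S_c$ fails in general for a genuinely timer-varying scaling unless the periodicity condition established in the Scalings subsection holds; restricting $\mu_c$ to be constant makes $\tilde S_c$ time-invariant, hence trivially $h_c$-periodic, so the commutation holds for every delay value $h_c>0$ and every admissible dwell-time sequence. I would explicitly invoke this to certify $\diag(\mu_c)\in\mathcal{S}_c$, thereby discharging the only nontrivial hypothesis of Theorem~\ref{th:minimumDT:general}. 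With all three conditions and the scaling-membership requirements verified, Theorem~\ref{th:minimumDT:general} applies verbatim and yields asymptotic stability together with the hybrid $L_1/\ell_1$-gain bound $\gamma$ for all $h_c\in\mathbb{R}_{>0}$ and $h_d\in\mathbb{Z}_{\ge0}$, completing the argument. The main obstacle is thus not computational but conceptual: ensuring that the constant-scaling restriction is genuinely what licenses the commutation with the non-memoryless delay operator, and that the closed-form elimination of $\mu_d$ in the jump channel remains valid (i.e.\ yields a positive scaling) under the positivity hypotheses.
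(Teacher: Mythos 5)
Your proposal is correct and follows exactly the route the paper intends: the paper states Theorem~\ref{th:mdt_g} without an explicit proof, but the argument is precisely the one used for its range dwell-time twin, Theorem~\ref{th:rdt_g} --- substitute $C_{c,\Delta}=C_{d,\Delta}=I$, $H_{c,\Delta}=H_{d,\Delta}=0$, $F_{c,\Delta}=F_{d,\Delta}=0$ into Theorem~\ref{th:minimumDT:general}, eliminate $\mu_d$ via the choice $\mu_d^T=\zeta(0)^TG_d+\mathds{1}^TH_d$ to collapse the jump condition into \eqref{eq:MinDT2}, and use constancy of $\mu_c$ to secure commutation with the delay operator. Your treatment of the commutation issue is, if anything, more explicit than the paper's (which buries it in the hypothesis $\mu_c(\tau)=\mu_c(\bar T)$), and the only blemishes are cosmetic: the stray factor $C_d$ in your state-channel expression (it should be $\mu_d^T C_{d,\Delta}=\mu_d^T$), and the fact that $\zeta(0)^TG_d+\mathds{1}^TH_d$ is only guaranteed nonnegative rather than strictly positive, a technicality the paper's own proof of Theorem~\ref{th:rdt_g} also glosses over.
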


\blue{The following result is the unconstrained scalings counterpart of Theorem \ref{th:mdt_g}:}
\begin{corollary}[Minimum dwell-time - Unconstrained scalings case]
  Assume that the system \eqref{eq:posdel} is internally positive, that the sequence of dwell-times is restricted to
 \begin{equation}\label{eq:TMDT}
     \mathcal{T}_{MDT,p}:=\left\{\{T_0,T_1,\ldots\}\left|\begin{array}{c}
    T_{qi+k}=\beta_k\ge\bar T,\sum_{i=0}^{q-1}T_i=h_c/\alpha,\\
      k\in\{0,\ldots,q-1\},(q,\alpha,i)\in\mathbb{Z}_{>0}^2\times\mathbb{Z}_{\ge0},h_c>0
    \end{array}\right.\right\}
  \end{equation}
  and that there exist a differentiable vector-valued function $\zeta:[0,\bar T ]\mapsto\mathbb{R}^n$, $\zeta(0)>0$, a vector-valued function $\mu_c:[0,\bar T]\mapsto\mathbb{R}^{n_{c,\Delta}}$, a vector $\mu_d\in\mathbb{R}^{n_{d,\Delta}}$ and scalars $\epsilon,\gamma>0$ such that the conditions
  \begin{equation}
  \begin{bmatrix}
      \zeta(\bar T)\\
      \mathds{1}
    \end{bmatrix}^T\begin{bmatrix}
      A(\bar T) + G_c(\bar T) & E_c(\bar T)\\
      C_c + H_c & F_c
    \end{bmatrix}\le\begin{bmatrix}
    \epsilon \mathds{1}\\ \gamma\mathds{1}
    \end{bmatrix}^T,
  \end{equation}
\begin{equation}
    \begin{bmatrix}
      \dot{\zeta}(\tau)\\
      0
    \end{bmatrix}^T+\begin{bmatrix}
      \zeta(\tau)\\
      \mathds{1}
    \end{bmatrix}^T\begin{bmatrix}
      A(\tau) + G_c(\tau) & E_c(\tau)\\
      C_c + H_c & F_c
    \end{bmatrix}\le\begin{bmatrix}
    0\\ \gamma\mathds{1}
    \end{bmatrix}^T
  \end{equation}
  and
    \begin{equation}
    \begin{bmatrix}
      -\zeta(\theta)\\
      0
    \end{bmatrix}^T+\begin{bmatrix}
      \zeta(0)\\
      \mathds{1}
    \end{bmatrix}^T\begin{bmatrix}
      J +G_d & E_d\\
      C_d+H_d & F_d
    \end{bmatrix}\le\begin{bmatrix}
      -\epsilon\mathds{1}^T\\ \gamma\mathds{1}
    \end{bmatrix}^T
  \end{equation}
hold for all $\tau\in[0,\bar T]$. Then, the system \eqref{eq:posdel} is asymptotically stable for all delays $h_c\in\mathbb{R}_{>0}$ and $h_d\in\mathbb{Z}_{\ge 0}$, and for all sequences of dwell-times in $\mathcal{T}_{MDT,p}$. Moreover, the mapping $(w_c,w_d)\mapsto(z_c,z_d)$ has a hybrid $L_1/\ell_1$-gain of at most $\gamma$.
\end{corollary}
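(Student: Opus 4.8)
The plan is to derive this corollary from Theorem \ref{th:mdt_g} by eliminating the remaining continuous-time scaling $\mu_c$, mirroring exactly the passage from Theorem \ref{th:minimumDT:general} to Theorem \ref{th:minimumDT:free}. The starting observation is that for the delay system \eqref{eq:posdel} the uncertainty channels take the particularly simple form $C_{c,\Delta}=C_{d,\Delta}=I$, $H_{c,\Delta}=H_{d,\Delta}=0$ and $F_{c,\Delta}=F_{d,\Delta}=0$, so that the block $H_{c,\Delta}-I$ appearing in the middle row of \eqref{eq:MinDT0} and \eqref{eq:MinDT1} is simply $-I$. This is precisely the structure that renders the scaling elimination of Theorem \ref{th:minimumDT:free} completely explicit.

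First I would justify that a genuinely timer-varying $\mu_c$ is admissible on the restricted dwell-time family $\mathcal{T}_{MDT,p}$. By the periodicity Proposition, $\diag(\mu_c)\in\mathcal{S}_c$ holds for a non-constant $\mu_c$ if and only if the glued scaling $\tilde S_c$ is $h_c$-periodic, and by the subsequent Proposition this is equivalent to the dwell-time sequence repeating with period-sum $h_c/\alpha$ -- which is exactly how $\mathcal{T}_{MDT,p}$ in \eqref{eq:TMDT} is defined. Hence on this family the commutation constraint is met and Theorem \ref{th:mdt_g} is applicable.

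Next I would eliminate $\mu_c$; note that the discrete condition \eqref{eq:MinDT2} of Theorem \ref{th:mdt_g} is already in eliminated form (its matrix is the merged $J+G_d$, $C_d+H_d$), so it transfers verbatim as the jump condition of the corollary. Reading off the middle (uncertainty-channel) block column of \eqref{eq:MinDT0}--\eqref{eq:MinDT1}, feasibility forces $\zeta(\tau)^TG_c(\tau)-\mu_c(\tau)^T+\mathds{1}^TH_c\le0$; since the positive scaling enters with the sign of $-I$, it is optimal to take the smallest admissible value, i.e. the equality $\mu_c(\tau)^T=\zeta(\tau)^TG_c(\tau)+\mathds{1}^TH_c$, which is nonnegative by the positivity hypotheses on $G_c$ and $H_c$. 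Substituting this back into the state block column collapses $A(\tau)$ with $G_c(\tau)$ into $A(\tau)+G_c(\tau)$ and $C_c$ with $H_c$ into $C_c+H_c$, while the external-input column passes through unchanged because $F_{c,\Delta}=0$; this produces the two continuous conditions of the corollary, with \eqref{eq:MinDT0} yielding the boundary condition at $\tau=\bar T$ and \eqref{eq:MinDT1} the transient one on $[0,\bar T]$. The resulting worst-case system is exactly the zero-delay system, so stability and the hybrid $L_1/\ell_1$-gain bound $\gamma$ are certified for every $h_c\in\mathbb{R}_{>0}$ and $h_d\in\mathbb{Z}_{\ge0}$.

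The step I expect to require the most care is not the algebra but the admissibility argument of the second paragraph: one must verify that the scaling reconstructed by the elimination, namely $\mu_c(\tau)=\zeta(\tau)^TG_c(\tau)+\mathds{1}^TH_c$, genuinely lies in $\mathcal{S}_c$, i.e. that the induced $\tilde S_c$ is $h_c$-periodic. This is guaranteed only because the dwell-time sequence is drawn from $\mathcal{T}_{MDT,p}$; without that restriction the eliminated scaling need not commute with $\Delta_c$ and the reduction to the zero-delay system would fail. The remaining technical points -- invertibility of $H_{c,\Delta}-I=-I$ and nonnegativity of its inverse -- are trivial here precisely because the delay channel has $H_{c,\Delta}=0$.
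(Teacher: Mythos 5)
Your proposal is correct and follows essentially the same route as the paper: the paper's (implicit) proof is exactly the ``simple substitutions'' scaling-elimination of Theorem \ref{th:rangeDT:free}/\ref{th:minimumDT:free} applied to the delay structure $C_{c,\Delta}=I$, $H_{c,\Delta}=0$, $F_{c,\Delta}=0$, with the restriction to $\mathcal{T}_{MDT,p}$ justifying, via the periodicity propositions, that a timer-dependent scaling commutes with the delay operator. Your only (immaterial) deviation is the order of operations --- you specialize to the delay system first (Theorem \ref{th:mdt_g}) and then eliminate $\mu_c(\tau)^T=\zeta(\tau)^TG_c(\tau)+\mathds{1}^TH_c$, whereas the paper eliminates the scalings in the general LFT result and then substitutes the delay matrices; the algebra is identical in both orders.
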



\section{Interval observation of linear impulsive systems with delays}\label{sec:IIimp}

Let us consider here the following class of linear impulsive systems with delays
\begin{equation}\label{eq:linimpdel}
\begin{array}{rcl}
  \dot{x}(t)&=&Ax(t)+G_cx(t-h_c)+E_cw_c(t),\ t\ne t_k\\
  x(t_k^+)&=&Jx(t_k)+G_dx(t_{k-h_d})+E_dw_d(k),\ k\in\mathbb{Z}_{\ge 1}\\
  y_c(t)&=&C_{yc}x(t)+H_{yc}x(t-h_c)+F_{yc}w_c(t)\\
  y_d(k)&=&C_{yd}x(t_k)+H_{yd}x(t_{k-h_d})+F_{yd}w_d(k)\\
  %
x(s)&=&\phi_0(s),s\in[-h_c,0]
\end{array}
\end{equation}
where$x\in\mathbb{R}^n$, $\phi_0\in C([-h_c,0],\mathbb{R}^n)$, $w_c\in\mathbb{R}^{p_c}$, $w_d\in\mathbb{R}^{p_d}$, $y_c\in\mathbb{R}^{r_c}$ and $y_d\in\mathbb{R}^{r_d}$ are the state of the system, the functional initial condition, the continuous-time exogenous input, the discrete-time exogenous input, the continuous-time measured output and the discrete-time measured output, respectively. The input signals are all assumed to be bounded functions and that some bounds are known; i.e. we have $w_c^-(t)\le w_c(t)\le w_c^+(t)$ and $w_d^-(k)\le w_d(k)\le w_d^+(k)$ for all $t\ge0$ and $k\ge0$ and for some known $w_c^-(t), w_c^+(t),w_d^-(k),w_d^+(k)$.

\subsection{Proposed interval observer}

We are interested in finding an interval-observer of the form
\begin{equation}\label{eq:obs}
\begin{array}{lcl}
      \dot{x}^\bullet(t)&=&Ax^\bullet(t)+G_cx^\bullet(t-h_c)+E_c w_c^\bullet(t)\\
      &&+L_c(t)(y_c(t)-C_{yc} x^\bullet(t)-H_{yc}x^\bullet(t-h_c)-F_{yc} w_c^\bullet(t))\\
      x^\bullet(t_k^+)&=&Jx^\bullet(t_k)+G_dx^\bullet(t_{k-h_d})+E_d w_d^\bullet(t)\\
&&\quad+L_d(y_d(k)-C_{yd} x^\bullet(t_k)-H_{yd} x^\bullet(t_{k-h_d})-F_{yd} w_d^\bullet(t))\\
      x^\bullet(s)&=&\phi_0^\bullet(s),s\in[-h_c,0]
\end{array}
\end{equation}
where $\bullet\in\{-,+\}$. Above, the observer with the superscript ``$+$'' is meant to estimate an upper-bound on the state value whereas the observer with the superscript ``-'' is meant to estimate a lower-bound, i.e. $x^-(t)\le x(t)\le x^+(t)$ for all $t\ge0$ provided that $\phi_0^-\le \phi_0\le \phi_0^+$, $w_c^-(t)\le w_c(t)\le w_c^+(t)$ and $w_d^-(k)\le w_d(k)\le w_d^+(k)$. The errors dynamics $e^+(t):=x^+(t)-x(t)$ and $e^-(t):=x(t)-x^-(t)$ are then described by
\begin{equation}\label{eq:error}
\begin{array}{rcl}
    \dot{e}^\bullet(t)&=&(A-L_c(t)C_{yc})e^\bullet(t)+(G_c-L_c(t)C_{yc})e^\bullet(t-h_c)+(E_c-L_c(t) F_{yc})\delta_c^\bullet(t)\\
    e^\bullet(t_k^+)&=&(J-L_d C_{yd})e^\bullet(t_k)+(G_d-L_d H_{yd})e^\bullet(t_{k-h_d})+(E_d-L_d F_{yd})\delta_d^\bullet(k)\\
    e_c^\bullet(t)&=&M_ce^\bullet(t)\\
    e_d^\bullet(k)&=&M_de^\bullet(t_k)\\
    e^\bullet(s)&=&\phi_{e,0}^\bullet(s),s\in[-h_c,0]
\end{array}
\end{equation}
where $\bullet\in\{-,+\}$, $\delta_c^+(t):=w_c^+(t)-w_c(t)\in\mathbb{R}_{\ge0}^{p_c}$, $\delta_c^-(t):=w_c(t)-w_c^-(t)\in\mathbb{R}_{\ge0}^{p_c}$, $\delta_d^+(k):=w_d^+(k)-w_d(k)\in\mathbb{R}_{\ge0}^{p_d}$ and $\delta_d^-(k):=w_d(k)-w_d^-(k)\in\mathbb{R}_{\ge0}^{p_d}$. The continuous-time and discrete-time performance outputs are denoted by  $e_c^\bullet(t)$ and $e_d^\bullet(k)$, respectively. The initial conditions are defined as $\phi_{e,0}^+:=\phi_0^+-\phi_0$ and $\phi_{e,0}^-:=\phi_0-\phi_0^-$. Note that both errors have exactly the same dynamics and, consequently, it is unnecessary here to consider different observer gains. Note that this would not be the case if the observers were coupled in a non-symmetric way. The matrices $M_c,M_d\in\mathbb{R}^{n\times n}_{\ge0}$ are nonzero weighting matrices that are needed to be chosen a priori.

\subsection{Range dwell-time result}

In the range-dwell -time case, the time-varying gain $L_c(t)$ in \eqref{eq:obs} is defined as follows
\begin{equation}\label{eq:L1}
  L_c(t)=\tilde{L}_c(t-t_k),\ t\in(t_k,t_{k+1}]
\end{equation}
where $\tilde{L}_c:[0,\Tmax ]\mapsto\mathbb{R}^{n\times q_c}$ is a matrix-valued function to be determined. The rationale for considering such structure is to allow for the derivation of convex design conditions. The observation problem is defined, in this case, as follows:
\begin{problem}\label{problem1}
Find an interval observer of the form \eqref{eq:obs} (i.e. a matrix-valued function $L_c(\cdot)$ of the form \eqref{eq:L1} and a matrix $L_d\in\mathbb{R}^{n\times q_d}$) such that the error dynamics \eqref{eq:error} is
  \begin{enumerate}[(a)]
    \item state-positive, that is
    \begin{itemize}
      \item $A-\tilde L_c(\tau)  C_c$ is Metzler for all $\tau\in[0,\Tmax ]$,
      \item $G_c-\tilde L_c(t)C_{yc}$ and $E_c-\tilde L_c(t) F_{yc}$ are nonnegative for all $\tau\in[0,\Tmax ]$,
      \item $J-L_dC_d$, $E_d-L_dF_d$ and $G_d-L_d H_{yd}$ are nonnegative,
    \end{itemize}
    \item asymptotically stable under range dwell-time $[\Tmin ,\Tmax ]$ when $w_c\equiv0$ and $w_d\equiv0$, and
    \item the map
    \begin{equation}
      (\delta_c^\bullet,\delta_d^\bullet)\mapsto(e_c^\bullet,e_d^\bullet)
    \end{equation}
    has a hybrid $L_1/\ell_1$-gain of at most $\gamma$.
  \end{enumerate}
\end{problem}

The following result provides a sufficient condition for the solvability of Problem \ref{problem1}:
\begin{theorem}\label{th:1}\label{th:RDTinterval}
Assume that there exist a differentiable matrix-valued function $X:[0,\Tmax ]\mapsto\mathbb{D}^n$, $X(0)\succ0$, a matrix-valued function $Y_c:[0,\Tmax ]\mapsto\mathbb{R}^{n\times q_c}$, matrices $Y_d\in\mathbb{R}^{n\times q_d}$, $U_c\in\mathbb{D}^{n}_{\succ0}$ and scalars $\eps,\alpha,\gamma>0$ such that the conditions
\begin{subequations}\label{eq:th1a}
\begin{alignat}{4}
            X(\tau)A-Y_c(\tau)C_{yc}+\alpha I_n&\ge0\label{eq:th1:1}\\
            X(\tau)G_c-Y_c(\tau)H_{yc}&\ge0\label{eq:th1:2}\\
            X(\tau)E_c-Y_c(\tau)F_c&\ge0\label{eq:th1:3}\\
            X(0) J-Y_d C_{yd}&\ge0\label{eq:th1:4}\\
            X(0) G_d-Y_d H_{yd}&\ge0\label{eq:th1:5}\\
            X(0)E_d-Y_d F_d&\ge0\label{eq:th1:6}
  \end{alignat}
\end{subequations}
           and
 \begin{subequations}\label{eq:th1b}
\begin{alignat}{4}
           \mathds{1}^T\left[\dot{X}(\tau)+X(\tau)A-Y_c(\tau)C_{yc}+U_c\right]+\mathds{1}^TM_c&\le0\label{eq:th1:5}\\
           \mathds{1}^T\left[X(\tau)G_c-Y_c(\tau)H_{yc}-U_c\right]&\le0\label{eq:th1:5}\\
            \mathds{1}^T\left[X(\tau)E_c-Y_c(\tau)F_c\right]-\gamma \mathds{1}^T&\le0\label{eq:th1:7}\\
            \mathds{1}^T\left[X(0)(J+G_d)-Y_d(C_{yd}+H_{yd})-X(\theta)+X(0)+\eps I\right]+\mathds{1}^TM_d&\le0\label{eq:th1:6}\\
            \mathds{1}^T\left[X(0)E_d-Y_d F_d\right]-\gamma \mathds{1}^T&\le0\label{eq:th1:8}
             \end{alignat}
\end{subequations}
%
%
        %
hold for all $\tau\in[0,\Tmax ]$ and all $\theta\in[\Tmin ,\Tmax ]$. Then, there exists an interval observer of the form \eqref{eq:obs}-\eqref{eq:L1} that solves Problem \ref{problem1} and suitable observer gains are given by
\begin{equation}\label{eq:formula1}
  \tilde{L}_c(\tau)= X(\tau)^{-1}Y_c(\tau)\quad \textnormal{and}\quad L_d=X(0)^{-1}Y_d.
\end{equation}
\end{theorem}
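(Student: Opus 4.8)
The plan is to recognize that this theorem is the synthesis (observer-design) counterpart of the analysis result \ref{th:rdt_g}, obtained through a linearizing change of variables. First I would observe that the error dynamics \eqref{eq:error}, for either sign $\bullet\in\{-,+\}$, is precisely a linear impulsive positive system with delays of the form \eqref{eq:posdel}, with continuous-time matrices $A-\tilde L_c(\tau)C_{yc}$, $G_c-\tilde L_c(\tau)H_{yc}$, $E_c-\tilde L_c(\tau)F_{yc}$, discrete-time matrices $J-L_dC_{yd}$, $G_d-L_dH_{yd}$, $E_d-L_dF_{yd}$, and performance-output matrices $C_c=M_c$, $C_d=M_d$ with all remaining output and feedthrough matrices equal to zero. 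Under this identification, requirement (a) of Problem \ref{problem1} is exactly the internal positivity of this error system, and requirements (b)--(c) are exactly the hypotheses of the range-dwell-time analysis result \ref{th:rdt_g}; so it suffices to re-express both sets of requirements in terms of the decision variables of the theorem.

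For the positivity part, the idea is to take the diagonal positive certificate $X(\tau):=\diag(\zeta(\tau))$ and to left-multiply the Metzler/nonnegativity requirements of (a) by $X(\tau)$. Since $X(\tau)$ is diagonal with strictly positive entries, this multiplication leaves every sign pattern unchanged: $X(\tau)\big(A-\tilde L_c(\tau)C_{yc}\big)$ is Metzler iff $A-\tilde L_c(\tau)C_{yc}$ is, and $X(\tau)\big(G_c-\tilde L_c(\tau)H_{yc}\big)\ge0$ iff $G_c-\tilde L_c(\tau)H_{yc}\ge0$, and likewise for the other matrices. Introducing the change of variables $Y_c(\tau):=X(\tau)\tilde L_c(\tau)$ and $Y_d:=X(0)L_d$ then turns the Metzler condition into $X(\tau)A-Y_c(\tau)C_{yc}+\alpha I_n\ge0$ --- the scalar $\alpha$ absorbing the sign-free diagonal --- and the remaining nonnegativity conditions into the rest of \eqref{eq:th1a}.

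For stability and performance, I would apply Theorem \ref{th:rdt_g} to the error system with $\zeta(\tau)^T=\mathds{1}^TX(\tau)$ and with the continuous-time scaling chosen as $\mu_c(\tau)^T=\mathds{1}^TU_c$, i.e. $\diag(\mu_c)=U_c\in\mathcal S_c$ (the discrete scaling having already been eliminated in the combined jump condition \eqref{eq:RangeDT2_2}). Substituting the closed-loop error matrices into \eqref{eq:RangeDT1_2}--\eqref{eq:RangeDT2_2} and applying the same change of variables collapses every bilinear product $\mathds{1}^TX(\tau)\tilde L_c(\tau)$ and $\mathds{1}^TX(0)L_d$ into the affine terms $\mathds{1}^TY_c(\tau)$ and $\mathds{1}^TY_d$, producing exactly the inequalities \eqref{eq:th1b}. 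Because $X(\tau)$ is diagonal and positive definite it is invertible, so the change of variables is lossless and the gains are recovered as $\tilde L_c(\tau)=X(\tau)^{-1}Y_c(\tau)$ and $L_d=X(0)^{-1}Y_d$, which is \eqref{eq:formula1}.

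The hard part is not any single computation but making the linearization simultaneously lossless across both the positivity constraints (a) and the stability/performance constraints (b)--(c): the synthesis inequalities are genuinely bilinear in $(X,\tilde L_c,L_d)$, and the only reason the single substitution $Y_c=X\tilde L_c$, $Y_d=X(0)L_d$ works for all of them at once is the restriction that $X$ be diagonal and positive. This is what lets left-multiplication by $X$ \emph{preserve} the Metzler/nonnegativity structure rather than destroy it (as a full congruence would), and what makes the variable change invertible. The one technical point I would verify carefully is that the diagonal entries $\chi_i(\tau)$ remain strictly positive over the whole interval $[0,\Tmax]$, and not merely at $\tau=0$, so that $X(\tau)^{-1}$ --- hence $\tilde L_c(\tau)$ --- is well defined throughout; this should follow from the continuous-time inequality of \eqref{eq:th1b} together with $X(0)\succ0$, but it is the place where the argument requires the most attention.
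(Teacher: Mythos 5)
Your proposal is correct and follows essentially the same route as the paper's own proof: the identification $\zeta(\tau)^T=\mathds{1}^TX(\tau)$, the linearizing substitutions $Y_c(\tau)=X(\tau)\tilde L_c(\tau)$ and $Y_d=X(0)L_d$, and the reduction of the synthesis conditions to the range dwell-time analysis result (the paper invokes Theorem \ref{th:rangeDT:general}, you invoke its delay specialization Theorem \ref{th:rdt_g} with the constant scaling $\diag(\mu_c)=U_c$ --- the same argument, since the error system is exactly of the form \eqref{eq:posdel}). Your write-up is in fact more careful than the paper's three-sentence proof, e.g. in making explicit that $U_c$ plays the role of the constant continuous-time scaling (hence commutes with the delay operator) and in stating the change of variables in the correct direction, whereas the paper's proof misprints it as $Y_c(\tau)=X(\tau)^{-1}\tilde L_c(\tau)$.
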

\blue{\begin{proof}
  First note that with the changes of variables $\zeta(\tau)^T=:\mathds{1}^TX(\tau)$, the conditions \eqref{eq:th1a} are exactly the positivity conditions for the error dynamics \eqref{eq:error}. Secondly, the conditions \eqref{eq:th1b} coincide with the stability conditions of Theorem \ref{th:rangeDT:general} with the changes of variables $Y_c(\tau)=X(\tau)^{-1}\tilde{L}_c(\tau)$ and $Y_d=X(0)^{-1}L_d$. This proves the result.
\end{proof}}
\blue{The following result is the unconstrained scalings counterpart of Theorem \ref{th:RDTinterval}:}
\begin{corollary}[Range dwell-time - Unconstrained scalings case]\label{th:RDTinterval_uncon}
Assume that there exist a differentiable matrix-valued function $X:[0,\Tmax ]\mapsto\mathbb{D}^n$, $X(0)\succ0$, a matrix-valued function $Y_c:[0,\Tmax ]\mapsto\mathbb{R}^{n\times q_c}$, matrices $Y_d\in\mathbb{R}^{n\times q_d}$, $U_c\in\mathbb{D}^{n}_{\succ0}$ and scalars $\eps,\alpha,\gamma>0$ such that the conditions
\begin{subequations}
\begin{alignat}{4}
            X(\tau)A-Y_c(\tau)C_{yc}+\alpha I_n&\ge0\\
            X(\tau)G_c-Y_c(\tau)H_{yc}&\ge0\\
            X(\tau)E_c-Y_c(\tau)F_c&\ge0\\
            X(0) J-Y_d C_{yd}&\ge0\\
            X(0) G_d-Y_d H_{yd}&\ge0\\
            X(0)E_d-Y_d F_d&\ge0
  \end{alignat}
\end{subequations}
           and
 \begin{subequations}
\begin{alignat}{4}
           \mathds{1}^T\left[\dot{X}(\tau)+X(\tau)(A+G_c)-Y_c(\tau)(C_{yc}+H_{yc})\right]+\mathds{1}^TM_c&\le0\\
            \mathds{1}^T\left[X(\tau)E_c-Y_c(\tau)F_c\right]-\gamma \mathds{1}^T&\le0\label{eq:th2:7}\\
            \mathds{1}^T\left[X(0)(J+G_d)-Y_d(C_{yd}+ H_{yd})-X(\theta)+\eps I\right]+\mathds{1}^TM_d&\le0\\
            \mathds{1}^T\left[X(0)E_d-Y_d F_d\right]-\gamma \mathds{1}^T&\le0
             \end{alignat}
\end{subequations}
hold for all $\tau\in[0,\Tmax ]$ and $\theta\in[\Tmin ,\Tmax ]$. Then, there exists an interval observer of the form \eqref{eq:obs}-\eqref{eq:L2} that solves Problem \ref{problem2} with the additional restriction that sequence of dwell-times belongs to $\mathcal{T}_{RDT,p}$ defined in \eqref{eq:TRDT}. Moreover, suitable observer gains are given by
\begin{equation}\label{eq:formula2}
  \tilde{L}_c(\tau)= X(\tau)^{-1}Y_c(\tau)\quad \textnormal{and}\quad L_d=X(0)^{-1}Y_d.
\end{equation}
\end{corollary}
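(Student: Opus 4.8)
The plan is to mirror the proof of Theorem~\ref{th:RDTinterval} but to substitute into the \emph{unconstrained-scalings} analysis conditions rather than the scaled ones. First I would observe that the error dynamics \eqref{eq:error} form an instance of the positive impulsive delay system \eqref{eq:posdel} under the identifications $A\mapsto A-L_c(\cdot)C_{yc}$, $G_c\mapsto G_c-L_c(\cdot)H_{yc}$, $E_c\mapsto E_c-L_c(\cdot)F_{yc}$, $C_c\mapsto M_c$, $H_c\mapsto0$, $F_c\mapsto0$ for the flow, together with the analogous constant substitutions $J\mapsto J-L_dC_{yd}$, $G_d\mapsto G_d-L_dH_{yd}$, $E_d\mapsto E_d-L_dF_{yd}$, $C_d\mapsto M_d$, $H_d\mapsto0$, $F_d\mapsto0$ for the jump. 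The observer design then becomes a stability and $L_1/\ell_1$-performance problem for this error system under range dwell-time $[\Tmin,\Tmax]$, with input $(\delta_c^\bullet,\delta_d^\bullet)$ and output $(e_c^\bullet,e_d^\bullet)$.

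Next I would introduce the linearizing change of variables $\zeta(\tau)^T:=\mathds{1}^TX(\tau)$, $Y_c(\tau):=X(\tau)\tilde L_c(\tau)$ and $Y_d:=X(0)L_d$, which is well posed because $X$ is diagonal with $X(0)\succ0$, so that $\tilde L_c(\tau)=X(\tau)^{-1}Y_c(\tau)$ and $L_d=X(0)^{-1}Y_d$ recover the gains of \eqref{eq:formula2}. Under this substitution the first block of inequalities reproduces exactly the state-positivity requirement~(a) of the observation problem: since $X(\tau)$ is diagonal and positive, $X(\tau)A-Y_c(\tau)C_{yc}=X(\tau)(A-\tilde L_c(\tau)C_{yc})$ has nonnegative off-diagonal entries if and only if $A-\tilde L_c(\tau)C_{yc}$ is Metzler, the added $\alpha I_n$ absorbing the unconstrained diagonal; the remaining inequalities translate likewise into nonnegativity of $G_c-\tilde L_c H_{yc}$, $E_c-\tilde L_c F_{yc}$, $J-L_dC_{yd}$, $G_d-L_dH_{yd}$ and $E_d-L_dF_{yd}$.

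I would then show that the second block coincides with the unconstrained-scalings range dwell-time conditions for \eqref{eq:posdel} (the corollary following Theorem~\ref{th:rdt_g}). Concretely, in the scaled conditions of Theorem~\ref{th:rdt_g} the delay-channel column forces $\mu_c^T\ge\zeta^TG_c+\mathds{1}^TH_c$, and choosing the smallest admissible value $\mu_c^T=\zeta^TG_c+\mathds{1}^TH_c$ collapses the state- and delay-channel rows into the single flow inequality $\dot\zeta^T+\zeta^T(A+G_c)+\mathds{1}^T(C_c+H_c)\le0$. Multiplying on the left by the diagonal $X(\tau)$ and using $\zeta^T=\mathds{1}^TX$ with the above substitutions turns this into $\mathds{1}^T[\dot X+X(A+G_c)-Y_c(C_{yc}+H_{yc})]+\mathds{1}^TM_c\le0$, exactly the combined continuous condition of the corollary; the output rows reproduce the $\gamma$-gain inequalities and the jump row yields the discrete condition with $J+G_d$ and $C_{yd}+H_{yd}$. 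This is the worst-case form in which the delay operator has been replaced by the identity.

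The delicate point, and the step I would treat most carefully, is the legitimacy of eliminating the timer-dependent scaling in the presence of a delay operator. By the periodicity proposition, the timer-dependent continuous scaling $\tilde S_c$ commutes with $\Delta_c$ if and only if $\tilde S_c$ is $h_c$-periodic, which holds precisely when the dwell-time sequence lies in $\mathcal{T}_{RDT,p}$ of \eqref{eq:TRDT}. Hence the reduction to unconstrained scalings is valid only under the stated restriction on the dwell-time sequence; outside $\mathcal{T}_{RDT,p}$ the free timer-dependent scaling cannot be used and the statement would fail. Assembling the positivity block, the eliminated-scaling flow and jump inequalities, and invoking the analysis corollary then produces an interval observer solving the (unconstrained-scalings) observation problem with gains \eqref{eq:formula2}, completing the argument.
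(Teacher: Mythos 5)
Your proof is correct and follows essentially the same route as the paper: the linearizing change of variables $\zeta(\tau)^T=\mathds{1}^TX(\tau)$, $Y_c(\tau)=X(\tau)\tilde L_c(\tau)$, $Y_d=X(0)L_d$, the identification of the first block of inequalities with the positivity requirement on the error dynamics \eqref{eq:error}, and the identification of the second block with the unconstrained-scalings range dwell-time analysis conditions. The paper's own proof is just a two-line pointer (``same lines as Theorem \ref{th:RDTinterval}, with Theorem \ref{th:rangeDT:free} as the stability result''); your explicit elimination of $\mu_c$ via its smallest admissible value and your appeal to the periodicity proposition to explain why the dwell-time sequence must lie in $\mathcal{T}_{RDT,p}$ of \eqref{eq:TRDT} simply spell out details the paper leaves implicit.
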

\blue{\begin{proof}
The proof follows from the same lines at the proof of Theorem \ref{th:RDTinterval} with the difference that Theorem \ref{th:rangeDT:free} is considered as the stability result.
\end{proof}}

\subsection{Minimum dwell-time result}

In the minimum dwell-time case, the time-varying gain  $L_c$ is defined as follows
\begin{equation}\label{eq:L2}
  L_c(t)=\left\{\begin{array}{ll}
  \tilde{L}_c(t-t_k)& \textnormal{if }t\in(t_k,t_{k}+\tau]\\
  \tilde{L}_c(\bar T)& \textnormal{if }t\in(t_k+\bar{T},t_{k+1}]
  \end{array}\right.
\end{equation}
where $\tilde{L}_c:\mathbb{R}_{\ge0}\mapsto\mathbb{R}^{n\times q_c}$ is a function to be determined. As in the range dwell-time case, the structure is chosen to facilitate the derivation of convex design conditions. The observation problem is defined, in this case, as follows:
\begin{problem}\label{problem2}
Find an interval observer of the form \eqref{eq:obs} (i.e. a matrix-valued function $L_c(\cdot)$ of the form \eqref{eq:L2} and a matrix $L_d\in\mathbb{R}^{n\times q_d}$) such that the error dynamics \eqref{eq:error} is
  \begin{enumerate}[(a)]
    \item state-positive, that is
    \begin{itemize}
      \item $A-\tilde L_c(\tau)  C_c$ is Metzler for all $\tau\in[0,\Tmax ]$,
      \item $G_c-\tilde L_c(t)C_{yc}$ and $E_c-\tilde L_c(t) F_{yc}$ are nonnegative for all $\tau\in[0,\Tmax ]$,
      \item $J-L_dC_d$, $E_d-L_dF_d$ and $G_d-L_d H_{yd}$ are nonnegative,
    \end{itemize}
    \item asymptotically stable under minimum dwell-time $\bar T$ when $w_c\equiv0$ and $w_d\equiv0$, and
     \item the map
    \begin{equation}
      (\delta_c^\bullet,\delta_d^\bullet)\mapsto(e_c^\bullet,e_d^\bullet)
    \end{equation}
    has a hybrid $L_1/\ell_1$-gain of at most $\gamma$.
  \end{enumerate}
\end{problem}

The following result provides a sufficient condition for the solvability of Problem \ref{problem2}:
\begin{theorem}\label{th:2}\label{th:MDTinterval}
Assume that there exist a differentiable matrix-valued function $X:[0,\bar T ]\mapsto\mathbb{D}^n$, $X(0)\succ0$, a matrix-valued function $Y_c:[0,\bar T ]\mapsto\mathbb{R}^{n\times q_c}$, matrices $Y_d\in\mathbb{R}^{n\times q_d}$, $U_c\in\mathbb{D}^{n}_{\succ0}$ and scalars $\eps,\alpha,\gamma>0$ such that the conditions
\begin{subequations}\label{eq:th2a}
\begin{alignat}{4}
            X(\tau)A-Y_c(\tau)C_{yc}+\alpha I_n&\ge0\label{eq:th2:1}\\
            X(\tau)G_c-Y_c(\tau)H_{yc}&\ge0\label{eq:th2:2}\\
            X(\tau)E_c-Y_c(\tau)F_c&\ge0\label{eq:th2:3}\\
            X(0) J-Y_d C_{yd}&\ge0\label{eq:th2:4}\\
            X(0) G_d-Y_d H_{yd}&\ge0\label{eq:th2:5}\\
            X(0)E_d-Y_d F_d&\ge0\label{eq:th2:6}
  \end{alignat}
\end{subequations}
           and
 \begin{subequations}\label{eq:th2b}
\begin{alignat}{4}
        \mathds{1}^T\left[X(\bar T)A-Y_c(\bar T)C_{yc}+U_c\right]+\mathds{1}^TM_c&\le0\label{eq:th2:50}\\
           \mathds{1}^T\left[X(\bar T)G_c-Y_c(\bar T)H_{yc}-U_c\right]&\le0\label{eq:th2:50}\\
            \mathds{1}^T\left[X(\bar T)E_c-Y_c(\bar T)F_c\right]-\gamma \mathds{1}^T&\le0\label{eq:th2:70}\\
           \mathds{1}^T\left[\dot{X}(\tau)+X(\tau)A-Y_c(\tau)C_{yc}+U_c\right]+\mathds{1}^TM_c&\le0\label{eq:th2:5}\\
           \mathds{1}^T\left[X(\tau)G_c-Y_c(\tau)H_{yc}-U_c\right]&\le0\label{eq:th2:5}\\
            \mathds{1}^T\left[X(\tau)E_c-Y_c(\tau)F_c\right]-\gamma \mathds{1}^T&\le0\label{eq:th2:7}\\
            \mathds{1}^T\left[X(0)(J+G_d)-Y_d(C_{yd}+H_d)-X(\bar T)+\eps I\right]+\mathds{1}^TM_d&\le0\label{eq:th2:6}\\
            \mathds{1}^T\left[X(0)E_d-Y_d F_d\right]-\gamma \mathds{1}^T&\le0\label{eq:th2:8}
             \end{alignat}
\end{subequations}
%
%
        %
hold for all $\tau\in[0,\bar T]$. Then, there exists an interval observer of the form \eqref{eq:obs}-\eqref{eq:L2} that solves Problem \ref{problem2} and suitable observer gains are given by
\begin{equation}\label{eq:formula2}
  \tilde{L}_c(\tau)= X(\tau)^{-1}Y_c(\tau)\quad \textnormal{and}\quad L_d=X(0)^{-1}Y_d.
\end{equation}
\end{theorem}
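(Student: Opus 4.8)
The plan is to follow the same route as in the proof of Theorem~\ref{th:RDTinterval}, reducing the design problem to the minimum dwell-time stability and performance criterion for impulsive positive systems with delays, Theorem~\ref{th:mdt_g}, applied to the error dynamics~\eqref{eq:error}. The key is the linearizing change of variables $\zeta(\tau)^T:=\mathds{1}^TX(\tau)$, together with $Y_c(\tau):=X(\tau)\tilde{L}_c(\tau)$ and $Y_d:=X(0)L_d$, which recovers the gains through~\eqref{eq:formula2}, and the identification of the continuous scaling as the constant diagonal matrix $U_c$ via $\mu_c(\tau)^T:=\mathds{1}^TU_c$. This constant scaling is admissible because it trivially commutes with the delay operator (the periodicity issue discussed in the scalings subsection does not arise), and the discrete scaling has already been solved out, which is why $J,G_d$ and $C_{yd},H_{yd}$ enter~\eqref{eq:th2b} in the combined forms $J+G_d$ and $C_{yd}+H_{yd}$.

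First I would establish the state-positivity requirement~(a) of Problem~\ref{problem2}. Since $X(\tau)$ is \emph{diagonal} and positive definite, left-multiplication by $X(\tau)$ preserves the sign pattern of the off-diagonal entries; hence \eqref{eq:th2:1} is equivalent to $A-\tilde{L}_c(\tau)C_{yc}$ being Metzler, the slack $\alpha I_n$ absorbing the otherwise unconstrained diagonal. In the same way, the remaining inequalities of~\eqref{eq:th2a} are equivalent to the nonnegativity of $G_c-\tilde{L}_c(\tau)H_{yc}$, $E_c-\tilde{L}_c(\tau)F_{yc}$, $J-L_dC_{yd}$, $G_d-L_dH_{yd}$ and $E_d-L_dF_{yd}$, so that~\eqref{eq:th2a} is precisely the positivity condition for the error system~\eqref{eq:error}.

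Next I would match the inequalities~\eqref{eq:th2b} with the stability and performance conditions~\eqref{eq:MinDT0}--\eqref{eq:MinDT2} of Theorem~\ref{th:mdt_g}, instantiated for the error system whose effective continuous-time data are $A-\tilde{L}_c C_{yc}$, $G_c-\tilde{L}_c H_{yc}$, $E_c-\tilde{L}_c F_{yc}$, whose jump data are $J-L_d C_{yd}$, $G_d-L_d H_{yd}$, $E_d-L_d F_{yd}$, and whose performance outputs are $M_c$ and $M_d$. Left-multiplying each block row of~\eqref{eq:MinDT0}--\eqref{eq:MinDT2} by $\mathds{1}^TX(\tau)$, respectively $\mathds{1}^TX(0)$, and substituting $\zeta^T=\mathds{1}^TX$ and $\mu_c^T=\mathds{1}^TU_c$ turns the first block column into the drift inequalities of~\eqref{eq:th2b}, the second block column into the scaling inequalities, and the third into the $L_1/\ell_1$ performance bounds~\eqref{eq:th2:70}, \eqref{eq:th2:7} and \eqref{eq:th2:8}. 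The piecewise definition of the gain in~\eqref{eq:L2}, which freezes $\tilde{L}_c$ at $\tilde{L}_c(\bar T)$ once the clock exceeds $\bar T$, is exactly the hypothesis of Theorem~\ref{th:mdt_g} that the system matrices be constant for $\tau\ge\bar T$, the conditions evaluated at $\bar T$ encoding this constant tail. Invoking Theorem~\ref{th:mdt_g} then delivers asymptotic stability under minimum dwell-time $\bar T$ and the hybrid $L_1/\ell_1$-gain bound $\gamma$, i.e. items~(b) and~(c) of Problem~\ref{problem2}, and the gains are read off from~\eqref{eq:formula2}.

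The main difficulty I anticipate is bookkeeping rather than conceptual. One must check that $X(\tau)$ remains positive definite, hence invertible, on $[0,\bar T]$ so that~\eqref{eq:formula2} is well defined; this follows from $X(0)\succ0$, continuity and the diagonal positivity enforced along the construction. The only genuinely delicate points are, first, the positivity equivalence, which rests essentially on $X$ and $U_c$ being \emph{diagonal} (this is why $X:[0,\bar T]\mapsto\mathbb{D}^n$ and $U_c\in\mathbb{D}^n_{\succ0}$ are imposed), and second, the correct distribution of the strict margins: the strict decrease demanded by~\eqref{eq:MinDT0} and~\eqref{eq:MinDT2} is supplied by the strict positivity of the scaling $U_c$ together with the $\eps I$ slack appearing inside the jump inequality, so that the non-strict form $\le0$ used in~\eqref{eq:th2b} is in fact sufficient.
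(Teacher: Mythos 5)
Your proposal is correct and is essentially the paper's own argument: the paper proves this result by the same linearizing substitutions $\zeta(\tau)^T=\mathds{1}^TX(\tau)$, $Y_c(\tau)=X(\tau)\tilde L_c(\tau)$, $Y_d=X(0)L_d$ (with the constant scaling identified as $U_c$), splitting the hypotheses into the positivity block \eqref{eq:th2a} for the error dynamics \eqref{eq:error} and the stability/performance block \eqref{eq:th2b}, and then invoking the minimum dwell-time stability result. The only difference is one of bookkeeping: the paper cites the general LFT result (Theorem \ref{th:minimumDT:general}) whereas you invoke its delay-system specialization (Theorem \ref{th:mdt_g}); since the latter is exactly the former applied to the delay structure with the discrete scaling eliminated, the two routes coincide, and yours in fact matches the combined forms $J+G_d$ and $C_{yd}+H_{yd}$ appearing in \eqref{eq:th2b} most directly.
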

\blue{\begin{proof}
The proof follows from the same lines at the proof of Theorem \ref{th:RDTinterval} with the difference that Theorem \ref{th:minimumDT:general} is considered as the stability result.
\end{proof}}
\blue{The following result is the unconstrained scalings counterpart of Theorem \ref{th:MDTinterval}:}
\begin{corollary}[Minimum dwell-time - Unconstrained scalings case]\label{th:MDTinterval_unc}
Assume that there exist a differentiable matrix-valued function $X:[0,\bar T ]\mapsto\mathbb{D}^n$, $X(0)\succ0$, a matrix-valued function $Y_c:[0,\bar T]\mapsto\mathbb{R}^{n\times q_c}$, matrices $Y_d\in\mathbb{R}^{n\times q_d}$, $U_c\in\mathbb{D}^{n}_{\succ0}$ and scalars $\eps,\alpha,\gamma>0$ such that the conditions
\begin{subequations}
\begin{alignat}{4}
            X(\tau)A-Y_c(\tau)C_{yc}+\alpha I_n&\ge0\\
            X(\tau)G_c-Y_c(\tau)H_{yc}&\ge0\\
            X(\tau)E_c-Y_c(\tau)F_c&\ge0\\
            X(0) J-Y_d C_{yd}&\ge0\\
            X(0) G_d-Y_d H_{yd}&\ge0\\
            X(0)E_d-Y_d F_d&\ge0
  \end{alignat}
\end{subequations}
           and
 \begin{subequations}
\begin{alignat}{4}
        \mathds{1}^T\left[X(\bar T)(A+G_c)-Y_c(\bar T)(C_{yc}+H_{yc})\right]+\mathds{1}^TM_c&\le0\\
            \mathds{1}^T\left[X(\bar T)E_c-Y_c(\bar T)F_c\right]-\gamma \mathds{1}^T&\le0\\
           \mathds{1}^T\left[\dot{X}(\tau)+X(\tau)(A+G_c)-Y_c(\tau)(C_{yc}+H_{yc})\right]+\mathds{1}^TM_c&\le0\\
            \mathds{1}^T\left[X(\tau)E_c-Y_c(\tau)F_c\right]-\gamma \mathds{1}^T&\le0\label{eq:th2:7}\\
            \mathds{1}^T\left[X(0)(J+G_d)-Y_d(C_{yd}+ H_{yd})-X(\bar T)+\eps I\right]+\mathds{1}^TM_d&\le0\\
            \mathds{1}^T\left[X(0)E_d-Y_d F_d\right]-\gamma \mathds{1}^T&\le0
             \end{alignat}
\end{subequations}
hold for all $\tau\in[0,\bar T]$. Then, there exists an interval observer of the form \eqref{eq:obs}-\eqref{eq:L2} that solves Problem \ref{problem2} with the additional restriction that sequence of dwell-times belongs to $\mathcal{T}_{MDT,p}$ defined in \eqref{eq:TMDT}.
Moreover, suitable observer gains are given by
\begin{equation}\label{eq:formula2}
  \tilde{L}_c(\tau)= X(\tau)^{-1}Y_c(\tau)\quad \textnormal{and}\quad L_d=X(0)^{-1}Y_d.
\end{equation}
\end{corollary}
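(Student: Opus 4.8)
The plan is to reduce the observer-synthesis conditions to the stability and performance conditions of the unconstrained-scalings minimum-dwell-time result for delay systems (the corollary immediately following Theorem~\ref{th:mdt_g}), applied to the error dynamics \eqref{eq:error}. The guiding principle is the same as in the proofs of Theorem~\ref{th:RDTinterval}, Corollary~\ref{th:RDTinterval_uncon} and Theorem~\ref{th:MDTinterval}: the decision variable $X(\tau)$ furnishes the Lyapunov certificate through the change of variables $\zeta(\tau)^T:=\mathds{1}^TX(\tau)$, while the linearizing substitutions $Y_c(\tau):=X(\tau)\tilde{L}_c(\tau)$ and $Y_d:=X(0)L_d$ turn the bilinear terms $X\tilde{L}_c$ and $XL_d$ into the affine quantities $Y_c$ and $Y_d$. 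Since $X(\tau)$ is diagonal and positive definite, these substitutions are invertible and reproduce the gain formulas \eqref{eq:formula2}.

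First I would establish the state-positivity requirement (a) of Problem~\ref{problem2}. Left-multiplying each inequality of the first block by the positive diagonal matrix $X(\tau)^{-1}$ (resp.\ $X(0)^{-1}$) preserves the entrywise sign pattern; hence the inequality $X(\tau)A-Y_c(\tau)C_{yc}+\alpha I_n\ge0$ becomes $A-\tilde{L}_c(\tau)C_{yc}+\alpha X(\tau)^{-1}\ge0$, whose positive diagonal correction frees the diagonal and forces $A-\tilde{L}_c(\tau)C_{yc}$ to be Metzler, while the remaining five inequalities are equivalent to the nonnegativity of $G_c-\tilde{L}_c(\tau)H_{yc}$, $E_c-\tilde{L}_c(\tau)F_{yc}$, $J-L_dC_{yd}$, $G_d-L_dH_{yd}$ and $E_d-L_dF_{yd}$. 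This is exactly item (a).

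Next I would match the second block with the stability and performance conditions. Substituting the error-system matrices $A\mapsto A-\tilde{L}_cC_{yc}$, $G_c\mapsto G_c-\tilde{L}_cH_{yc}$, $E_c\mapsto E_c-\tilde{L}_cF_{yc}$, $C_c\mapsto M_c$, $H_c,F_c\mapsto0$ (and analogously in the discrete part) into the delay-system unconstrained minimum-dwell-time conditions, and then using $\zeta(\tau)^T=\mathds{1}^TX(\tau)$, $\dot\zeta(\tau)^T=\mathds{1}^T\dot{X}(\tau)$ together with $\mathds{1}^TX\tilde{L}_c=\mathds{1}^TY_c$ and $\mathds{1}^TX(0)L_d=\mathds{1}^TY_d$, yields precisely the inequalities of the second block, the margin $\eps I$ and the strict positivity of $\eps,\gamma$ absorbing the role of the strict right-hand sides. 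The declared variable $U_c$ drops out entirely, which reflects that the continuous-time scaling has been eliminated as in Theorem~\ref{th:minimumDT:free}.

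The delicate and genuinely delay-specific ingredient is the legitimacy of this elimination. Removing the \emph{timer-dependent} continuous-time scaling requires it to commute with the continuous delay operator, which by the Propositions of the Scalings subsection holds if and only if the glued scaling $\tilde{S}_c$ is $h_c$-periodic; this in turn forces the dwell-time sequence to be the repetition of a finite block whose lengths sum to $h_c/\alpha$ for some $\alpha\in\mathbb{Z}_{>0}$. This is exactly the admissibility restriction $\mathcal{T}_{MDT,p}$ of \eqref{eq:TMDT}, which is why the conclusion is claimed only for sequences in that set. I expect this to be the main obstacle to state cleanly, since it is the point where the infinite-dimensional delay structure constrains the admissible switching; the remaining manipulations are the routine affine substitutions already used throughout Section~\ref{sec:IIimp}.
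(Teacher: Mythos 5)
Your proposal is correct and follows essentially the same route as the paper: the paper's own proof is a one-line reference stating that the argument of Theorem~\ref{th:RDTinterval} is repeated with Theorem~\ref{th:minimumDT:free} as the underlying stability result, which is exactly your reduction (first block $\Rightarrow$ positivity of the error dynamics, second block $\Rightarrow$ the unconstrained-scaling minimum dwell-time conditions via $\zeta(\tau)^T=\mathds{1}^TX(\tau)$, $Y_c=X\tilde{L}_c$, $Y_d=X(0)L_d$, with the $\mathcal{T}_{MDT,p}$ restriction coming from the delay-operator commutation requirement). The only cosmetic difference is that you route through the delay-specialized corollary following Theorem~\ref{th:mdt_g} rather than citing Theorem~\ref{th:minimumDT:free} directly, which is the same argument (and your substitution $Y_c=X\tilde{L}_c$ is in fact the correct direction, consistent with the gain formula \eqref{eq:formula2}, where the paper's proof of Theorem~\ref{th:RDTinterval} contains an inconsequential typo).
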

\blue{\begin{proof}
The proof follows from the same lines at the proof of Theorem \ref{th:RDTinterval} with the difference that Theorem \ref{th:minimumDT:free} is considered as the stability result.
\end{proof}}

\subsection{Examples}

All the computations are performed on a computer equipped with a processor i7-5600U@2.60GHz with 16GB of RAM. The conditions are implemented using SOSTOOLS \cite{sostools3} and solved with SeDuMi \cite{Sturm:01a}.

\subsubsection{Range dwell-time}

Let us consider now the system \eqref{eq:linimpdel} with the matrices
\begin{equation}\label{eq:ex2}
\begin{array}{l}
    A=\begin{bmatrix}
    -2 & 1\\
    0 & 1
  \end{bmatrix}, G_c=\begin{bmatrix}
    0.5 & 0.1\\0 & 0.1
      \end{bmatrix}, E_c=\begin{bmatrix}
    0.1\\
    0.1
  \end{bmatrix},\\ J=\begin{bmatrix}
1.1 & 0\\
0 & 0.1
  \end{bmatrix},  G_d=\begin{bmatrix}
    0.1 & 0\\0 & 0.1
  \end{bmatrix}, E_d=\begin{bmatrix}
    0.3\\
    0.3
  \end{bmatrix},\\
    C_{yc}=C_{yd}=\begin{bmatrix}
    0 & 1
  \end{bmatrix}, H_{yc}=H_{yd}=\begin{bmatrix}
    0 & 0
  \end{bmatrix}, F_{yc}=F_{yd}=0.1.
\end{array}
\end{equation}
Define also $w_c(t)=4\sin(t)$, $w^-(t)=-4$, $w^+(t)=4$, $w_d(k)$ is a stationary random process that follows the uniform distribution $\mathcal{U}(-1,1)$, $w_d^-=-1$ and $w_d^+=1$. Using polynomials of degree 4 and a constant scaling $\mu_c$ together with $T_{min}=0.3$ and $T_{max}=0.5$ in Theorem \ref{th:RDTinterval} yields the minimum $\gamma=2.33$. The optimization problem has 478 and 136 primal and dual variables, and is solved in 3.28 seconds. The computed gains are
\begin{equation}\label{eq:Lex1b}
  L_d=\begin{bmatrix}
     0\\
     0.1
  \end{bmatrix}\quad \textnormal{and}\quad\tilde L_c(\tau)=\begin{bmatrix}
    \dfrac{0.7545\tau^4   -2.8460 \tau^3+   9.4306\tau^2+    7.2310\tau+    7.7262}{0.2455 \tau^4  -3.3206\tau^3+   10.0521\tau^2+    7.0819\tau+    7.7364}\\
    \dfrac{1.5068 \tau^4  -1.6153\tau^3+    0.5277\tau^2   -7.3862 \tau+   7.9794}{1.5068\tau^4   -1.6153\tau^3+    0.5277\tau^2   -7.3862 \tau+   7.9794}\\
  \end{bmatrix}.
\end{equation}
Note that the gain $L_c$ is constant and has been obtained from an approximation of the $\tau$-dependent gain which deviates from a very small amount from the above value. To illustrate this result, we generate random impulse times satisfying the range dwell-time condition and we obtain the trajectories depicted in Fig.~\ref{fig:states_rangeDTZ}. The disturbance inputs are depicted in Fig.~\ref{fig:inputs_rangeDTZ}. For simulation purposes, we set $h_c=2$ and $h_d=4$.

\begin{figure}
  \centering
  \includegraphics[width=0.75\textwidth]{./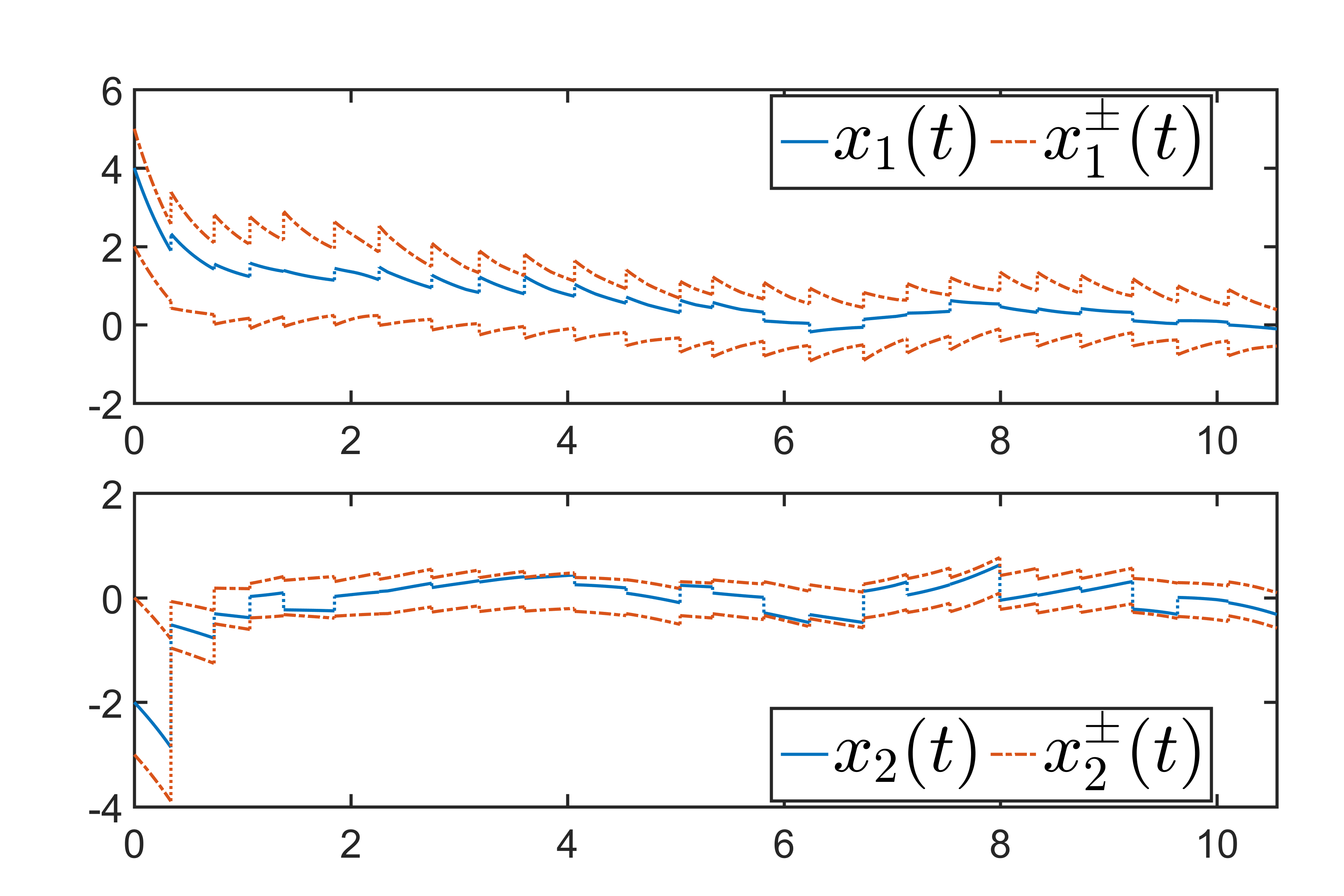}
  \caption{\textbf{Constant scaling $\mu_c$.} Trajectories of the system \eqref{eq:linimpdel}-\eqref{eq:ex2} and the interval observer \eqref{eq:obs} for some randomly chosen impulse times satisfying the range dwell-time $[0.3,\ 0.5]$.}\label{fig:states_rangeDTZ}
\end{figure}

\begin{figure}
  \centering
  \includegraphics[width=0.75\textwidth]{./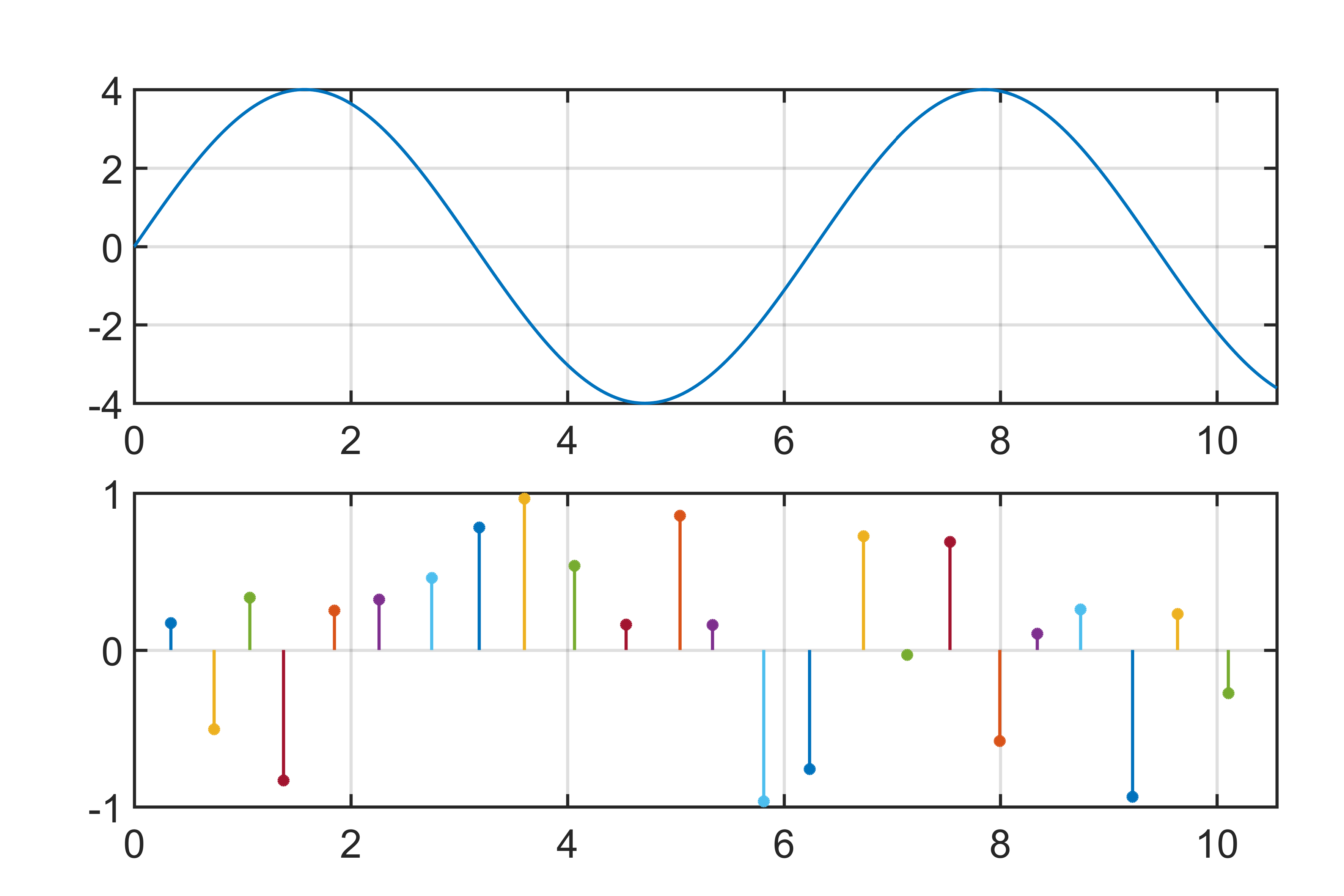}
  \caption{\textbf{Constant scaling $\mu_c$.} Trajectory of the continuous-time input $w_c$ (top) and the discrete-time input $w_d$ (bottom)}\label{fig:inputs_rangeDTZ}
\end{figure}

We now consider the Corollary \ref{th:RDTinterval_uncon} and we get the minimum value  $\gamma=1.7191$. The problem has 486 primal variables, 136 dual variables and it takes 2.90 seconds to solve. The following observer gains are obtained
\begin{equation}\label{eq:Lex1}
  L_d=\begin{bmatrix}
     0\\
     0.1
  \end{bmatrix}\quad \textnormal{and}\quad \tilde L_c(\tau)=\begin{bmatrix}
    \dfrac{ 0.6771\tau^4+0.6706\tau^3+1.2297\tau^2+1.8980\tau+1.4996}{0.3229\tau^4+0.6787\tau^3+1.3914\tau^2+1.8528\tau+1.5029}\\
    \dfrac{5.6213\tau^4-3.9871\tau^3-0.6884\tau^2-6.6036\tau+8.7950}{2.4898\tau^4   -3.3851\tau^3+    0.3116\tau^2   -6.9083\tau+    8.8177}\\
  \end{bmatrix}.
\end{equation}
Simulation results are depicted in Fig. \ref{fig:states_rangeDT} and Fig. \ref{fig:inputs_rangeDT}. Notice the periodicity of the sequence of dwell-times.
\begin{figure}
  \centering
  \includegraphics[width=0.75\textwidth]{./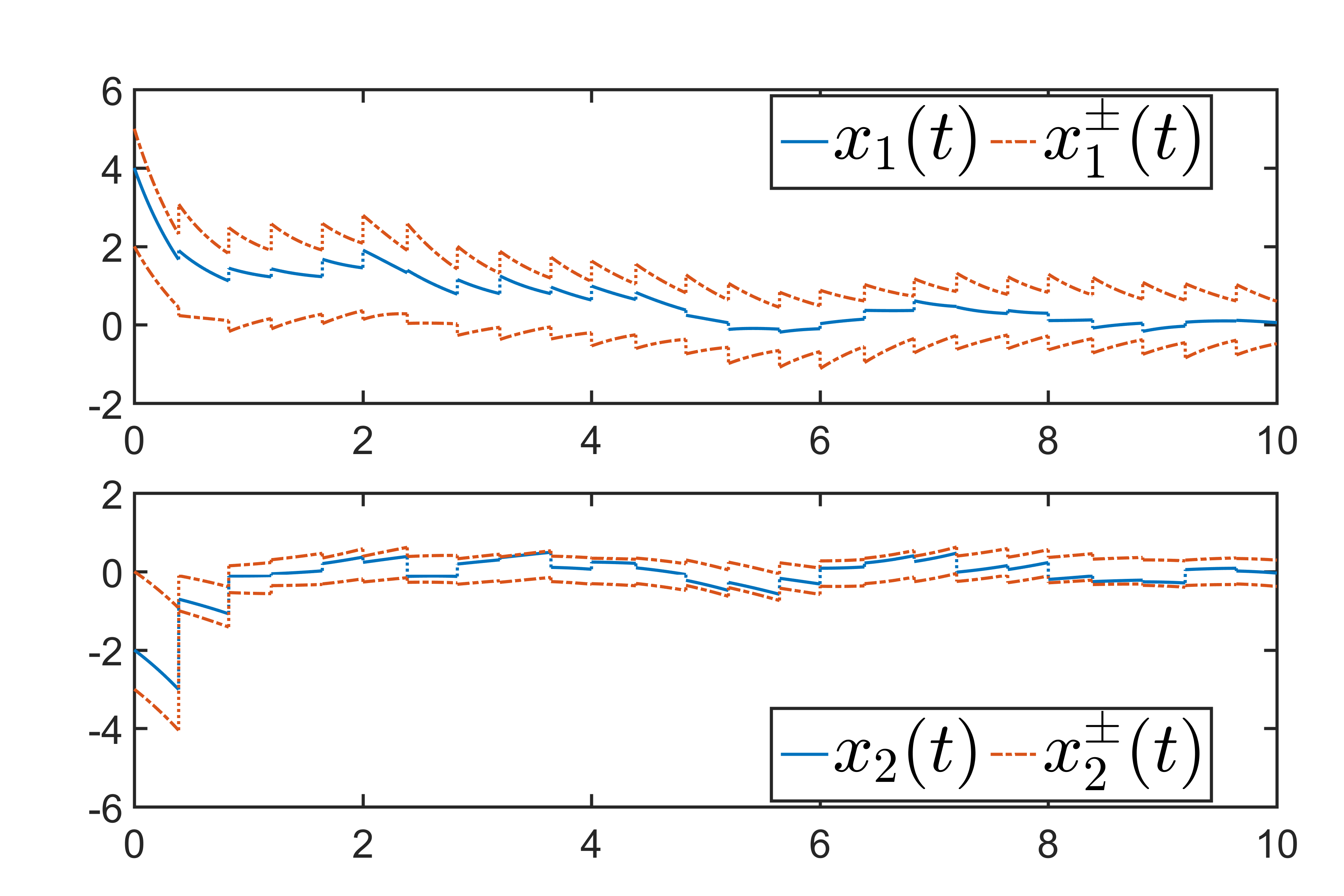}
  \caption{\textbf{Unconstrained scaling $\mu_c$.} Trajectories of the system \eqref{eq:linimpdel}-\eqref{eq:ex2} and the interval observer \eqref{eq:obs} for some randomly chosen impulse times satisfying the range dwell-time $[0.3,\ 0.5]$.}\label{fig:states_rangeDT}
\end{figure}

\begin{figure}
  \centering
  \includegraphics[width=0.75\textwidth]{./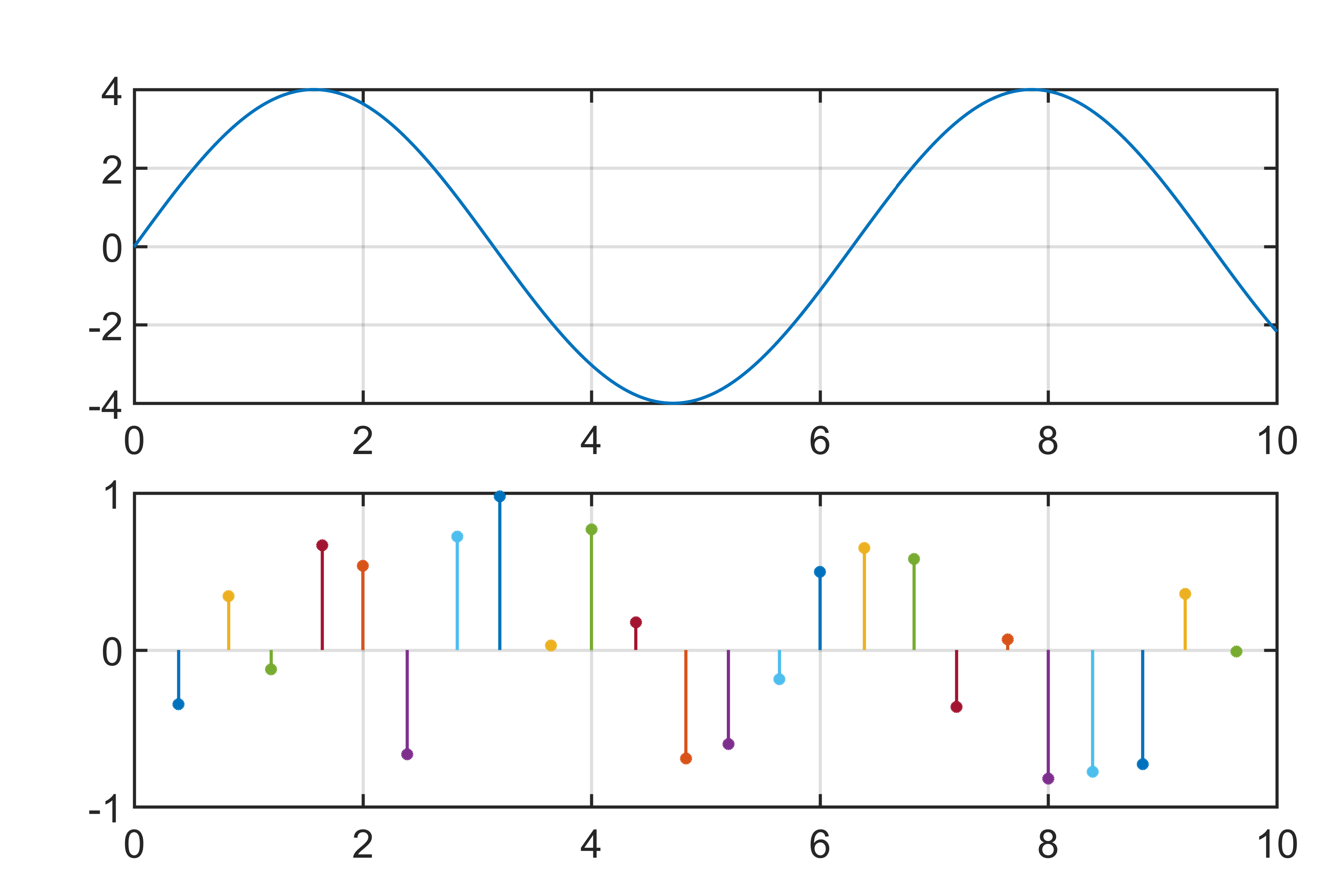}
  \caption{\textbf{Unconstrained scaling $\mu_c$.} Trajectory of the continuous-time input $w_c$ (top) and the discrete-time input $w_d$ (bottom)}\label{fig:inputs_rangeDT}
\end{figure}

\subsubsection{Minimum dwell-time}

Let us consider here the example from \cite{Briat:13d} to which we add disturbances as also done in \cite{Degue:16nolcos,Briat:18:IntImp}. We consider the system \eqref{eq:linimpdel} with the matrices:
\begin{equation}\label{eq:ex1}
\begin{array}{l}
    A=\begin{bmatrix}
    -1 & 0\\
    1 & -2
  \end{bmatrix}, G_c=\begin{bmatrix}
    0.5 & 0.1\\0 & 1
      \end{bmatrix}, E_c=\begin{bmatrix}
    0.1\\
    0.1
  \end{bmatrix},\\ J=\begin{bmatrix}
2 & 1\\
1 & 3
  \end{bmatrix}, G_d=\begin{bmatrix}
    0.1 & 0\\0 & 0.1
      \end{bmatrix}, E_d=\begin{bmatrix}
    0.3\\
    0.3
  \end{bmatrix},\\
    C_c=C_d=\begin{bmatrix}
    0 & 1
  \end{bmatrix}, H_c=H_d=\begin{bmatrix}
    0 & 0
  \end{bmatrix}, F_c=F_d=0.03.
\end{array}
\end{equation}
The disturbances and the delays are defined in the same way as in the previous example. Using a constant scaling $\mu_c$ in Theorem \ref{th:MDTinterval} with polynomials of degree 4, we get a minimum $\gamma$ of 0.19959. The computation time is 3.032 seconds and the number of primal and dual variables ia 424 and 120, respectively. The delays are $h_c=5$ and $h_d=4$, and the minimum dwell-time is set to $\bar T=1$. The simulation results are depicted in Fig.~\ref{fig:states_minDTZ} and Fig.~\ref{fig:inputs_minDTZ}.

\begin{figure}
  \centering
  \includegraphics[width=0.75\textwidth]{./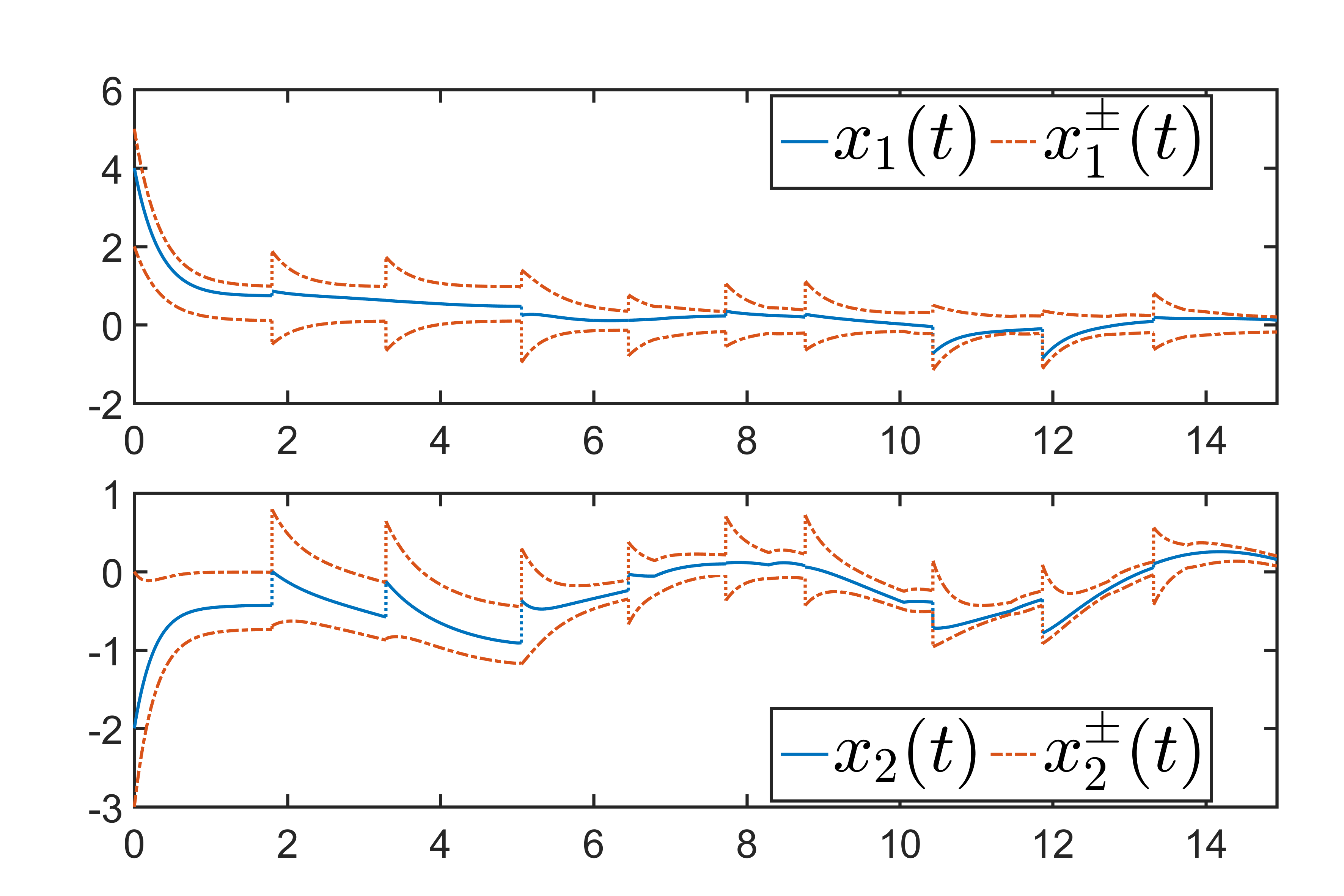}
  \caption{\textbf{Constant scaling $\mu_c$.} Trajectories of the system \eqref{eq:linimpdel}-\eqref{eq:ex1} and the interval observer \eqref{eq:obs}-\eqref{eq:L2}-\eqref{eq:Lex1} for some randomly chosen impulse times satisfying the minimum dwell-time $\bar{T}=1$.}\label{fig:states_minDTZ}
\end{figure}

\begin{figure}
  \centering
  \includegraphics[width=0.75\textwidth]{./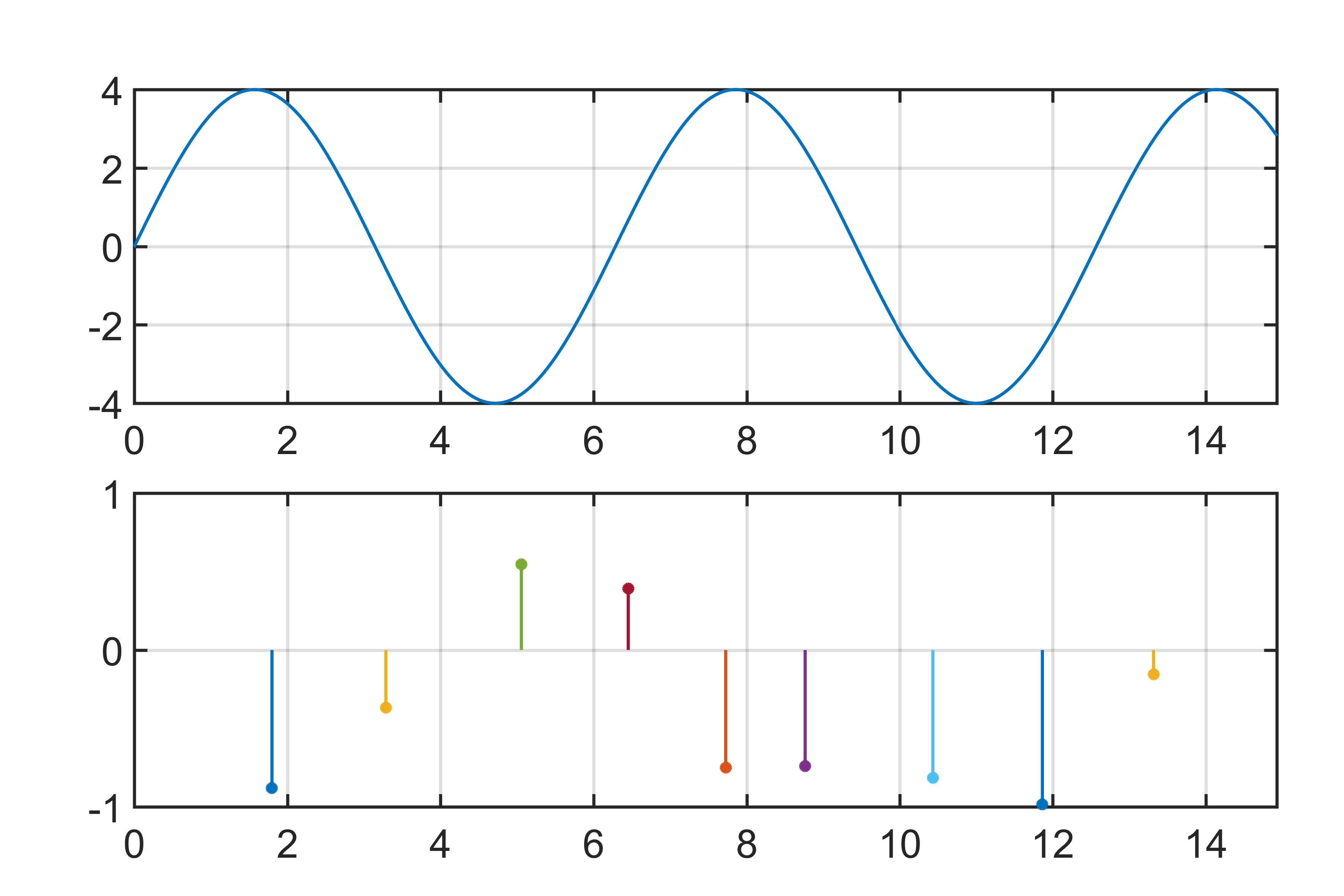}
  \caption{\textbf{Constant scaling $\mu_c$.} Trajectory of the continuous-time input $w_c$ (top) and the discrete-time input $w_d$ (bottom)}\label{fig:inputs_minDTZ}
\end{figure}

In the unconstrained scalings case, i.e. Corollary \ref{th:MDTinterval_unc}, in the same conditions as in the constant scaling case, we obtain 0.19957  for the minimum value for $\gamma$. It is interesting to note that this value is very close to the one obtained in the constant scaling case. The obtained gains are given  by
\begin{equation}\label{eq:Lex1}
  L_d=\begin{bmatrix}
   1\\
     1
  \end{bmatrix}\quad \textnormal{and}\quad \tilde L_c(\tau)=\begin{bmatrix}
    0\\
    3.3333
  \end{bmatrix}.
\end{equation}
For information, the semidefinite program has 432 primal variables, 120 dual variables and it takes 2.87 seconds to solve. Simulation results are depicted in Fig.~\ref{fig:states_minDT} and Fig.~\ref{fig:inputs_minDT}. Note the periodicity of the sequence of dwell-times.

\begin{figure}
  \centering
  \includegraphics[width=0.75\textwidth]{./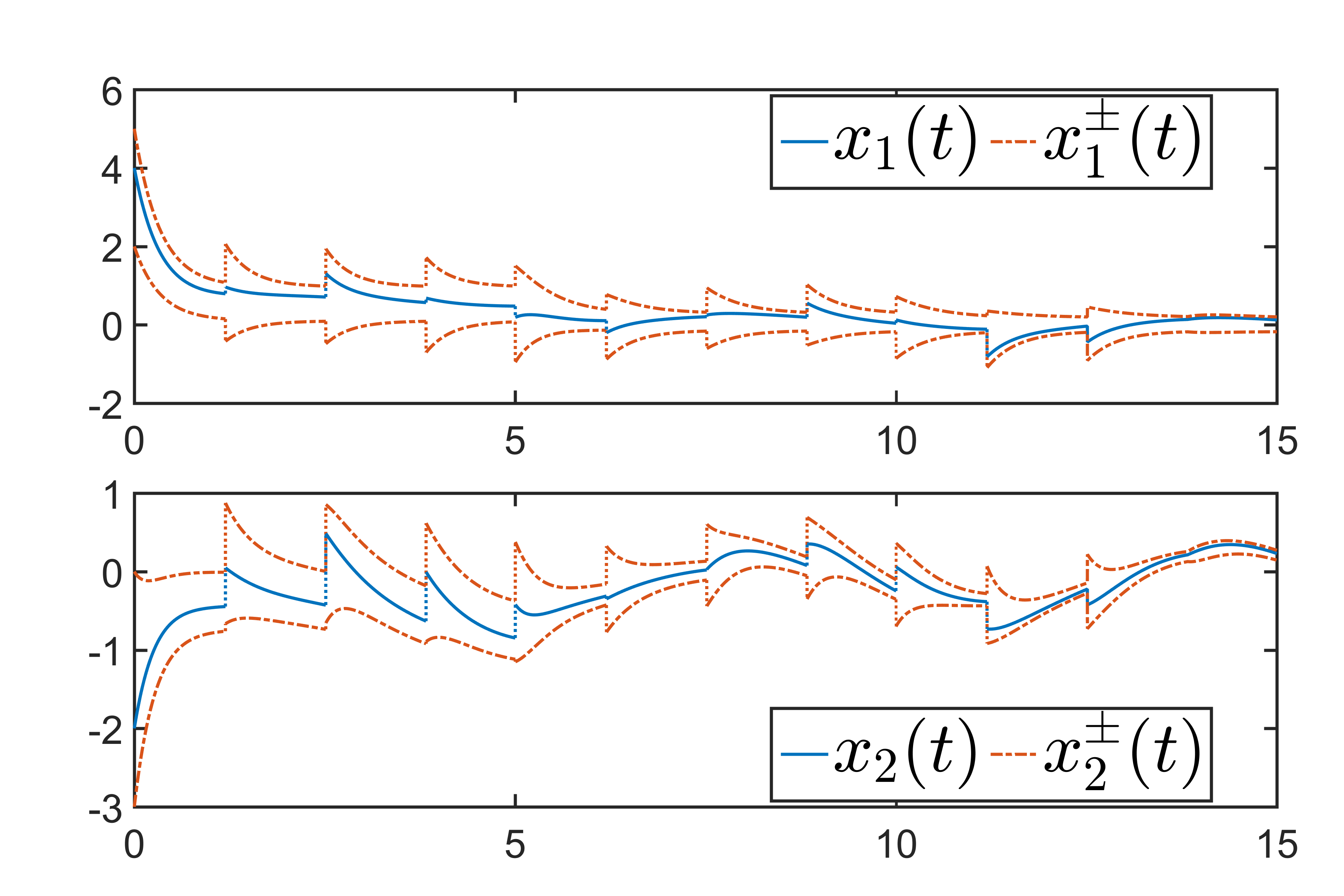}
  \caption{\textbf{Unconstrained scaling $\mu_c$.} Trajectories of the system \eqref{eq:linimpdel}-\eqref{eq:ex1} and the interval observer \eqref{eq:obs}-\eqref{eq:L2}-\eqref{eq:Lex1} for some randomly chosen impulse times satisfying the minimum dwell-time $\bar{T}=1$.}\label{fig:states_minDT}
\end{figure}

\begin{figure}
  \centering
  \includegraphics[width=0.75\textwidth]{./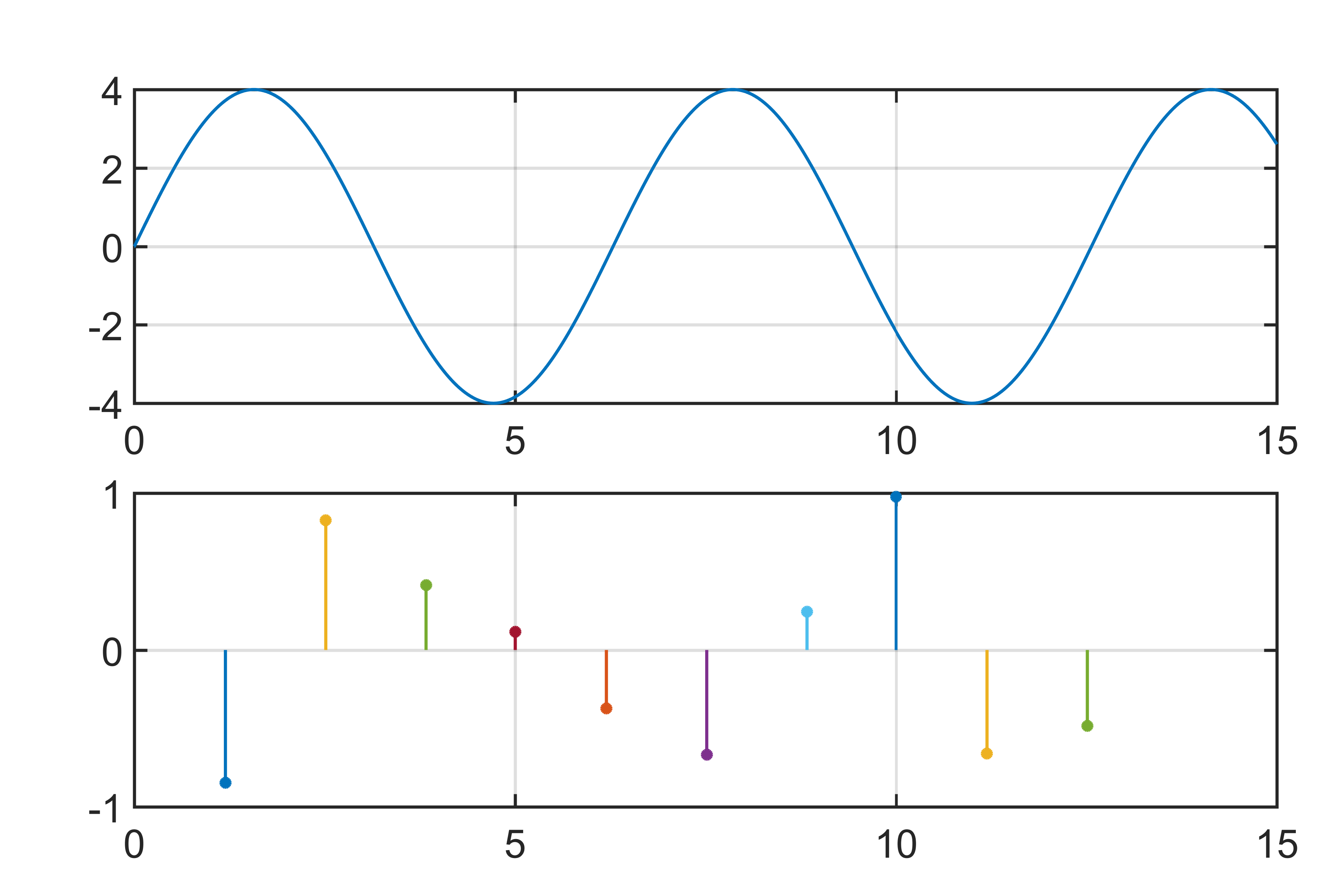}
  \caption{\textbf{Unconstrained scaling $\mu_c$.} Trajectory of the continuous-time input $w_c$ (top) and the discrete-time input $w_d$ (bottom)}\label{fig:inputs_minDT}
\end{figure}
%
%

\section{Interval observation of linear switched systems with delays}\label{sec:IIsw}

Let us consider here the switched system with delay
\begin{equation}\label{eq:switched}
\begin{array}{rcl}
    \dot{\tilde{x}}(t)&=&\tilde{A}_{\sigma(t)}\tilde{x}(t)+\tilde{G}_{\sigma(t)}x(t-h_c)+\tilde{E}_{\sigma(t)}w(t)\\
     \tilde{y}(t)&=&\tilde{C}_{\sigma(t)}\tilde{x}(t)+\tilde{H}_{\sigma(t)}x(t-h_c)+\tilde{F}_{\sigma(t)}w(t)
\end{array}
\end{equation}
where $\sigma:\mathbb{R}_{\ge0}\mapsto\{1,\ldots,N\}$ is the switching signal, $\tilde x\in\mathbb{R}^n$ is the state of the system, $\tilde{w}\in\mathbb{R}^p$ is the exogenous input and $\tilde{y}\in\mathbb{R}^p$ is the measured output. The switching signal $\sigma$ is assumed to take values in the set $\{1,\ldots,N\}$ and to change values at the times in the sequence $\{t_k\}_{k\ge1}$. This system can be rewritten into the following impulsive system with multiple jump maps as in  \cite{Briat:15i,Briat:16c}
\begin{equation}\label{eq:swimp}
\begin{array}{rcl}
    \dot{x}(t)&=&\diag_{i=1}^N(\tilde{A}_{i})x(t)+\diag_{i=1}^N(\tilde{G}_{i})x(t-h_c)+\col_{i=1}^N(\tilde{E}_{i})w(t)\\
     y(t)&=&\diag_{i=1}^N(\tilde{C}_{i})x(t)+\diag_{i=1}^N(\tilde{H}_{i})x(t-h_c)+\col_{i=1}^N(\tilde{F}_{i})w(t)\\
     x(t_k^+)&=&J_{ij}x(t_k),\ i,j=1,\ldots,N,\ i\ne j
\end{array}
\end{equation}
where $J_{ij}:=(b_ib_j^T)\otimes I_n$ and $\{b_1,\ldots,b_N\}$ is the standard basis for $\mathbb{R}^N$. It is important to stress that in the above formulation only the part of the state $x(t)$ that evolves according to the subsystem $\sigma(t)$ is meaningful. In this regard, the others can be discarded when plotting the trajectories of the switched system.

\subsection{Proposed interval observer}

Because of the particular structure of the system \eqref{eq:swimp}, we can define w.l.o.g.  an interval observer of the form
\begin{equation}\label{eq:obs2}
\begin{array}{lcl}
      \dot{x}^\bullet(t)&=&\diag_{i=1}^N(\tilde{A}_{i})x^\bullet(t)+\diag_{i=1}^N(\tilde{G}_{i})x^\bullet(t-h_c)+\col_{i=1}^N(\tilde{E}_{i})w_c^\bullet(t)\\
      &&+\diag_{i=1}^N(L_i(t))(y_c(t)-\tilde{C}_{\sigma(t)}x^\bullet(t)-\tilde{H}_{\sigma(t)}x^\bullet(t-h_c)-\col_{i=1}^N(\tilde{F}_{i})w_c^\bullet(t))\\
      x^\bullet(t_k^+)&=&J_{ij}x^\bullet(t_k),\ i,j=1,\ldots,N,\ i\ne j\\
      x^\bullet(s)&=&\phi_0^\bullet(s),s\in[-h_c,0]
\end{array}
\end{equation}
where the $L_c^i(t)$'s are the time-varying gains to design. The error dynamics is then given in this case by
\begin{equation}\label{eq:error_switched}
\begin{array}{rcl}
    \dot{e}^\bullet(t)&=&\diag_{i=1}^N(\tilde{A}_{i}-L_i(t)\tilde{C}_i)e^\bullet(t)+\diag_{i=1}^N(\tilde{G}_{i}-L_i(t)\tilde{H}_i)e^\bullet(t-h_c)\\
    &&+\col_{i=1}^N(\tilde{E}_{i}-L_i(t)\tilde{F}_i)\delta^\bullet(t)\\
   e^\bullet(t_k^+)&=&\left[(b_ib_j^T)\otimes I_n\right]e^\bullet(t_k)\\
   e_c^\bullet(t)&=&\left[I_n\otimes  M\right]e^\bullet(t)
    \end{array}
\end{equation}
where $M\in\mathbb{R}^{n\times n}_{\ge0}$ is a weighting matrix.

\subsection{Minimum dwell-time result}

As in the case of impulsive systems, we choose observer gains of the form
\begin{equation}\label{eq:L2}
  L_i(t)=\left\{\begin{array}{ll}
  \tilde{L}_i(t-t_k)& \textnormal{if }t\in(t_k,t_{k}+\tau]\\
  \tilde{L}_i(\bar T)& \textnormal{if }t\in(t_k+\bar{T},t_{k+1}]
  \end{array}\right.
\end{equation}
where the functions $\tilde{L}_i:\mathbb{R}_{\ge0}\mapsto\mathbb{R}^{n\times q_c}$ are to be determined. The observation problem is defined, in this case, as follows:
\begin{problem}\label{problem3}
Find an interval observer of the form \eqref{eq:swimp} (i.e. a matrix-valued function $L_c(\cdot)$ of the form \eqref{eq:L2}  such that the error dynamics \eqref{eq:error_switched} is
  \begin{enumerate}[(a)]
    \item state-positive, that is, for all $i=1,\ldots,N$ we have that
    \begin{itemize}
      \item $\tilde A_i-\tilde L_i(\tau) \tilde C_i$ is Metzler for all $\tau\in[0,\bar{T}]$,
      \item $\tilde E_i-\tilde L_i(\tau) \tilde F_i $ and  $\tilde G_i-\tilde L_i(\tau) \tilde H_i $ are nonnegative for all $\tau\in[0,\bar{T}]$,
    \end{itemize}
    \item asymptotically stable under minimum dwell-time $\bar T$ when $w_c\equiv0$, and
    \item the map $\delta^\bullet\mapsto e_c^\bullet$ has an $L_1$-gain of at most $\gamma$.
  \end{enumerate}
\end{problem}
The following result provides a sufficient condition for the solvability of Problem \ref{problem3}:
\begin{theorem}\label{th:3}\label{th:switched}
Assume that there exist differentiable matrix-valued functions $X_i:[0,\bar T]\mapsto\mathbb{D}^n$, $X_i(0)\succ0$, $X_i(\bar T)\succ0$, $i=1,\ldots,N$, a matrix-valued function $Y_i:[0,\bar T]\mapsto\mathbb{R}^{n\times q_c}$, $i=1,\ldots,N$,  $U\in\mathbb{D}^{n}_{\succ0}$ and scalars 
$\eps,\alpha,\gamma>0$ such that the conditions
\begin{subequations}\label{eq:th3a}
\begin{alignat}{4}
            X_i(\tau)\tilde A_i-Y_i(\tau)\tilde C_{i}+\alpha I_n&\ge0\label{eq:th3:1}\\
            X_i(\tau)\tilde G_i-Y_i(\tau)\tilde H_{i}&\ge0\label{eq:th3:2}\\
            X_i(\tau)\tilde E_i-Y_i(\tau)\tilde F_i&\ge0\label{eq:th3:3}
  \end{alignat}
\end{subequations}
           and
 \begin{subequations}\label{eq:th3b}
\begin{alignat}{4}
          \mathds{1}^T\left[X_i(\bar T)\tilde A_i-Y_i(\bar T)\tilde C_i+U\right]+\mathds{1}^TM&\le0\label{eq:th3:50}\\
           \mathds{1}^T\left[X_i(\bar T)\tilde G_i-Y_i(\bar T)\tilde H_{i}-U\right]&\le0\label{eq:th3:50}\\
            \mathds{1}^T\left[X_i(\bar T)\tilde E_i-Y_i(\bar T)\tilde F_i\right]-\gamma \mathds{1}^T&\le0\label{eq:th3:70}\\
           \mathds{1}^T\left[\dot{X}_i(\tau)+X_i(\tau)\tilde A_i-Y_i(\tau)\tilde C_{i}+U\right]+\mathds{1}^TM&\le0\label{eq:th3:5}\\
           \mathds{1}^T\left[X_i(\tau)\tilde G_i-Y_i(\tau)\tilde H_{i}-U\right]&\le0\label{eq:th3:5}\\
            \mathds{1}^T\left[X_i(\tau)\tilde E_i-Y_i(\tau)\tilde F_i\right]-\gamma \mathds{1}^T&\le0\label{eq:th3:7}\\
            \mathds{1}^T\left[X_i(0)-X_j(\bar T)+\eps I\right]&\le0\label{eq:th3:6}
             \end{alignat}
\end{subequations}
hold for all $\tau\in[0,\bar T]$ and for all $i,j=1,\ldots,N$, $i\ne j$. Then, there exists an interval observer of the form \eqref{eq:obs2} that solves Problem \ref{problem3} and suitable observer gains are given by
\begin{equation}\label{eq:formula2}
  \tilde{L}_i(\tau)= X_i(\tau)^{-1}Y_i(\tau)
\end{equation}
for all $i=1,\ldots,N$.
\end{theorem}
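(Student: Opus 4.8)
The plan is to recognize the error dynamics \eqref{eq:error_switched} as a linear positive impulsive system with delay of the form \eqref{eq:posdel}, written with the block-diagonal data $A(\tau)=\diag_i(\tilde A_i-\tilde L_i(\tau)\tilde C_i)$, $G_c(\tau)=\diag_i(\tilde G_i-\tilde L_i(\tau)\tilde H_i)$, $E_c(\tau)=\col_i(\tilde E_i-\tilde L_i(\tau)\tilde F_i)$, performance output matrix $\diag_i(M)$, and the family of relabeling jump maps $J_{ij}=(b_ib_j^T)\otimes I_n$ as in \eqref{eq:swimp}, and then to apply the minimum dwell-time delay result of Theorem \ref{th:mdt_g} with a mode- and timer-dependent storage vector and the linearizing change of variables $Y_i(\tau)=X_i(\tau)\tilde L_i(\tau)$. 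The constant scaling is taken blockwise equal to $U$, which is admissible since any constant positive diagonal commutes with the pure delay operator, so that (unlike the corollaries) no periodicity restriction on the dwell-time sequence is required. The multiple-jump-map extension of the underlying Lyapunov argument is standard; see \cite{Briat:16c}.

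First I would dispatch the positivity requirement, statement (a) of Problem \ref{problem3}. Using that each $X_i(\tau)\in\mathbb{D}^n_{\succ0}$ is a positive diagonal matrix and therefore preserves off-diagonal signs under left multiplication, the inequalities \eqref{eq:th3a} read $X_i(\tau)(\tilde A_i-\tilde L_i(\tau)\tilde C_i)+\alpha I_n\ge0$, $X_i(\tau)(\tilde G_i-\tilde L_i(\tau)\tilde H_i)\ge0$ and $X_i(\tau)(\tilde E_i-\tilde L_i(\tau)\tilde F_i)\ge0$ after the change of variables. With $\alpha>0$ the first is equivalent to $\tilde A_i-\tilde L_i(\tau)\tilde C_i$ being Metzler, while the other two are equivalent to the nonnegativity of $\tilde G_i-\tilde L_i(\tau)\tilde H_i$ and $\tilde E_i-\tilde L_i(\tau)\tilde F_i$; these are exactly the state-positivity conditions of Problem \ref{problem3}(a), required here for every mode $i$.

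Next I would match the inequalities \eqref{eq:th3b} to the stability and performance conditions of Theorem \ref{th:mdt_g} under the identification $\zeta(\tau)^T=\mathds{1}^T\diag_i(X_i(\tau))$ and $\mu_c(\tau)^T=\mathds{1}^T\diag_i(U)$. Because the disturbance and the performance output are attached only to the mode that is active at a given instant, the lumped continuous-time inequality of Theorem \ref{th:mdt_g} must be imposed blockwise, and a direct column-by-column expansion then reproduces \eqref{eq:th3:5}--\eqref{eq:th3:7} for the flow on $[0,\bar T]$ and \eqref{eq:th3:50}--\eqref{eq:th3:70} for the frozen dynamics at $\tau=\bar T$, the latter being the analogue of condition \eqref{eq:MinDT0} that covers $\tau\ge\bar T$. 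In this expansion the delay-channel row $[\,I\ -I\ 0\,]$ of \eqref{eq:MinDT1} turns the scaling $U$ into the coupling term $\mathds{1}^T U$ that appears in the $G_c$-inequalities, exactly as $U_c$ does in Theorem \ref{th:MDTinterval}, and the output matrix $\diag_i(M)$ contributes the $\mathds{1}^TM$ terms.

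The step requiring the most care, and the one I regard as the main obstacle, is the jump condition, since it is where the Kronecker structure of the relabeling maps enters. Evaluating the discrete inequality of Theorem \ref{th:mdt_g} with $J=J_{ij}$ and with no discrete delay or discrete disturbance, one computes $\zeta(0)^TJ_{ij}$, whose only nonzero block is the $j$-th, equal to $\mathds{1}^TX_i(0)$; subtracting $\zeta(\bar T)^T$ then splits the inequality into the block-$j$ relation $\mathds{1}^T[X_i(0)-X_j(\bar T)+\eps I]\le0$, which is precisely \eqref{eq:th3:6}, together with the residual relations $\mathds{1}^TX_p(\bar T)\ge\eps\mathds{1}^T$ for $p\ne j$, which are subsumed by $X_p(\bar T)\succ0$. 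Ranging over all ordered pairs $i\ne j$ reproduces \eqref{eq:th3:6} for every admissible switch. Invoking Theorem \ref{th:mdt_g} (in its multiple-jump-map form) then yields asymptotic stability under minimum dwell-time $\bar T$ for every $h_c>0$, together with the hybrid $L_1/\ell_1$-gain bound $\gamma$, and the gains $\tilde L_i(\tau)=X_i(\tau)^{-1}Y_i(\tau)$ are recovered from the change of variables, which proves the result.
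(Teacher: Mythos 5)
Your proof is correct and takes essentially the same route as the paper, whose entire proof reads ``an application of Theorem \ref{th:minimumDT:general} to the error system \eqref{eq:error_switched}'': invoking the delay specialization (Theorem \ref{th:mdt_g}) on the lifted block-diagonal data with the linearizing substitution $Y_i(\tau)=X_i(\tau)\tilde{L}_i(\tau)$ and a constant scaling built from $U$ is exactly that application. The details you supply --- positivity of the error dynamics from \eqref{eq:th3a}, the blockwise expansion of the flow inequalities, and the computation of $\zeta(0)^TJ_{ij}$ yielding \eqref{eq:th3:6} with the residual blocks absorbed by $X_p(\bar T)\succ0$ --- are precisely what the paper leaves implicit.
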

\blue{\begin{proof}
  The proof is an application of Theorem \ref{th:minimumDT:general} to the error system \eqref{eq:error_switched}.
\end{proof}}
\begin{remark}
Note that since the continuous-time scaling needs to be independent of the timer variable, then it cannot depend on the mode of the switched system.
\end{remark}
\blue{The following result is the unconstrained scalings counterpart of Theorem \ref{th:switched}:}
\begin{corollary}[Minimum dwell-time - Unconstrained scalings case]\label{cor:switched}
Assume that there exist differentiable matrix-valued functions $X_i:[0,\bar T]\mapsto\mathbb{D}^n$, $X_i(0)\succ0$, $X_i(\bar T)\succ0$, $i=1,\ldots,N$, a matrix-valued function $Y_i:[0,\bar T]\mapsto\mathbb{R}^{n\times q_c}$, $i=1,\ldots,N$,  $U_i\in\mathbb{D}^{n}_{\succ0}$, $i=1,\ldots,N$, and scalars $\eps,\alpha,\gamma>0$ such that the conditions
\begin{subequations}
\begin{alignat}{4}
            X_i(\tau)\tilde A_i-Y_i(\tau)\tilde C_{i}+\alpha I_n&\ge0\label{eq:th3:1}\\
            X_i(\tau)\tilde G_i-Y_i(\tau)\tilde H_{i}&\ge0\label{eq:th3:2}\\
            X_i(\tau)\tilde E_i-Y_i(\tau)\tilde F_i&\ge0\label{eq:th3:3}
  \end{alignat}
\end{subequations}
           and
 \begin{subequations}
\begin{alignat}{4}
          \mathds{1}^T\left[X_i(\bar T)\tilde A_i-Y_i(\bar T)\tilde C_i+X_i(\bar T)\tilde G_i-Y_i(\bar T)\tilde H_{i}\right]+\mathds{1}^TM&\le0\\
            \mathds{1}^T\left[X_i(\bar T)\tilde E_i-Y_i(\bar T)\tilde F_i\right]-\gamma \mathds{1}^T&\le0\\
           \mathds{1}^T\left[\dot{X}_i(\tau)+X_i(\tau)\tilde A_i-Y_i(\tau)\tilde C_{i}+X_i(\tau)\tilde G_i-Y_i(\tau)\tilde H_{i}\right]+\mathds{1}^TM&\le0\label{eq:th3:5}\\
            \mathds{1}^T\left[X_i(\tau)\tilde E_i-Y_i(\tau)\tilde F_i\right]-\gamma \mathds{1}^T&\le0\\
            \mathds{1}^T\left[X_i(0)-X_j(\bar T)+\eps I\right]&\le0
             \end{alignat}
\end{subequations}
hold for all $\tau\in[0,\bar T]$ and for all $i,j=1,\ldots,N$, $i\ne j$. Then, there exists an interval observer of the form \eqref{eq:obs2} that solves Problem \ref{problem3} with the additional restriction that sequence of dwell-times belongs to
 \begin{equation}
    \mathcal{T}_{MDT,p}^\sigma:=\left\{\{T_0,T_1,\ldots\}\left|\begin{array}{c}
    T_{qi+k}=\beta_k\ge\bar T,\sigma(t_{qi+k}):=\delta_k\in\{1,\ldots,N\},\\
    k\in\{0,\ldots,q-1\},    \sum_{i=0}^{q-1}T_i=h_c/\alpha,\\
    (q,\alpha,i)\in\mathbb{Z}_{>0}^2\times\mathbb{Z}_{\ge0},h_c>0
    \end{array}\right.\right\}.
  \end{equation}
Moreover, suitable observer gains are given by
\begin{equation}
  \tilde{L}_i(\tau)= X_i(\tau)^{-1}Y_i(\tau)
\end{equation}
for all $i=1,\ldots,N$.
\end{corollary}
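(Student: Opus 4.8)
The plan is to mirror the proof of Theorem \ref{th:switched}, but to invoke the unconstrained-scalings stability result Theorem \ref{th:minimumDT:free} in place of Theorem \ref{th:minimumDT:general}. First I would recast the error dynamics \eqref{eq:error_switched} as an uncertain positive impulsive system of the form \eqref{eq:mainsyst2} by treating the continuous-time delay as a norm-bounded uncertainty $\Delta_c$ with unit $L_1$-gain, exactly as in Section \ref{sec:lpsd}; concretely the flow matrix is $\diag_{i=1}^N(\tilde A_i-L_i(t)\tilde C_i)$, the delay-channel matrix is $\diag_{i=1}^N(\tilde G_i-L_i(t)\tilde H_i)$, the input matrix is $\col_{i=1}^N(\tilde E_i-L_i(t)\tilde F_i)$, the output matrix is $I_n\otimes M$ with the delayed output and feedthrough terms vanishing, and the jump map is $J_{ij}=(b_ib_j^T)\otimes I_n$. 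I would then introduce the changes of variable $\zeta_i(\tau)^T:=\mathds{1}^TX_i(\tau)$ and $Y_i(\tau):=X_i(\tau)\tilde L_i(\tau)$, so that $\tilde L_i(\tau)=X_i(\tau)^{-1}Y_i(\tau)$ recovers the stated gain formula.

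With these substitutions, the first block of inequalities is precisely the internal-positivity requirement of statement (a) of Problem \ref{problem3}: since $X_i(\tau)$ is diagonal and positive definite, $X_i(\tau)\tilde A_i-Y_i(\tau)\tilde C_{i}+\alpha I_n\ge0$ is equivalent to $\tilde A_i-\tilde L_i(\tau)\tilde C_i$ being Metzler, while the remaining two inequalities enforce nonnegativity of $\tilde G_i-\tilde L_i(\tau)\tilde H_i$ and $\tilde E_i-\tilde L_i(\tau)\tilde F_i$. Next I would apply the unconstrained-scalings elimination of Theorem \ref{th:minimumDT:free}: because the delay uncertainty channel has $H_{c,\Delta}=0$, the worst-case operator $\Delta_c=I$ collapses the system onto its zero-delay counterpart, merging the flow and delay channels into $\tilde A_i+\tilde G_i$ and the output channels into $\tilde C_i+\tilde H_i$. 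This is exactly how the pairs $X_i(\tau)\tilde A_i-Y_i(\tau)\tilde C_i$ and $X_i(\tau)\tilde G_i-Y_i(\tau)\tilde H_i$ get grouped in the flow, non-increase, and $L_1/\ell_1$-gain conditions, while the block-diagonal structure of the matrices decouples the continuous-time conditions mode by mode.

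The only inequalities that couple distinct modes are the discrete-time ones. Because the jump map $J_{ij}$ transfers the active block from mode $j$ to mode $i$, the discrete-time Lyapunov condition of Theorem \ref{th:minimumDT:free}, evaluated with the timer frozen at $\bar T$ under minimum dwell-time, reduces under the change of variable to $\mathds{1}^T[X_i(0)-X_j(\bar T)+\eps I]\le0$ for all $i\ne j$, which is the last inequality of the statement. Assembling the continuous-time and jump pieces and summing the resulting dissipation inequality over the impulse sequence, as in the proof of Theorem \ref{th:rangeDT:general}, yields asymptotic stability together with the claimed hybrid $L_1/\ell_1$-gain bound $\gamma$ for the map $\delta^\bullet\mapsto e_c^\bullet$, settling statements (b) and (c) of Problem \ref{problem3}.

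The main obstacle, and the reason the dwell-time family must be restricted to $\mathcal{T}_{MDT,p}^\sigma$, is the commutation requirement $\diag(\mu_c)\in\mathcal{S}_c$ in the unconstrained case. As established earlier for the delay operator, the concatenated scaling $\tilde S_c$ commutes with $\Delta_c$ if and only if it is $h_c$-periodic, which forces the dwell-time sequence to be periodic with the lengths over one period summing to $h_c/\alpha$. Here the eliminated scaling is additionally mode-dependent, since it is recovered from $X_i$, so periodicity of $\tilde S_c$ demands periodicity of both the dwell-times and the switching signal $\sigma$; this is exactly the extra constraint $\sigma(t_{qi+k})=\delta_k$ built into $\mathcal{T}_{MDT,p}^\sigma$. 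Verifying that this joint periodicity is precisely what legitimizes the scaling elimination is the delicate step, after which the remaining algebra is a routine specialization of Theorem \ref{th:minimumDT:free}. This proves the result.
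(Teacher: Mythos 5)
Your proposal is correct and takes essentially the same route as the paper: the paper's own (one-line) argument is precisely an application of the unconstrained-scalings stability result, Theorem \ref{th:minimumDT:free}, to the error system \eqref{eq:error_switched} via the substitution $Y_i(\tau)=X_i(\tau)\tilde{L}_i(\tau)$, which is what you carry out in detail, including the collapse onto the zero-delay system since $H_{c,\Delta}=0$. Your closing discussion of why both the dwell-time sequence and the switching sequence must be periodic for the eliminated mode-dependent scaling to commute with the delay operator is exactly the content of the remark the paper places after the corollary.
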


\begin{remark}
It is interesting to note that in the case of switched systems, then both the sequence of dwell-times and the sequence of switching signal values need to satisfy the periodicity property. This adds some restrictions on the possibility of using timer-dependent continuous-time scalings.
\end{remark}

\subsection{Examples}

\subsubsection{Example 1. Toy model}

Let us consider the system \eqref{eq:switched} with the matrices.
\begin{equation}\label{eq:ex3}
\begin{array}{l}
    \tilde A_1=\begin{bmatrix}
    -1 & 0\\
    1 & -2
  \end{bmatrix}, \tilde G_1=\begin{bmatrix}
    0.1 & 0\\
    1 & 0.5
  \end{bmatrix}, \tilde E_1=\begin{bmatrix}
    0.1\\
    0.1
  \end{bmatrix},\\ \tilde A_2=\begin{bmatrix}
-1 & 1\\
1 & -6
  \end{bmatrix}, \tilde G_2=\begin{bmatrix}
    0 & 0\\
    0 & 2
  \end{bmatrix}, \tilde E_2=\begin{bmatrix}
    0.5\\
    0
  \end{bmatrix},
    \\ \tilde C_1=\tilde C_2=\begin{bmatrix}
    0 & 1
  \end{bmatrix},  \tilde H_1=\tilde H_2=\begin{bmatrix}
    0 & 0
  \end{bmatrix}, \tilde F_c=\tilde F_d=0.1.
\end{array}
\end{equation}
Solving for the conditions in Theorem \ref{th:switched} with a constant scaling $\mu_c$, polynomials of degree 4 and a minimum dwell-time equal to $\bar T=1$, we get the minimum value 1.3338 for $\gamma$. The problem solves in  7.44 seconds and the number of primal/dual variables is 789/210. The following gains are obtained.
\begin{equation}
  \tilde L_1(\tau) = \begin{bmatrix}
    0\\1
  \end{bmatrix}\ \textnormal{ and } \tilde L_2(\tau) = \begin{bmatrix}
    \dfrac{-2.1270\tau^4   -0.0797\tau^3   -1.3068\tau^2   -1.3975\tau  -27.1294}{1.7779\tau^4   -0.9203\tau^3   -0.0802\tau^2   -1.0925\tau  -31.7254}\\
    \dfrac{2.4707\tau^4+    0.7914\tau^3+    0.9126\tau^2   -1.2046\tau   -2.9700}{ 0.1266\tau^4  +  0.7598\tau^3    +0.0874 \tau^2+   0.2920\tau+    3.7814}
  \end{bmatrix}
\end{equation}
For simulation purposes, we select $h_c=5$ and $h_d=4$ and we get the results depicted in Fig.~\ref{fig:states_minDT_switchedZ} and Fig.~\ref{fig:inputs_minDT_switchedZ}.

\begin{figure}
  \centering
  \includegraphics[width=0.75\textwidth]{./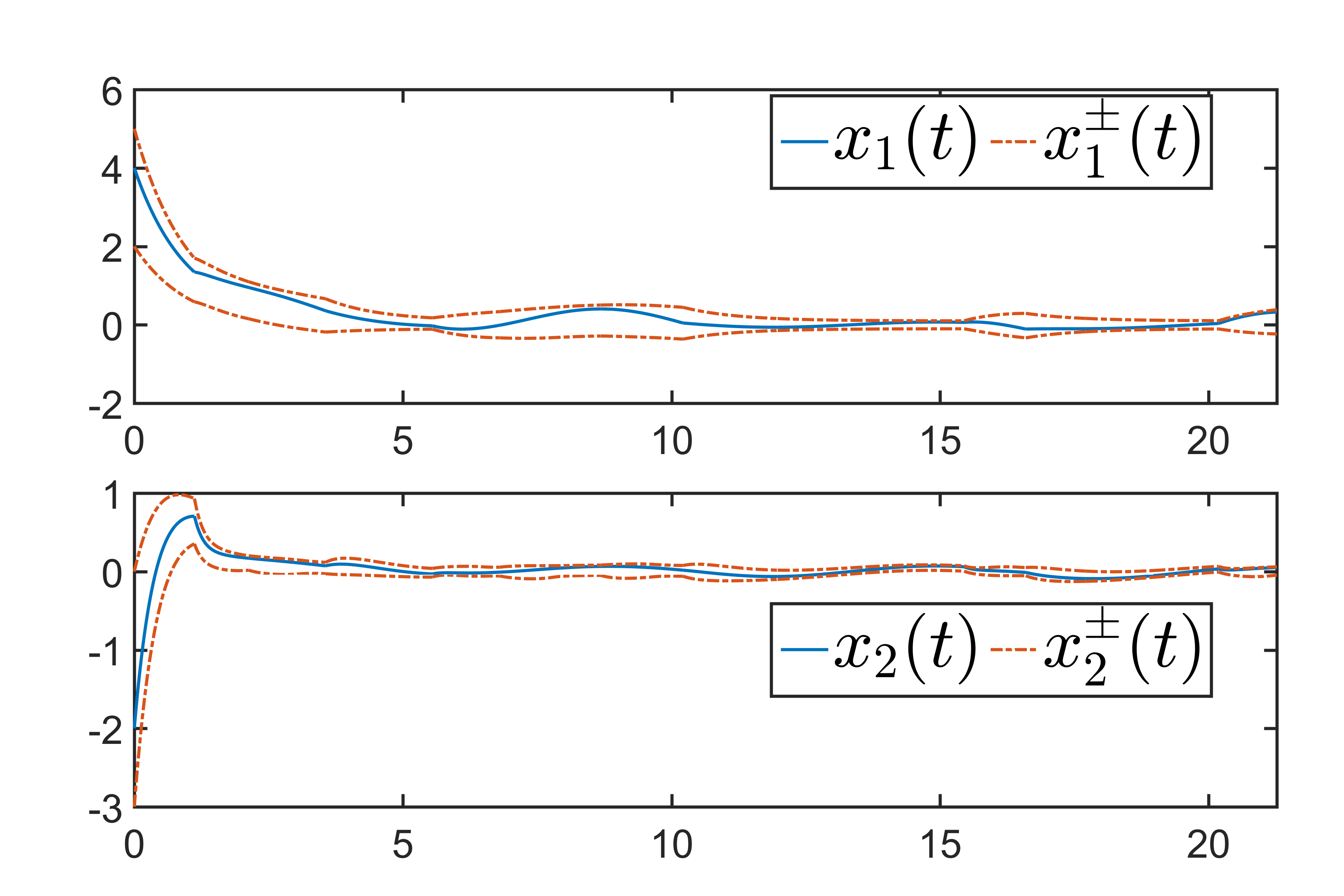}
  \caption{\textbf{Constant scaling $\mu_c$.} Trajectories of the system \eqref{eq:switched}-\eqref{eq:ex3} and the interval observer \eqref{eq:swimp} for some randomly chosen impulse times satisfying the minimum dwell-time $\bar T=1$.}\label{fig:states_minDT_switchedZ}
\end{figure}

\begin{figure}
  \centering
  \includegraphics[width=0.75\textwidth]{./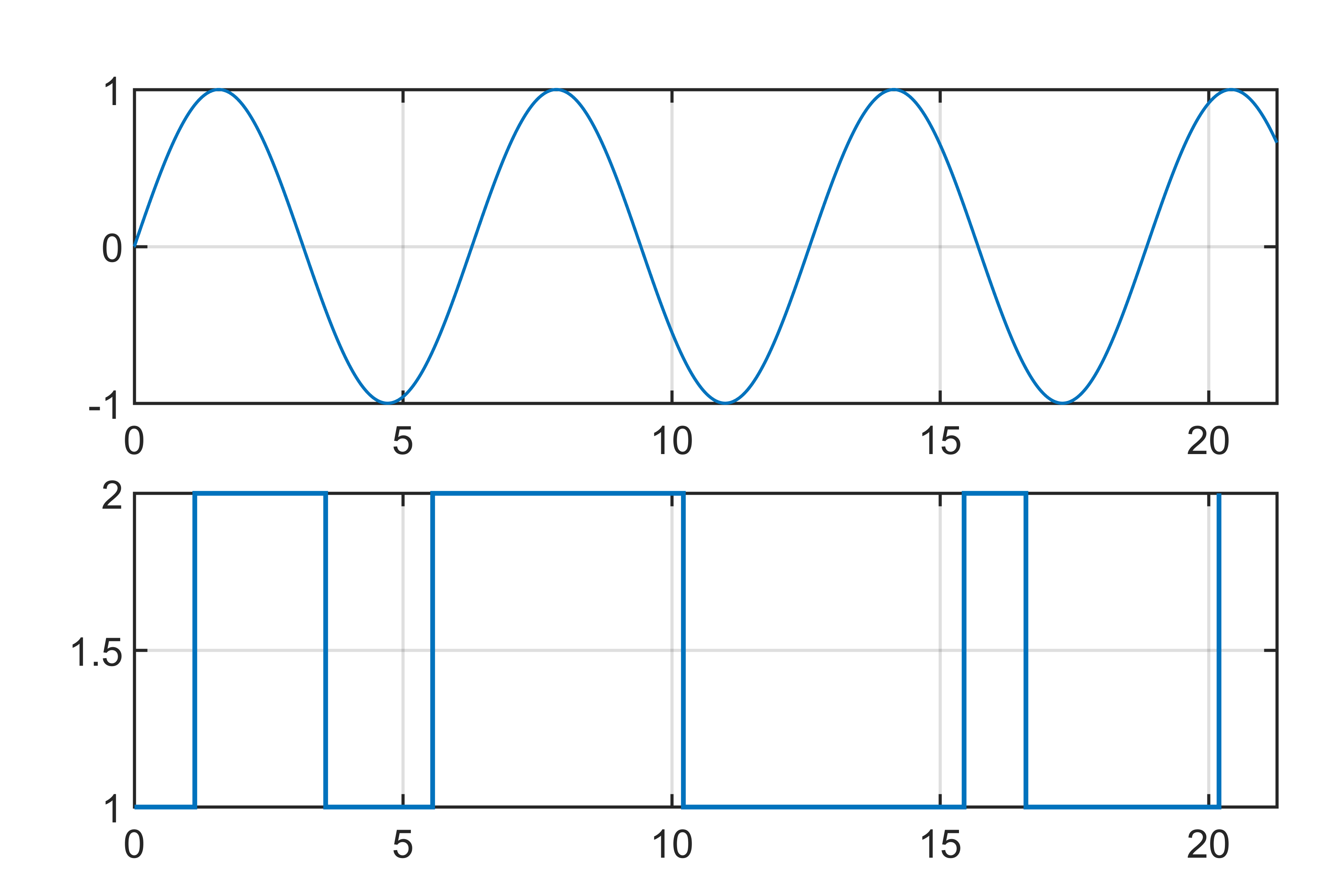}
  \caption{\textbf{Constant scaling $\mu_c$.} Trajectory of the continuous-time input $w_c$ (top) and the switching signal $\sigma$ (bottom)}\label{fig:inputs_minDT_switchedZ}
\end{figure}

We now consider the Corollary \ref{cor:switched} and we get the minimum $\gamma=0.8002$. The number of primal/dual variables is 807/210 and the problem is solved in 5.727 seconds. The observer gains are given by
\begin{equation}
  \tilde L_1(\tau) = \begin{bmatrix}
    0\\1
  \end{bmatrix}\ \textnormal{ and } \tilde L_2(\tau) = \begin{bmatrix}
    \dfrac{-2.1617\tau^4+    0.9726\tau^3+    0.4774\tau^2   -3.1312\tau  -35.0211}{ 2.5885\tau^4   -0.8470\tau^3+    0.5226\tau^2   -4.1326\tau  -36.9951}\\
    \dfrac{-15.8770\tau^4  -1.6704\tau^3    0.7153\tau^2   11.1118\tau+    5.7205}{2.9314\tau^4   -0.5488\tau^3   -0.2847\tau^2   -4.2896  \tau -15.2677}\\
  \end{bmatrix}
\end{equation}
The trajectories of the system and the interval observer are depicted in Fig.~\ref{fig:states_minDT_switched}. The disturbance input and the switching signal are depicted in Fig.~\ref{fig:inputs_minDT_switched}.

\begin{figure}
  \centering
  \includegraphics[width=0.75\textwidth]{./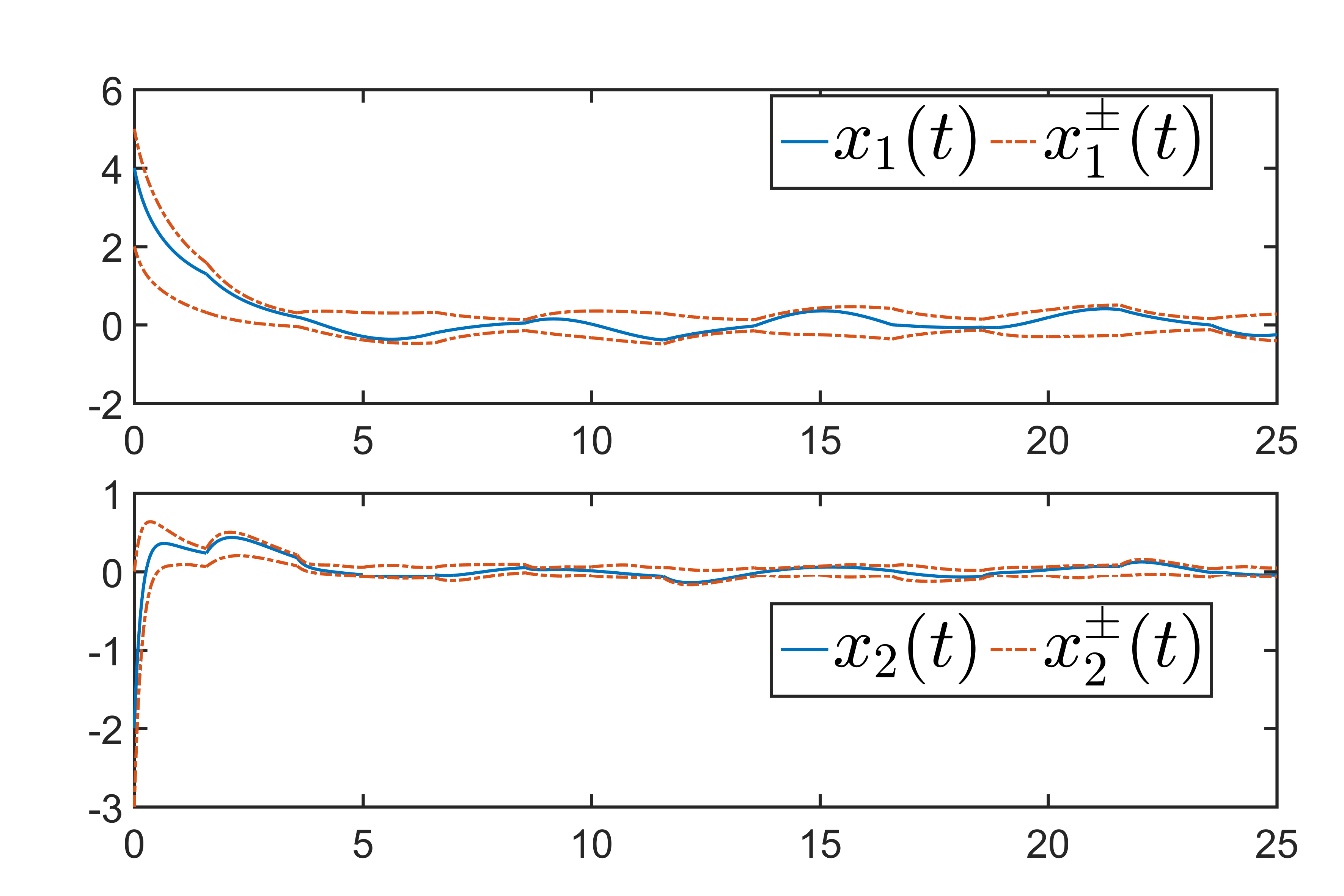}
  \caption{\textbf{Unconstrained scaling $\mu_c$.} Trajectories of the system \eqref{eq:switched}-\eqref{eq:ex3} and the interval observer \eqref{eq:swimp} for some randomly chosen impulse times satisfying the minimum dwell-time $\bar T=1$.}\label{fig:states_minDT_switched}
\end{figure}

\begin{figure}
  \centering
  \includegraphics[width=0.75\textwidth]{./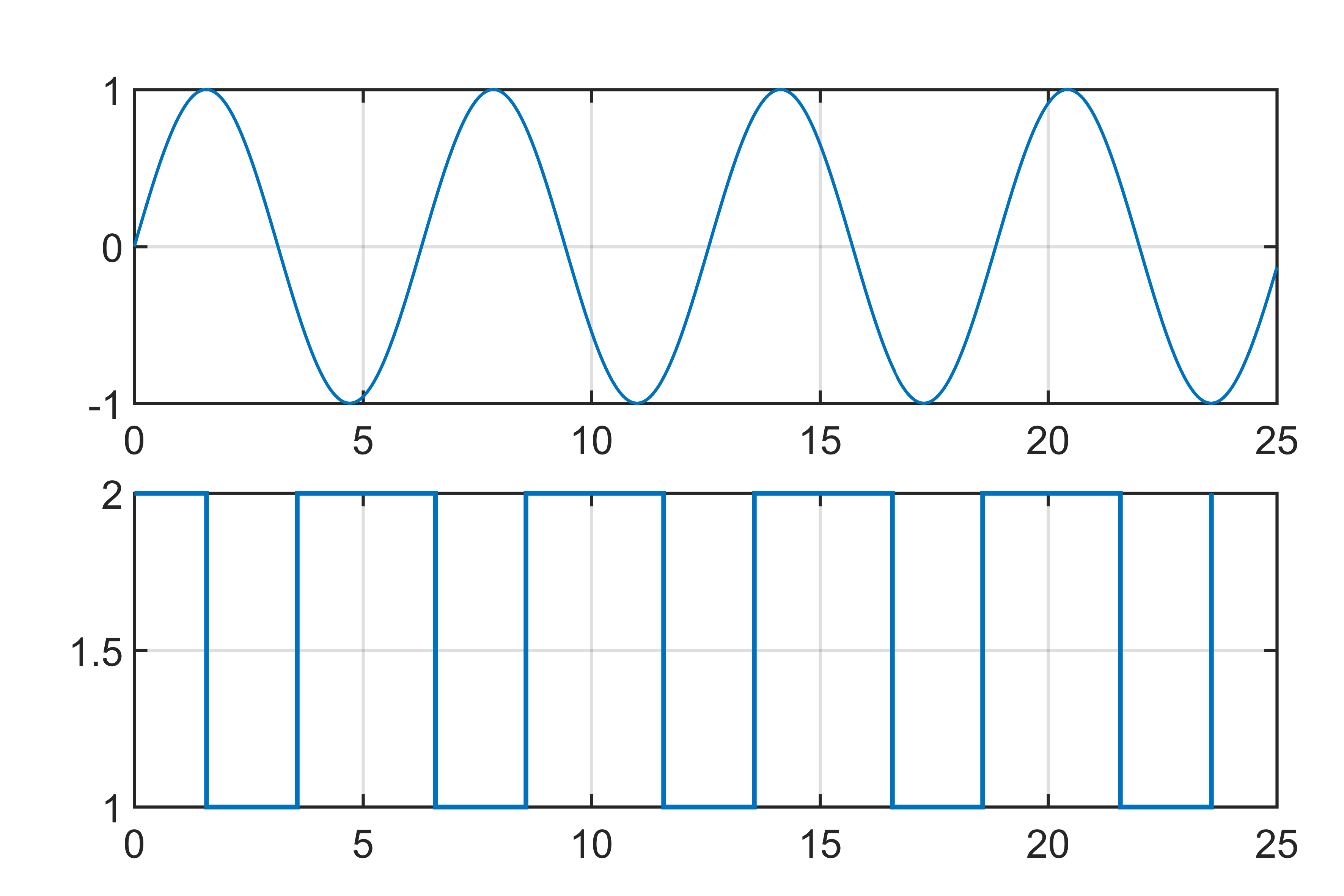}
  \caption{\textbf{Unconstrained scaling $\mu_c$.} Trajectory of the continuous-time input $w_c$ (top) and the switching signal $\sigma$ (bottom)}\label{fig:inputs_minDT_switched}
\end{figure}

\blue{ \subsubsection{Example 2. Foschini-Miljanic algorithm}

 The Foschini-Miljanic algorithm \cite{Foschini:93} is a well-known algorithm which  provides distributed on-line power control of wireless networks with user-specific Signal-to-Interference-and-Noise-Ratio (SINR) requirements. This algorithm notably yields the minimum transmitter powers that satisfy these requirements. It is described by the following dynamical system
 \begin{equation}
   \dot{p}_i(t)=\kappa_i\left[-p_i(t)+\gamma_i\left(\sum_{j=1,j\ne i}^n\dfrac{g_{ij}}{g_{ii}}p_j(t)+\dfrac{\nu_i}{g_{ii}}\right)\right]
 \end{equation}
 where $\kappa_i > 0$ denote the proportionality constants and $\gamma_i$ denote the desired SINR. The constants $g_{ij}$ and $\nu_i$ are related to interference and the thermal noise; see \cite{Foschini:93,Koskie:06} for more details. It has been recently refined in order to incorporate delays and switching topologies
 \begin{equation}
   \dfrac{\d p_i(t)}{\d t}=\kappa_i\left[-p_i(t)+\gamma_i\left(\sum_{j=1,j\ne i}^n\dfrac{g_{ij}^{\sigma(t)}}{g_{ii}^{\sigma(t)}}p_j(t-\tau_j(t))+\dfrac{\nu_i^{\sigma(t)}}{g_{ii}^{\sigma(t)}}\right)\right]
 \end{equation}
 where the $\tau_i$'s are the time-varying delays and $\sigma$ is a switching signal that changes the communication topology; see e.g. \cite{Charalambous:08,Zappavigna:12}. The above system can be compactly rewritten as
\begin{equation}
  \dot{p}(t)=-Kp(t)+K\left(\sum_{k=1}^nB_k^{\sigma(t)}p\left(t-\tau_k(t)\right)+\eta^{\sigma(t)}\right).
\end{equation}
For simplicity, let us consider $K=I$, one single constant delay $\tau$ and 3 nodes (i.e. $n=3$). This yields
\begin{equation}
  \dot{p}(t)=-p(t)+B^{\sigma(t)}p(t-\tau)+\eta^{\sigma(t)}.
\end{equation}

 For numerical purposes, we consider the following matrices $A_1=A_2=-I$,
 \begin{equation}
   G_1=B^1=\begin{bmatrix}
     0 & 0.675 & 0.3\\
     0.375 & 0 & 0.15\\
     0.45 & 0.75 & 0
   \end{bmatrix},G_2=B^2=\begin{bmatrix}
     0 & 0.5 & 0.6\\
     0.9 & 0 & 0.1\\
     0.2 & 1.2 & 0
   \end{bmatrix}
 \end{equation}
 together with
 \begin{equation}
   E_1=E_2=I,C_1=C_2=\begin{bmatrix}
     1 & 0 & 0
   \end{bmatrix},H_1=H_2=0,F_1=F_2=0,M=I.
 \end{equation}
 In other words, we would like to estimate upper and lower bonds on the state of the system by just measuring the state of the first note. Solving for the conditions of Theorem \ref{th:switched} with a constant scaling $\mu_c$, polynomials of degree 2 and a minimum dwell-time equal to $\bar T=0.2$, we get the minimum value 3.074 for $\gamma$. The problem solves in  13.2 seconds and the number of primal/dual variables is 956/394. The following gains are obtained\footnote{We add here extra conditions to constrain the values of the entries of the gains to lie within the interval $[-10,10]$. See \cite{Briat:11h} for more details.}
 \begin{equation}
   \tilde L_1(\tau)=\begin{bmatrix}
    10\\0\\0
   \end{bmatrix}\ \textnormal{and}\ \tilde L_2(\tau)=\begin{bmatrix}
    10\\0\\0
   \end{bmatrix}.
 \end{equation}}

\section{Conclusion and future works}

Several stability and performance analysis conditions for the stability analysis of a class of uncertain linear positive systems with impulses have been obtained for the first time using an input/output approach. Interestingly, the scalings can be made timer-dependent but the fact that impulses arrive aperiodically makes their use difficult. In most of the interesting cases, the continuous-time scalings need to be timer-independent. It is shown that in the case of timer-dependent scalings, the obtained conditions are exactly the stability conditions for the system with delays equal to 0, which is reminiscent of many existing results in the literature. The conditions for the design of interval observers can be easily extended to cope with multiple constant delays and other types of delays such as time-varying delays, distributed delays, neutral systems, etc. along the lines of the work in \cite{Briat:16b}. The consideration of other performance measures such as the $L_\infty$-gain is also of crucial interest here as this would allow for the derivation of results for systems with time-varying delays which would not depend on the delay-derivative; see e.g. \cite{Briat:11j,Briat:11h,Zhu:15,Briat:book1,Briat:16b}.


\end{document}